\newtheorem{theorem}{Theorem}[section]
\newtheorem{corollary}[theorem]{Corollary}
\newtheorem{lemma}[theorem]{Lemma}
\newtheorem{proposition}[theorem]{Proposition}
\theoremstyle{definition}
\newtheorem{definition}[theorem]{Definition}
\newtheorem{remark}[theorem]{Remark}
\newtheorem{example}[theorem]{Example}
\numberwithin{equation}{section}
\newcommand{\ubar}[1]{\underaccent{\bar}{#1}}
\newcommand{\R}{\mathbb{R}}
\newcommand{\Rd}{\R^d}
\newcommand{\Om}{{\Omega}}
\newcommand{\Ob}{{\overline{\Omega}}}
\newcommand{\hk}{{H}}
\newcommand{\hf}{{\mathscr{C}}}
\newcommand{\Hf}{{\mathscr{H}_0}}
\newcommand{\Hnc}{{K}}
\newcommand{\Hs}{{\mathscr{H}}}
\newcommand{\Hc}{{\mathscr{H}_1}}
\newcommand{\Hch}{{\bar{\mathscr{H}}_1}}
\newcommand{\Hcc}{{\ubar{\mathscr{H}}_1}}
\newcommand{\HM}{{\mathrm{conv}(\Hs_{axial})}}
\newcommand{\Hax}{{\mathscr{H}_{axial}}}
\newcommand{\argu}{{\,\cdot\,}}
\newcommand{\LSdd}{{\mathscr{L}(\Sdd)}}
\newcommand{\Mes}{{\mathcal{M}}}
\newcommand{\Jlam}[1]{{J_{#1}}}
\newcommand{\Dd}{{\bigl(\D(\Rd)\bigr)^d}}
\newcommand{\MesH}{{\Mes\bigl(\Ob;\LSdd \bigr)}}
\newcommand{\MesHH}{{\Mes\bigl(\Ob;\Hs \bigr)}}
\newcommand{\MesF}{{\Mes\bigl(\Ob;\Rd \bigr)}}
\newcommand{\MesT}{{\Mes\bigl(\Ob;\Sdd \bigr)}}
\newcommand{\sig}{{\sigma}}
\newcommand{\Fl}{{F}}
\newcommand{\TAU}{{\tau}}
\newcommand{\Comp}{{\mathcal{C}}}
\newcommand{\cost}{{c}}
\newcommand{\tr}{{\mathrm{Tr}}}
\newcommand{\FMD}{{(\mathrm{FMD})}}
\newcommand{\Cmin}{{\mathcal{C}_{\mathrm{min}}}}
\newcommand{\Totc}{{C_0}}
\newcommand{\ro}{{\rho}}
\newcommand{\dro}{{\rho^0}}
\newcommand{\jh}{{\bar{j}}}
\newcommand{\jc}{{\bar{j}^{\,*}}}
\newcommand{\Prob}{{(\mathcal{P})}}
\newcommand{\relProb}{{(\overline{\mathcal{P}})}}
\newcommand{\dProb}{{(\mathcal{P}^*)}}
\newcommand{\Uc}{{\overline{\mathcal{U}}_1}}
\newcommand{\Leb}{{\mathcal{L}}}
\newcommand{\U}{{\mathcal{U}}}
\newcommand{\V}{{\mathcal{V}}}
\newcommand{\pairing}[1]{{\left \langle #1 \right \rangle}}
\newcommand{\norm}[1]{\Arrowvert #1 \Arrowvert}
\newcommand{\abs}[1]{{\left \lvert #1 \right \rvert}}
\newcommand{\eps}{\varepsilon}
\newcommand{\Rb}{\overline{\mathbb{R}}}
\newcommand{\D}{\mathcal{D}}
\newcommand{\Ha}{\mathcal{H}}
\newcommand{\DIV}{\mathrm{div}}
\newcommand{\Ker}{\mathrm{Ker}}
\newcommand{\sign}[1]{\text{sign} \,#1}
\newcommand{\mres}{\mathbin{\vrule height 1.6ex depth 0pt width
		0.13ex\vrule height 0.13ex depth 0pt width 1.3ex}}
\newcommand{\Sdd}{{\mathcal{S}^{d \times d}}}
\begin{document}

	\title[On the Free Material Design problem]{Setting the Free Material Design problem through the methods of optimal mass distribution}

	
	\author{Karol Bo{\l}botowski}

	\address{Department of Structural Mechanics and Computer Aided Engineering, Faculty of Civil Engineering, Warsaw University of Technology, 16 Armii Ludowej Street, 00-637 Warsaw;\\
	College of Inter-Faculty Individual Studies in Mathematics and Natural Sciences, University of Warsaw, 2C Stefana Banacha St., 02-097 Warsaw
	}
	\email{k.bolbotowski@il.pw.edu.pl}

	\author{Tomasz Lewi\'{n}ski}
	
	\address{Department of Structural Mechanics and Computer Aided Engineering, Faculty of Civil Engineering, Warsaw University of Technology, 16 Armii Ludowej Street, 00-637 Warsaw
	}
	\email{t.lewinski@il.pw.edu.pl}

	\subjclass[2010]{74P05, 74B99, 49N99, 46N10}
	\keywords{Free Material Design, Free Material Optimization, anisotropy design, structural topology optimization, Optimal Mass Design}
	
	\date{\today}
	
	\dedicatory{}
	
	\begin{abstract}
		The paper deals with the Free Material Design (FMD) problem aimed at constructing the least compliant structures from an elastic material the constitutive field of which play the role of the design variable in the form of a tensor valued measure $\lambda$ supported in the design domain. Point-wise the constitutive tensor is referred to a given anisotropy class $\mathscr{H}$ while the integral of a cost $c(\lambda)$ is bounded from above. The convex $p$-homogeneous elastic potential $j$ is parameterized by the constitutive tensor. The work  puts forward the existence result and shows that the original problem can be reduced to the Linear Constrained Problem (LCP) known from the theory of optimal mass distribution by G. Bouchitt\'{e} and G. Buttazzo. A theorem linking solutions of (FMD) and (LCP) allows to effectively solve the original problem. The developed theory encompasses several optimal anisotropy design problems known in the literature as well as it unlocks new optimization problems including design of structures made of a material whose elastic response is dissymmetric in tension and compression. By employing the explicitly derived optimality conditions we give several analytical examples of optimal designs.
	\end{abstract}
	
	\maketitle

\section{Introduction}

Under the term  compliance  of an elastic structure we understand the value of the elastic energy stored in the structure subjected to a given static load $F$. In the present paper we consider optimum design of the field of constitutive tensor of a prescribed class of anisotropy. The aim is to find within a domain $\Omega \subset \Rd$ a distribution of the constitutive tensor that minimizes the compliance. Our attention is focused on materials with elastic potential $j=j(\hk, \xi)$ whose arguments are: the 4th-order constitutive  positive semi-definite tensor $\hk$ of suitable symmetries, that henceforward will be shortly called a \textit{Hooke tensor}, and the 2nd-order strain tensor $\xi$, defined as the symmetric part of the gradient of the displacement vector function $u$. The Hooke tensor field, point-wise restricted to a closed convex cone $\Hs$ of our choosing, will be the design variable while imposing a bound $\Totc$ on its total cost being integral of  a norm $\cost = \cost(\hk)$. This problem will be referred to as the \textit{Free Material Design} problem (FMD) in general (also known in the literature under the name \textit{Free Material Optimization}), and as the \textit{Anisotropic Material Design} (AMD) if the anisotropy is not subject to any constraints, namely $\Hs$ is the whole set of Hooke tensors.

In the context of the linear theory of elasticity in which $j(\hk,\xi)=\frac{1}{2} \pairing{\hk\,\xi , \xi}$ and with the unit cost function $\cost(\hk)=\tr \, \hk$ the above problem in the AMD setting has been for the first time put forward in \cite{ringertz1993}, where also a direct numerical method of solving this problem has been proposed. Soon then in \cite{bendsoe1994} the AMD was formally reformulated to a form in which only one scalar variable is involved: $ m :=\tr\, \hk$. There has also been shown that the optimal tensor assumes the singular form: $\check\hk = m\, \tilde{\xi} \otimes \tilde{\xi}$ where point-wise $\tilde{\xi}$ is the normalized strain tensor. Consequently, one eigenvalue of the optimal $\check\hk$ is positive, while the other five eigenvalues vanish. Due to reduction of the number of scalar design variables from 21 (in three dimensions) to 1 an efficient numerical method could be developed, cf. Section 5 in \cite{bendsoe1994}.

The analytical method of paper \cite{bendsoe1994} has been applied in \cite{bendsoe1996} concerning minimum compliance of softening structures. This time the analytical work has been done one step forward showing, at the formal level, how to eliminate the design variable $m$, but due to the necessity of using the optimization tools for smooth optimization problems this reduction had not been used in the next steps, e.g. within the numerical tools, at the cost of increasing the number of design variables. Thus, the mentioned papers: \cite{bendsoe1994},\cite{bendsoe1996} did not make use of possibility of  elimination of all the design variables in the AMD problem. Such elimination leads to the minimization problem of a functional of  linear growth with respect to the stress tensor field running over the set of all stresses satisfying the equilibrium equations. In the present paper this problem is a particular case of the more general problem $\dProb$ if one assumes $\dro$ to be the Euclidean norm on the space of matrices, cf. \eqref{eq:dProb_intro} below. 

To the present authors' knowledge one of the first contributions that puts the AMD problem in rigorous mathematical frames is \cite{werner2000} where a variant of existence result is given. The author used a variational formula for the compliance thus rewriting the original problem as a min-max problem in terms of the Hooke tensor field $\hk$ and the vector displacement function $u$. In order to gain compactness in some functional space of Hooke tensor fields a uniform upper bound $\tr\, \hk(x) \leq m_{max}$ was additionally enforced in \cite{werner2000}, which allowed to establish existence of a solution in the form of a tensor-valued $L^\infty$ function $x \mapsto \check\hk(x)$. Based on a saddle-point result the author also proved that there exists a displacement function $u \in W^{1,2}(\Omega;\Rd)$ solving the linear elasticity problem in the optimally designed body characterized by $\check{\hk} \in L^\infty(\Omega;\Hs)$. Bounding point-wise the trace of Hooke tensor has an advantage of preventing the material density blowing up in the vicinity of singularity sets (e.g. the re-entrant corners of $\Omega$), which should potentially render the optimal design more practical. Contrarily, the extra constraint deprives us of some vital mathematical features: it is no longer possible to reduce the original formulation AMD to the problem $\dProb$ of minimizing the functional of linear growth. It is also notable that combining the local constraint $\tr\, \hk(x) \leq m_{\max}$ with the global one $\int_\Omega \tr\, \hk(x) \, dx \leq C_0$ must surely produce results that depend on the ratio $(m_{\max} \, \abs{\Omega})/\Totc$; in particular once it is below one the bound on the global cost is never sharp and thus may be disposed of. Furthermore, due to the uniform bound on the optimal Hooke tensor field $\check{\hk}$, we should not \textit{a priori} expect the regularity of the fields solving the corresponding elasticity problem to be higher than in the classical case: the displacement $u$ will in general lie in the Sobolev space $W^{1,2}(\Omega;\Rd)$ (discontinuities possible) and the stress tensor function $\sigma\in L^2(\Omega;\Sdd)$ may blow up to infinity. The local upper bound on the trace of Hooke tensor is also kept throughout the papers \cite{zowe1997} or \cite{kocvara2008} that concentrate rather on the numerical treatment. 

Another work that offers an existence result in a FMD-adjacent problem is \cite{haslinger2010} where authors put a special emphasis on controlling the displacement function $u$ in the optimally designed body -- therein a more general design problem is considered that includes additional constraints on both displacement $u$ and the the stress $\sigma$. In order to arrive at a well-posed problem a relaxation is proposed where, apart from initially considered uniform upper bound $\tr\, \hk(x) \leq m_{\max}$, a lower bound $\eps\, \mathrm{Id} \leq \hk(x)$ is imposed as well for some small $\eps>0$; the inequality ought to be understood in the sense of comparing the induced quadratic forms, whilst $\mathrm{Id}:\Sdd \mapsto \Sdd$ is the identity operator. 

As outlined above, the reformulation of AMD problem to the problem $\dProb$, the one of minimizing a functional of linear growth proposed first in \cite{bendsoe1996}, was not utilized in the subsequent works in years 1996-2010 keeping the uniform upper bound $\tr\, \hk(x) \leq m_{max}$ (that guaranteed compactness in $L^\infty$) and applying more direct numerical approaches. This matter was revisited in \cite{czarnecki2012} where the passage from AMD to $\dProb$ played a central role: a detailed, yet still formal, derivation of $\dProb$ via the optimality conditions is therein given. In the same work the problem $\dProb$ was treated numerically. Next, the paper \cite{czarnecki2014} formally put forward a problem dual to $\dProb$ where the virtual work of the load is maximized over displacement functions $u$ that produce strain $e(u)$ point-wise contained in a unit Euclidean ball. In the present work this dual problem may be recovered as a particular case of the problem $\Prob$ by choosing $\rho$ to be again Euclidean norm, cf. \eqref{eq:Prob_intro} below. The idea of reformulating an optimal design problem by a pair of mutually dual problems $\Prob$ and $\dProb$ was inspired by the theory of Michell structures where a pair of this form can be employed to obtain both analytical and highly accurate numerical solutions, cf. \cite{Lewinski2019} or \cite{bouchitte2008}.

As stressed above the solution to the AMD problem is highly singular: only one eigenvalue of the elastic moduli tensor turns out to be positive, the other five vanish. A way of remedying this is by imposing some additional local condition on the type of material's anisotropy: in \cite{czarnecki2015a}, \cite{czarnecki2015b} and later in \cite{czarnecki2017b} the \textit{Isotropic Material Design} problem (IMD) was proposed as another setting of the family of FMD problems. Essentially IMD problem boils down to seeking two scalar fields $K$ and $G$ being, respectively, bulk and shear moduli that fully determine the field of isotropic Hooke tensor $\hk$ for which (in 3D setting) $\tr \,\hk = 3 K + 10 G$. Analogously to the AMD setting, the IMD problem was reformulated to a pair of mutually dual problems of the form $\Prob$, $\dProb$, only this time the functions $\rho, \dro$ are not the Euclidean norms but a certain pair of mutually dual norms on the space of symmetric matrices. A similar effort was made in \cite{czubacki2015} where the \textit{Cubic Material Design} problem (CMD) was approached: the cubic symmetry was imposed on the unknown Hooke tensor field $\hk$ reducing the CMD problem to minimizing over three moduli fields as well as directions of anisotropy. Once again reformulation to a pair $\Prob$, $\dProb$ proved to be feasible with $\rho, \dro$ chosen specifically to CMD problem. Finally, along with isotropy symmetry the Poisson ratio $\nu$ may be fixed as well leading to the \textit{Young Modulus Design} problem (YMD), where only one field of Young moduli $E$ is the design variable, see \cite{czarnecki2017a}.

In summary, throughout years 2012-2017 a family of Free Material Design problems: AMD, CMD, IMD, YMD has been proposed and rewritten as pairs of mutually dual problems $\Prob$ and $\dProb$, specified for each problem via different functions $\rho, \dro$. The present contribution is aimed at mathematically rigorous unification of the theory of FMD family including showing existence results as well as the equivalence with the pair $\Prob$, $\dProb$. The latter issue excludes the possibility of imposing the uniform bound $\tr\,\hk(x) \leq m_{\max}$ hence compactness of the set of admissible Hooke tensor fields must be established in topology other than the one of $L^\infty(\Omega;\Hs)$. The global constraint $\int_\Omega \tr\,\hk\, dx \leq \Totc$ yields merely boundedness of $\hk$ in $L^1(\Omega;\Hs)$. Naturally, due to lack of reflexivity of $L^1(\Omega;\Hs)$, the compactness in in this space is impossible to obtain.

Almost in parallel to the mathematical work \cite{werner2000} on the Free Material Design problem the so-called \textit{Mass Optimization Problem} (MOP) was developed in \cite{bouchitte2001}. In MOP we seek a mass distribution, being a non-negative scalar field $m$, that minimizes the compliance. Roughly speaking MOP is equivalent to a particular case of the FMD problem with the set of admissible Hooke tensors chosen as $\Hs = \bigl\{m\, \hk_0 \, : \, m \geq 0 \bigr \}$ where $\hk_0$ is a fixed strictly positive definite Hooke tensor (once $\hk_0$ is isotropic then MOP is equivalent to YMD problem). Consequently the only constraint is global and it reads $\int_\Omega m \,dx \leq M_0$ for some $M_0>0$. Similarly as in FMD problem the compactness of the set of admissible mass fields in any Lebesgue space $L^q(\Omega)$ is beyond reach. Therefore the authors of \cite{bouchitte2001} depart from the relaxed MOP from the beginning where the design variable is a positive Radon measure supported in the closure of the design domain: $\mu \in \Mes_+(\Ob)$ represents the mass distribution whereas the constraint is simply $\int d\mu \leq M_0$. According to Lebesgue decomposition theorem $\mu = \mu_{ac} +\mu_s$ where $\mu_{ac} = m \,\Leb^d \mres \Omega$ with $m \in L^1(\Omega)$ and $\mu_s$ is the singular part of $\mu$. In contrast to the FMD problem with the uniform bound $\tr \, \hk(x) \leq m_{\max}$ assumed in works like \cite{werner2000} or \cite{haslinger2010} it may happen in MOP that the optimal $\check{m}$ blows up to infinity, which is debatable in terms of manufacturability. On the other hand, however, the optimal singular part $\check{\mu}_s$ could concentrate e.g. on some curve $C \subset \Ob$, namely $\check{\mu}_s = m_C \,\Ha^1\mres C$, which would mean that via MOP we recognize a need for one-dimensional reinforcement of a $d$-dimensional body/structure. This feature of an optimal design problem is well-established in the theory of Michell structures.

In the paper \cite{bouchitte2001} we find a rigorous reformulation of MOP to a pair of mutually dual problems $\Prob$, $\dProb$ mentioned above, which now, in virtue of the measure theoretic setting may be readily written down:
\begin{alignat}{2}
	\label{eq:Prob_intro}
	&\Prob \qquad \qquad Z &&= \sup\biggl\{ \int \pairing{u,F} \ : \ u \in C^1(\Ob;\Rd), \ \rho\bigl( e(u) \bigr) \leq 1 \ \text{ in } \Omega \biggr\} \qquad \qquad\\
	\label{eq:dProb_intro}
	&\dProb\qquad \qquad &&=\min \biggl\{ \int \dro(\TAU) \ : \ \TAU \in \Mes\bigl( \Ob;\Sdd \bigr), \ -\DIV\,  \TAU = F \biggr\}.  \qquad \qquad
\end{alignat}
It must be noted that the applied load is a vector valued measure $F \in \MesF$ which accounts for e.g. point loads, whilst the equilibrium equation $-\DIV\,  \TAU = F$ must be understood in the sense of distributions. The variable $\tau$ in $\dProb$, being a tensor valued measure, seems to play a role of the stress field yet it is not the case. In the present work $\tau$ will be referred to as the \textit{force flux}: indeed $\dProb$ resembles the problem of optimally transporting parts of the vector source $F$ to its other parts; optimal $\TAU$ may be diffused over some subdomain of non-zero Lebesgue measure or rather concentrate on some curve. Once $F$ is balanced the problem $\dProb$ attains a solution $\hat{\TAU}$ while there exists a continuous displacement function $\hat{u} \in C(\Ob;\Rd)$ that solves a version $\relProb$ of the problem $\Prob$ where the differentiability condition is relaxed. One of the main theorems in \cite{bouchitte2001} allows to recover a solution of the original MOP based on solutions $\hat{u}, \hat{\tau}$: the optimal mass reads $\check{\mu} = \frac{M_0}{Z} \dro(\hat{\TAU})$ while the displacement function $\check{u} =  \frac{Z}{M_0} \hat{u}$ and the stress function $\check{\sig} = \frac{d \hat{\TAU}}{d\check{\mu}}$ solve the underlying elasticity problem of the body given by mass $\check{\mu}$ and subject to the load $F$. It is remarkable that $\check{u}$ and $\check{\sig}$ gain extra regularity in comparison to classical elasticity: the function $\check{u}$ is differentiable $\Leb^d$-a.e. in $\Omega$ with $e(u) \in L^\infty(\Omega;\Sdd)$ (see Lemma 2.1 in \cite{bouchitte2008}) while $\check{\sig} \in L_{\check{\mu}}^\infty(\Ob;\Sdd)$ or more precisely the stress $\check{\sig}$ is uniform in the optimal body in the sense that  $\dro(\check{\sig}) \equiv \frac{Z}{M_0}\ $ $\check{\mu}$-a.e. One may think of $\check\sig$ as "micro-stress" referred to elastic medium described by $\check{\mu}$; then the resulting force flux $\hat\TAU = \check{\sig} \check{\mu}$ could be thought of as "macro-stress". Then, although $\check{\sig}$ is bounded and uniform, $\hat{\TAU}$ is in some sense "point-wise unbounded", which is difficult to put in a precise way with $\hat{\TAU} \in \Mes(\Ob;\Sdd)$ being a tensor valued measure. Similar distinction between such "micro-stress" and "macro-stress" (called therein \textit{Hemp's forces}) occurs in theory of Michell structures, see \cite{Lewinski2019}.

The theory of MOP was further developed and generalized in \cite{bouchitte2007}. The present work essentially adapts the methods in \cite{bouchitte2007} to provide a rigorous mathematical framework of the family of Free Material Design problems in the setting of the papers by Czarnecki et al (2012-2017). The choice of the class of anisotropy we are designing, i.e. whether we are in the setting of AMD, IMD etc., shall be determined by a set $\Hs$ being any closed convex cone contained in the set of Hooke tensors. The elastic potential that furnishes the constitutive law of elasticity and, at the same time, describes the dependence on the design variable shall be a two-argument real non-negative function $j:\Hs \times \Sdd \rightarrow \R$. The unit cost of the Hooke tensor at a point $\cost:\Hs \rightarrow \R$ may be picked as restriction of any norm on the space of 4th-order tensors to the set $\Hs$; the standard cost $c = \tr$ is a particular choice. The family of the FMD problems is thus parameterized by $\Hs$, $j$, $c$, which, considering assumptions (H1)-(H5) given in Section \ref{sec:elasticity_problem}, offers a wide variety of optimal design problems. In analogy to \cite{bouchitte2001} or \cite{bouchitte2007} the design variable shall be the tensor valued measure $\lambda \in \Mes(\Ob;\Hs)$ representing the Hooke tensor field; the constraint on the total cost shall read $\int c(\lambda) \leq \Totc$. The measure $\lambda$ may be decomposed to $\lambda = \hf \mu$ where $\hf \in L^\infty_\mu(\Ob;\Hs)$ with $c(\hf) = 1\ $ $\mu$-a.e.; the positive Radon measure $\mu$ again plays the role of the "mass" of the body. The objective is to minimize the elastic compliance $\Comp = \Comp(\lambda)$ under a balanced load $F \in \Mes(\Ob;\Rd)$ that is expressed variationally: $\Comp(\lambda) = \sup_{u\in C^1(\Ob;\Rd)}\bigl\{ \int \pairing{u,F} -\int j\bigl(\lambda,e(u)\bigr) \bigr\}$. The hereby proposed FMD problem falls into class of \textit{structural topology optimization} problems for it determines:
\begin{enumerate}[(i)]
	\item the topology and shape of the optimal body via the closed set $\mathrm{spt} \, \check{\mu}$ which, in general, can be strictly contained in the design domain $\Ob$ (cutting-out property of FMD);
	\item variation of dimension of the optimal structure from point to point in $\Ob$ (solids, shells or bars may appear altogether) and this information is encoded in the geometric properties of $\check{\mu}$, see paper \cite{bouchitte1997} on the space tangent to measure at a point;
	\item the anisotropy $\check\hf(x)$ at $\check{\mu}$-a.e. $x$ that may imply certain singularities of the material, e.g. in the AMD setting the solution $\check\hf$ degenerates $\check{\mu}$-a.e. to have a single positive eigenvalue, whilst for IMD problem it appears typical for the optimal material to be auxetic, see \cite{czarnecki2015b}. 
\end{enumerate}

After checking in Section \ref{sec:elasticity_problem} the well-posedness of the FMD problem by showing weak-* upper semi-continuity of the functional $\lambda \mapsto \int j\bigl(\lambda,e(u) \bigr)$, in Section \ref{sec:from_FMD_to_LCP} we move on to reduce the original problem to the pair of problems $\Prob$, $\dProb$ of identical form as in the work \cite{bouchitte2007} on MOP. One of the paramount differences between the two design problems is the following: for MOP the functions $\rho,\dro$ are data that can be inferred from the given constitutive law whereas here the gauge function $\rho$ is to be computed such that $\frac{1}{p}\bigl(\rho(\xi)\bigr)^p = \max_{\hk \in \Hs, \ \cost(\hk) \leq 1} j(\hk,\xi)$ where $p$ is the exponent of homogeneity of $j(\hk,\argu)$; then $\dro$ is defined as the polar of $\rho$. We stress that $\rho$ depends on all the parameters $\Hs, j, c$ of the family of FMD problems hence the pair $\rho,\dro$ in $\Prob,\dProb$ encodes the actual setting of the FMD problem (e.g. AMD, IMD etc.). The finite dimensional programming problem that gives the pair $\rho, \dro$ is thoroughly studied in Section \ref{sec:anisotropy_at_point}, where in particular we learn that $\frac{1}{p'}\bigl(\dro(\sig)\bigr)^{p'} = \min_{\hk \in \Hs, \ \cost(\hk) \leq 1} j^*(\hk,\sig)$. The final method of solving the FMD problem follows from Theorem  \ref{thm:FMD_LCP}: we learn that for any solutions $\hat{u}$ and $\hat{\TAU}$ of, respectively, $\relProb$ and $\dProb$ the measure $\check{\mu} = \frac{\Totc}{Z}\,\dro(\hat{\TAU})$ is an optimal mass distribution in the FMD problem while the displacement $\check{u} = \bigl(\frac{Z}{\Totc}\bigr)^{p'/p} \hat{u}$ and the stress $\check{\sig} = \frac{d\hat{\TAU}}{d\check{\mu}}$ solve the elasticity problem in the optimal structure. Finally, the optimal function of Hooke tensor $\check{\hf}$ is the one that point-wise solves the problem $j^*\bigl(\check{\hf}(x),\check{\sig}(x)\bigr)=\min_{\hk \in \Hs, \ \cost(\hk) \leq 1} j^*\bigl(\hk,\check\sig(x)\bigr)$; Lemma \ref{lem:measurable_selection} guarantees that there exists such $\check{\mu}$-measurable function $\check{\hf}$. In Section \ref{sec:optimality_conditions} we again build upon \cite{bouchitte2007} to arrive at the optimality conditions for a quadruple $(u,\mu,\sig,\hf)$ in Theorem \ref{thm:optimality_conditions}. The optimality conditions are then employed in Section \nolinebreak \ref{sec:examples} to give analytical solutions of some simple example of FMD problem in its different settings. This includes the settings of AMD and IMD, but we also propose the new \textit{Fibrous Material Design} setting (FibMD) where the set $\Hs$ is chosen as convex hull of the (non-convex) cone of uni-axial Hooke tensors $\hk =  a \ \eta \otimes \eta \otimes \eta \otimes \eta$ with $a \geq 0$ and $\eta$ being a unit vector. Remarkably, in the FibMD setting the pair $\Prob$, $\dProb$ represents exactly the Michell problem as $\rho$ turns out to be the spectral norm, see \cite{strang1983} and \cite{bouchitte2008}. The assumptions (H1)-(H5) allow to take potentials $j$ beyond the classical $j(\hk,\xi)= \frac{1}{2} \pairing{\hk\,\xi , \xi}$ and in Example \ref{ex:dissymetru_tension_compresion} we demonstrate this possibility by proposing a potential $j_\pm$ that is dissymetric for tension and compression while the dependence on $\hk$ is non-linear.

The goal of the concluding Section \ref{sec:outlook} is to show that the theory developed in this work finds its application outside elasticity. The framework of the paper \cite{bouchitte2007} is very general and it applies e.g. to Kirchhoff plates. Section \ref{sec:FMD_for_plates} offers a sketch of adaptation of the FMD theory to Kirchhoff plates, cf. the work \cite{weldeyesus2016} on numerical methods for FMD in plates and shells. Treating the FMD problem in elasticity as vectorial one (the function $u$ is vector valued) in Section \ref{sec:FMD_for_heat_cond} we outline the theory of the scalar FMD problem that, in turn, furnishes optimal field of conductivity tensor. In analogy to \cite{bouchitte2001} we recognize connection of the scalar FMD problem to the \textit{Optimal Transport Problem} (cf. \cite{villani2003topics}), which allows to characterize the optimal conductivity field by means of the optimal transportation plan.

\section{Elasticity framework}
\label{sec:elasticity_problem}

\subsection{Hooke tensor fields and constitutive law. Strain formulation of elasticity theory}
By $\Omega$ we shall understand a bounded open set with Lipschitz boundary, contained in a $d$-dimensional space $\Rd$ (in this work $d = 2$ or $d =3$). The space $\Sdd$ will consist of symmetric 2nd-order tensors representing either the stress or the strain at a point $\Ob$, being the domain of an elastic body: a plate in case of $d=2$ and a solid for $d=3$. Naturally $\Sdd$ is isomorphic to the space of symmetric $d \times d$ matrices. We will use the symbol $\pairing{\argu,\argu}$ to denote the Euclidean scalar product in any finite dimensional space.

In classical elasticity, point-wise in $\Omega$, the anisotropy of the body is characterized by a \textit{Hooke tensor}: a 4-th order tensor that enjoys certain symmetries and is positive semi-definite. In fact the set of Hooke tensors is isomorphic to the subset $\Hf = \big\{ \hk \in \LSdd  :  \hk \text{ is positive semi-definite} \bigr\}$ with $\LSdd$ standing for the space of symmetric operators from $\Sdd$ to $\Sdd$. We thus agree that henceforward we shall (slightly abusing the terminology) speak of Hooke tensors as elements of $\Hf$ being a closed convex cone in $\LSdd$. For a Hooke tensor $\hk \in \Hf$ the notions of eigenvalues $\lambda_i(\hk)$ or the trace $\tr\,\hk$ are thus well established; similarly we may speak of identity element $\mathrm{Id} \in \Hf$ or a tensor product $A \otimes A \in \Hf$ for $A \in \Sdd$. 

In the sequel we will restrict the admissible class of anisotropy by admitting Hooke tensors in a chosen subcone of $\Hf$, i.e.
\begin{equation*}
	\Hs  \text{ is an aribtrary non-trivial closed convex cone contained  in } \Hf.
\end{equation*}
We now display some cases of the cones $\Hs$ that will be of interest to us:
\begin{example}
	\label{ex:Hs_symmetry}
	The subcone $\Hs$ may be chosen so that the condition $\hk \in \Hs$ implies a certain type of anisotropy symmetry, for instance $\Hs = \Hs_{iso}$ will denote the set of isotropic Hooke tensors; we have the characterization
	\begin{equation}
		\label{eq:iso_K_G}
		\Hs_{iso} = \left\{\hk \in \Hf \, : \, \hk = d K \biggl( \frac{1}{d}\, \mathrm{I} \otimes \mathrm{I} \biggr) + 2 G\, \biggl( \mathrm{Id}- \frac{1}{d}\, \mathrm{I} \otimes \mathrm{I} \biggr), \ K,G\geq 0  \right\}
	\end{equation}
	where by $\mathrm{I} \in \Sdd$ we denote the identity matrix, while $\mathrm{Id}$ is the identity operator in $\mathscr{L}(\Sdd)$. The non-negative numbers $K, G$ are the so-called bulk and shear moduli respectively. In case of plane elasticity, i.e. $d=2$, for later purposes we give the relation between the moduli and the pair Young modulus $E$, Poisson ratio $\nu$:
	\begin{equation}
		\label{eq:Young_and_Poisson}
		E =2 \left(\frac{1}{2K}+\frac{1}{2G} \right)^{-1} =  \frac{4 K G}{K+G}, \qquad \nu = \frac{K-G}{K+G}.
	\end{equation}
	It must be stressed, however, that some symmetry classes of the Hooke tensor generate cones that are non-convex. This is the case with classes that distinguishes directions, e.g. orthotropy, cubic symmetry.	   
\end{example}

\begin{example}.
	\label{ex:Hs_Michell}
	Let us denote by $\Hs_{axial}$ the set of uni-axial Hooke tensors, i.e.
	\begin{equation*}
		\Hs_{axial} = \bigl\{ \hk \in \Hf : \hk = a \ \eta \otimes \eta \otimes \eta \otimes \eta, \ a \geq 0, \, \eta \in S^{d-1} \bigr\}.
	\end{equation*}
	where by $S^{d-1}$ we mean the unit sphere in $\Rd$. The set $\Hs_{axial}$ is clearly a cone yet it is non-convex for $d>1$ and thus a natural step is to consider the smallest closed convex cone containing $\Hs_{axial}$, i.e. its closed convex hull:
	\begin{equation*}
		\Hs = \overline{\mathrm{conv}(\Hs_{axial})} = \mathrm{conv}(\Hs_{axial}),
	\end{equation*}
	where we have used the fact that in the finite dimensional space the convex hull of a closed cone is closed.	This family of Hooke tensors relates to materials that are made of one-dimensional fibres. Although $\mathrm{conv}(\Hs_{axial})$ is properly contained in $\Hf$ it contains non-trivial isotropic Hooke tensors.

\end{example}

We proceed to narrow down the class of constitutive laws of elasticity that shall be herein considered, i.e. point-wise relation between the stress tensor $\sig \in \Sdd$ and the strain tensor $\xi \in \Sdd$. We will deal with a family of constitutive relations parametrized by a Hooke tensor $\hk \in \Hs$, therefore the elastic energy potential will depend on two variables:
\begin{equation*}
	j: \Hs \times \Sdd \rightarrow \R;
\end{equation*}
note that we assume that $j$ cannot admit infinite values. It is important that there is no explicit dependence on the spatial variable $x$. Below we state our assumptions on the function $j$. Throughout the rest of the paper we fix an exponent $p \in (1,\infty)$, while $p' = \frac{p}{p-1}$ will stand for its H\"{o}lder conjugate.

We assume that for each $\hk \in \Hs$ there hold:
\begin{enumerate}[(H1)]
	\item \label{as:convex} the function $j(\hk,\argu)$ is real-valued, non-negative and convex on $\Sdd$;
	\item \label{as:p-hom} the function $j(\hk,\argu)$ is positively $p$-homogeneous on $\Sdd$;
\end{enumerate}
whilst for each $\xi \in \Sdd$ there hold:
\begin{enumerate}[(H1)]
	\setcounter{enumi}{2}
	\item \label{as:concave} the function $j(\argu,\xi)$ is concave and upper semi-contiunous on the closed convex cone $\Hs$;
	\item \label{as:1-hom} the function $j(\argu,\xi)$ is one-homogeneous on $\Hs$;
	\item \label{as:elip} there exists $\hk \in \Hs$ such that $j(\hk,\xi) >0$.
\end{enumerate}
It is worth to stress that the condition (H\ref{as:elip}) that gives a kind of ellipticity is weak as it allows \textit{degenerate} tensors $\hk \in \Hs$ for which there exists non-zero $\xi \in \Sdd$ such that $j(\hk,\xi) =0$.

We shall say that a stress tensor $\sigma \in \Sdd$ and a strain tensor $\xi \in \Sdd$ satisfy the constitutive law of elasticity with respect to a Hooke tensor $\hk \in \Hs$ whenever
\begin{equation*}
	\sigma \in \partial j(\hk,\xi),
\end{equation*}
where we agree that henceforward  the subdifferential $\partial j(\hk,\xi)$ will be intended with respect to the second variable; similarly we shall later understand the Fenchel transform $j^*$. This way the constitutive law above may be rewritten as the equality $\pairing{\xi,\sigma} = j(\hk,\xi) + j^*(\hk,\sigma)$.
\begin{example}
	The simplest case of a function $j$ in case when $p=2$ is the one from linear elasticity:
	\begin{equation*}
		j(\hk,\xi) = \frac{1}{2} \pairing{\hk \,\xi,\xi}.
	\end{equation*}
	It is trivial to see that the assumptions (H\ref{as:convex})-(H\ref{as:1-hom}) are satisfied by the function above. The assumption (H\ref{as:elip}) is virtually put on the set $\Hs$ as it has to contain "enough" Hooke tensors.
\end{example}
Next we state several results that will be useful in terms of integral functionals with $j$ as the integrand.
\begin{proposition}
	\label{prop:Carath}
	For a given Radon measure $\mu \in \Mes_+(\Ob)$ let $\hf: \Ob \rightarrow \Hs$ be a $\mu$-measurable tensor valued function. Then the function $j\bigl(\hf(\argu),\argu\bigr) : \Ob \times \Sdd \rightarrow \R$ is a Carath\'{e}odory function, i.e.
	\begin{enumerate}[(i)]
		\item 	for $\mu$-a.e. $x\in \Ob$ the function $j\bigl(\hf(x),\argu\bigr) $ is continuous;
		\item  for every $\xi \in \Sdd$ the function $j\bigl(\hf(\argu),\xi\bigr)$ is $\mu$-measurable.
	\end{enumerate}
\end{proposition}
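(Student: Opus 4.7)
The plan is to check the two defining properties of a Carath\'{e}odory function independently, leaning directly on the pointwise structural hypotheses (H\ref{as:convex}) and (H\ref{as:concave}) on $j$; no measurable selection machinery will be needed here.

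For (i), I fix an arbitrary point $x \in \Ob$ and set $\hk = \hf(x) \in \Hs$. By hypothesis (H\ref{as:convex}) the function $j(\hk,\,\cdot\,)$ is convex and finite on the whole of $\Sdd$. Since $\Sdd$ is a finite-dimensional normed space, a convex function which is finite on an open convex set is automatically continuous there (this is a classical fact of convex analysis, cf.\ Rockafellar's \emph{Convex Analysis}, Thm.\ 10.1). Applied on all of $\Sdd$, this yields continuity of $\xi \mapsto j\bigl(\hf(x),\xi\bigr)$ for \emph{every} $x \in \Ob$, which is in fact stronger than the required $\mu$-a.e.\ statement.

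For (ii), I fix $\xi \in \Sdd$ and set $g := j(\,\cdot\,,\xi) : \Hs \to \R$. By hypothesis (H\ref{as:concave}) the function $g$ is upper semi-continuous on the closed convex cone $\Hs$. Every upper semi-continuous real-valued function on a metric space is Borel measurable: its super-level sets $\{g \geq \alpha\}$ are closed, hence Borel. On the other hand $\hf: \Ob \to \Hs$ is $\mu$-measurable by assumption, i.e.\ the preimage under $\hf$ of every Borel set in $\Hs$ is $\mu$-measurable in $\Ob$. Composing, the map $x \mapsto g\bigl(\hf(x)\bigr) = j\bigl(\hf(x),\xi\bigr)$ is then $\mu$-measurable, proving (ii).

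The argument is essentially mechanical: the only mild subtlety is the step ``upper semi-continuous implies Borel measurable,'' which could be the stumbling block if one overlooked that finite-valued concave functions on a convex set in a finite-dimensional space are in fact continuous on the relative interior and at worst upper semi-continuous on the boundary, so they certainly are Borel. Everything else follows immediately from (H\ref{as:convex}), (H\ref{as:concave}) and the stability of $\mu$-measurability under composition with a Borel map.
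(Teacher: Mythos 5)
Your proof is correct and follows essentially the same route as the paper: part (i) is the standard fact that a finite convex function on a finite-dimensional space is continuous, and part (ii) composes the $\mu$-measurable map $\hf$ with the Borel (indeed upper semi-continuous) function $j(\argu,\xi)$, the paper phrasing this via openness of the sublevel sets $\{j(\argu,\xi)<\alpha\}$ in the relative topology of $\Hs$ rather than closedness of the superlevel sets. No gaps.
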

\begin{proof}
	The statement (i) follows easily from the assumption (H\ref{as:convex}) since every convex function that is finite on the whole finite dimensional space (here $\Sdd$) is automatically continuous.
	
	We fix $\xi \in \Sdd$; to see that (ii) holds it is enough to show that for arbitrary $\alpha \in \R$ the set $\{x\in \nolinebreak \Ob \,:\, j\bigl(\hf(x),\xi\bigr) <\alpha \} $ is $\mu$-measurable. Due to the upper semi-continuity assumption (H\ref{as:concave}) the set $A =\{\hk \in \Hs \,: \, j\bigl(\hk,\xi\bigr) <\alpha \}$ is open in topology of $\mathscr{L}(\Sdd)$ relative to $\Hs$. Since $\hf$ is $\mu$-measurable we obtain that  $\hf^{-1}(A)$ is $\mu$-measurable and the thesis follows.
\end{proof}

	In compliance with convex analysis a convex function restricted to convex subset of a linear space can be equivalently treated as a function defined on the whole space if extended by $+\infty$. Since the real function $j:\Hs \times \Sdd \rightarrow \R$ is concave with respect to first variable $\hk$ we can by analogy speak of an extended real function $j:\mathscr{L}(\Sdd) \times \Sdd \rightarrow \Rb = [-\infty,\infty]$ such that $j(\hk,\xi) = -\infty$ for any $\xi \in \Sdd$ and any $\hk \in \mathscr{L}(\Sdd)\backslash \Hs$.

\begin{proposition}
	\label{prop:usc_j}
	The function $j$ is upper semi-continuous on the product $\LSdd \times \Sdd$, i.e. jointly in variables $\hk$ and $\xi$.
	\begin{proof}
		W fix a pair $(\breve{\hk},\breve{\xi}) \in \LSdd \times \Sdd$. We may assume that $\breve{\hk} \in \Hs$ since otherwise $j(\breve{\hk},\breve{\xi}) = -\infty$ and the thesis follows trivially.
		Let us take any ball $U \subset \LSdd$ centred at $\breve{\hk}$ and introduce a compact set $K = \overline{U} \cap \Hs$. We observe that for any fixed $\xi \in \Sdd$ the set $\{j(\hk,\xi) : \hk \in \nolinebreak K \}$ is bounded in $\R$. The zero lower bound follows from non-negativity of $j \vert_\Hs$ whereas, since $j(\argu,\xi)$ is real-valued concave and upper semi-continuous on $\Hs$, it achieves its finite maximum on $K$. According to Theorem 10.6 in \cite{rockafellar1970} the shown point-wise boundedness combined with convexity of every $j(\hk,\argu)$ imply that the family of functions $\{j(\hk,\argu) : \hk \in K \}$ is equi-continuous on any bounded subset of $\Sdd$. Upon fixing $\eps>0$ we may thus choose $\delta_1>0$ such that
		\begin{equation*}
			\abs{j(\hk,\xi) - j(\hk,\breve{\xi})} <\frac{\eps}{2} \qquad \forall\, \xi \in B(\breve{\xi},\delta_1) \subset \Sdd, \quad \forall\, \hk \in K \subset \Hs,
		\end{equation*}
		where it must be stressed that $K$ does not depend on $\eps$. Due to the upper semi-continuity of $j(\argu,\breve{\xi})$ we can also choose $\delta_2>0$ for which $B(\breve{\hk},\delta_2) \subset U$ and
		\begin{equation*}
			j(\hk,\breve{\xi}) < j(\breve{\hk},\breve{\xi}) + \frac{\eps}{2} \qquad \forall\, \hk \in B(\breve{\hk},\delta_2).
		\end{equation*}
		For any pair $(\hk,\xi) \in \bigl(B(\breve{\hk},\delta_2)\cap\Hs \bigr) \times B(\breve{\xi},\delta_1)$ we therefore obtain
		\begin{equation*}
			j(\hk,\xi) = j(\hk,\breve{\xi}) + \bigl( j(\hk,\xi) - j(\hk,\breve{\xi}) \bigr) < j(\breve{\hk},\breve{\xi}) +\frac{\eps}{2} + \frac{\eps}{2},
		\end{equation*}
		which proves that $j$ is upper semi-continuous on $\Hs \times \Sdd$ being a convex and closed subset of $\LSdd \times \Sdd$. Extending $j$ by $-\infty$  guarantees its upper semi-continuity on $\LSdd \times \Sdd$.
	\end{proof}
\end{proposition}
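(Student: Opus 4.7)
The plan is to establish upper semi-continuity at an arbitrary fixed point $(\breve{\hk},\breve{\xi}) \in \LSdd \times \Sdd$ by splitting into two cases according to whether $\breve{\hk}$ lies in $\Hs$. If $\breve{\hk} \notin \Hs$, closedness of $\Hs$ gives a ball $U \subset \LSdd$ centred at $\breve{\hk}$ with $U \cap \Hs = \emptyset$, on which the extension $j$ is identically $-\infty$; upper semi-continuity at $(\breve{\hk},\breve{\xi})$ is then automatic. The substantive case is $\breve{\hk} \in \Hs$, where my task is to glue the separate hypotheses (H\ref{as:convex}) (convexity in $\xi$) and (H\ref{as:concave}) (upper semi-continuity in $\hk$) into a joint upper semi-continuity statement.

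The whole argument will hinge on the elementary splitting
\[j(\hk,\xi) - j(\breve{\hk},\breve{\xi}) = \bigl[j(\hk,\xi) - j(\hk,\breve{\xi})\bigr] + \bigl[j(\hk,\breve{\xi}) - j(\breve{\hk},\breve{\xi})\bigr].\]
The second bracket I would control, for $\hk$ close to $\breve{\hk}$, directly by hypothesis (H\ref{as:concave}). The first bracket is the real issue: I need an estimate that is uniform in $\hk$ ranging over a small compact set. For this I plan to set $K := \overline{U} \cap \Hs$ for a small closed ball $U$ about $\breve{\hk}$ and establish equi-continuity of the family of convex functions $\{j(\hk,\argu) : \hk \in K\}$ on a ball around $\breve{\xi}$.

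The equi-continuity would come from the classical result (Theorem 10.6 of \cite{rockafellar1970}) that any pointwise bounded family of real-valued convex functions on a finite-dimensional space is automatically equi-continuous on bounded subsets. To verify pointwise boundedness I would observe that, for each fixed $\xi \in \Sdd$, the function $\hk \mapsto j(\hk,\xi)$ is bounded below by $0$ on $K$ by non-negativity in (H\ref{as:convex}) and attains a finite maximum on the compact set $K$ by upper semi-continuity in (H\ref{as:concave}). Given $\eps>0$, equi-continuity then furnishes $\delta_1>0$ with $\bigl|j(\hk,\xi) - j(\hk,\breve{\xi})\bigr| < \eps/2$ uniformly in $\hk \in K$ for $\xi \in B(\breve{\xi},\delta_1)$, while (H\ref{as:concave}) furnishes $\delta_2>0$ with $B(\breve{\hk},\delta_2) \subset U$ and $j(\hk,\breve{\xi}) - j(\breve{\hk},\breve{\xi}) < \eps/2$. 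Adding the two estimates and using the $-\infty$ extension off $\Hs \times \Sdd$ would deliver upper semi-continuity at $(\breve{\hk},\breve{\xi})$.

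The main obstacle I anticipate is precisely the uniform control of the first bracket: separate continuity in each variable does not imply joint upper semi-continuity in general, and (H\ref{as:concave}) alone supplies no modulus of continuity in $\xi$. The essential input is convexity in $\xi$, which through Rockafellar's theorem upgrades the easy pointwise boundedness on the compact $K$ into the $\hk$-uniform modulus of continuity in $\xi$ that the splitting demands.
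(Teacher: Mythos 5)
Your proof is correct and follows essentially the same route as the paper's: the same case split on $\breve{\hk}\in\Hs$, the same decomposition of $j(\hk,\xi)-j(\breve{\hk},\breve{\xi})$ into two brackets, and the same use of pointwise boundedness on the compact set $K=\overline{U}\cap\Hs$ together with Theorem 10.6 of \cite{rockafellar1970} to obtain the $\hk$-uniform equi-continuity in $\xi$. Your explicit remark that closedness of $\Hs$ gives a whole neighbourhood on which $j\equiv-\infty$ in the case $\breve{\hk}\notin\Hs$ is a slightly more careful rendering of a step the paper dismisses as trivial.
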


The elastic properties of a $d$-dimensional body contained in $\Ob$ and, in fact, the shape of the body itself shall be fully determined by a constitutive field or a \textit{Hooke tensor field} represented by a $\LSdd$-valued measure $\lambda \in \Mes\bigl(\Ob;\LSdd \bigr)$; we note that $\lambda$ is compactly supported in $\Rd$. Let $f$ be any norm on the space $\LSdd$ (chosen in the sequel as the cost function), then according to Radon-Nikodym theorem $\lambda$ can be decomposed as follows
\begin{equation}
	\label{eq:lambda_decomp}
	\lambda = \hf \, \mu, \qquad \mu \in \Mes_+(\Ob), \quad \hf \in L^\infty_\mu\bigl(\Ob;\LSdd \bigr), \quad f(\hf) = 1 \ \ \mu\text{-a.e.},
\end{equation}
that is $\mu$ can be computed as variation measure $\abs{\lambda}$ with respect to the norm $f$ while $\hf$ is the Radon-Nikodym derivative $\frac{d \lambda}{ d \abs{\lambda}}$. Unless any confusion may arise, henceforward $\hf, \mu$ shall always denote the unique decomposition of $\lambda$ as above. This way the information $\lambda$ about the Hooke tensor field has been split into two: i) information on the distribution of elastic material $\mu$ that after \cite{bouchitte2001} shall be called \textit{mass distribution}; ii) information on the anisotropy $\hf$.

Displacement of the body shall be expressed by a vector valued function and although the body is essentially contained in the support of $\mu$ it is convenient to start with displacement fields $u$ determined in the whole $\Rd$, more precisely $u \in \Dd$ where $\D(\Rd)$ stands for the standard test space of smooth functions, while $\Dd = \D(\Rd;\Rd)$ denotes its $d$ copies. Next, the strain $\eps$ will be a tensor valued function being the symmetric part of the gradient of $u$:
\begin{equation*}
	\eps = e(u) := \frac{1}{2} \left( \nabla u + (\nabla u)^\top \right) \quad \in \quad \D(\Rd;\Sdd) \ \text{ for } \ u\in \Dd.
\end{equation*}

With a Hooke field $\lambda$ fixed the total strain energy $\Jlam{\lambda}$ of an elastic body is a convex functional on a space of strain functions, more accurately $\Jlam{\lambda}: L^p_\mu\bigl(\Ob;\Sdd \bigr) \rightarrow \Rb$ and it is defined as follows
\begin{equation}
	\label{eq:J_lambda}
	\Jlam{\lambda}(\eps) := \int j\bigl(\lambda,\eps\bigr) = \int j\bigl(\hf(x),\eps(x) \bigr) \mu(dx),
\end{equation}
where we have utilised one-homogeneity of $j$ with respect to the first argument. We note that $J_\lambda$ is proper (does not admit $-\infty$ anywhere and is not constantly $\infty$) and in fact non-negative if and only if $\hf(x) \in \Hs$ for $\mu$-a.e. $x \in \Ob$. Indeed, for any $\hk \notin\Hs$ and arbitrary $\xi \in \Sdd$ one obtains $j(\hk,\xi) = - \infty$. Therefore, although formally $\lambda \in \Mes\bigl(\Ob;\LSdd \bigr)$, the condition on $\Jlam{\lambda}$ being finite will virtually force that the Hooke function $\hf$ point-wise lies in $\Hs$ as desired.

It is elementary that the convex functional $J_\lambda$ is weakly lower semi-continuous on $L^p_\mu\bigl(\Ob;\Sdd \bigr)$. In the process of optimization the Hooke field $\lambda$ will play a role of the design variable and hence we must examine the weak-* upper semi-continuity of a concave functional $J_{(\argu)}(\eps): \MesH \rightarrow \Rb$ for a fixed continuous function $\eps$. It is natural to take the convex functional $-J_{(\argu)}(\eps)$ instead, yet the issue with utilizing the classical theorems (see e.g. \cite{reshetnyak1968}) to show its lower semi-continuity is that $-J_{(\argu)}(\eps)$ admits negative values.

\begin{proposition}	
	\label{prop:usc_J}
	Let us take any $\eps \in C\bigl(\Ob;\Sdd \bigr)$, then the functional $J_{(\argu)}(\eps)$ is weakly-* upper semi-continuous in the space $\MesH$.
	\begin{proof}
		The idea is to show that there exists a continuous function $G: \Sdd \rightarrow \left(\LSdd\right)^* \equiv \LSdd$ such that for every $\xi \in \Sdd$ we obtain a majorization $\pairing{G(\xi),\argu} \geq j(\argu,\xi)$ on $\LSdd$. Once this is established we define $g:\Ob \times \LSdd \rightarrow \Rb$ by
		\begin{equation*}
		g(x,\hk) := \pairing{G\bigl(\eps(x) \bigr),\hk} - j(\hk,\eps(x)). 
		\end{equation*}
		Since $G$ is continuous and $j$ is upper semi-continuous jointly on $\LSdd \times \Sdd$ by Proposition \ref{prop:usc_j}, we see by uniform continuity of $\eps$ that the function $g$ is lower semi-continuous jointly on $\Ob \times \nolinebreak \LSdd$. Then, due to non-negativity of $g$ and its convexity together with positive one-homogeneity with respect to the second variable, it is a classical result (see e.g. \cite{reshetnyak1968} or \cite{bouchitte1988}) that the functional $\lambda \mapsto \int g\bigl(x,\lambda(dx)\bigr)$ is weakly-* lower semi-continuous on $\MesH$. We observe that for any $\eps \in C(\Ob;\Sdd)$
		\begin{equation*}
			J_\lambda(\eps) = \int  \pairing{G\bigl(\eps(x) \bigr),\lambda(x)} -  \int g\bigl(x,\lambda(x)\bigr),
		\end{equation*}
		hence the functional $J_{(\argu)}(\eps)$ is a difference of a continuous linear functional (the function $G \circ \nolinebreak \eps:\Ob \rightarrow \LSdd$ is uniformly continuous) and weakly-* lower semi-continuous functional on $\MesH$, which furnishes the thesis.
		
		To conclude the proof we must therefore show existence of the function $G$. We will work with function $j^- := - j$ (convex and proper in the first variable) instead, while the function $G:\Sdd \rightarrow \LSdd$ must be its linear minorant in the sense displayed above for the majorant (we keep the symbol $G$ nevertheless). First we show that for every $\xi \in \Sdd$ the proper, convex and l.s.c. function $j^-(\argu,\xi):\LSdd \rightarrow \Rb$ is subdifferentiable at the origin, i.e. $\partial_1 j^-(0,\xi) \neq \varnothing$ where in this proof by $\partial_1 j^-$ we shall understand the subdifferential with respect to the first argument. By Theorem 23.3 in \cite{rockafellar1970} the scenario  $\partial_1 j^-(0,\xi) = \varnothing$ can occur only if there exists a direction $\Delta\hk \in \LSdd$ such that the directional derivative with respect to the first argument $j^-_{\Delta\hk}(0,\xi)$ equals $-\infty$. Since $j^-(\argu,\xi)$ is positively homogeneous our argument for subdifferentiability at the origin amounts to verifying that $j^-(\hk,\xi)> -\infty$ for every $\hk$ in a unit sphere in $\LSdd$. But this is trivial since we know that $j^-(\argu,\xi)$ is proper for each $\xi$.
		
		We have thus arrived at a multifunction $\Gamma: \Sdd \ni \xi \mapsto \partial_1 j^-(0,\xi) \in \bigl(2^{\LSdd} \backslash \varnothing\bigr)$ that is convex and closed valued. According to Theorem 3.2" in \cite{michael1956} in order to show that there exists a continuous selection $G$ of $\Gamma$ it suffices to show that $\Gamma$ is l.s.c (in the sense of theory of multifunctions). In turn, Lemma A2 in the appendix of \cite{bouchitte1988} states that $\Gamma$ is l.s.c. if and only if the function $(\Delta \hk,\xi) \mapsto \delta^*\bigl( \Delta \hk \,\vert\, \partial_1 j^-(0,\xi)  \bigr)$ is l.s.c. on $\LSdd \times \Sdd$ where $\delta^*$ denotes the support function. The Theorem 23.2 in \cite{rockafellar1970} says that $\delta^*\left( \Delta \hk \,\vert\, \partial_1 j^-(0,\xi)  \right)$ is exactly the directional derivative of $j^-(\argu ,\xi)$ at $\hk=0$ in the direction $\Delta \hk$, but due to homogeneity of $j^-(\argu ,\xi)$ this derivative is precisely $j^-(\Delta \hk,\xi)$ and all boils down to showing lower semi-continuity of $j^- = -j$, which is guaranteed by Proposition \ref{prop:usc_j} in this work.
	\end{proof}
\end{proposition}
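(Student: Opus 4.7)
The plan is to construct, for the fixed continuous strain field $\eps \in C(\Ob;\Sdd)$, an auxiliary integrand $g$ that is non-negative, convex and positively one-homogeneous in its tensor argument, and then to express $J_{(\argu)}(\eps)$ as the difference of a weakly-$*$ continuous linear functional and a weakly-$*$ lower semi-continuous functional. A direct appeal to a Reshetnyak-type lower semi-continuity theorem applied to $-J_{(\argu)}(\eps)$ is blocked because $-j$ is non-positive on $\Hs \times \Sdd$, whereas the classical statements are formulated for non-negative integrands. To fix this, I would look for a continuous linear majorant at the level of the $\hk$ variable: a continuous map $G : \Sdd \to \LSdd$ with $\pairing{G(\xi),\hk} \geq j(\hk,\xi)$ for every $(\hk,\xi) \in \LSdd \times \Sdd$, the inequality being trivial off $\Hs$ where $j = -\infty$.

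Assuming such $G$ is available, set $g(x,\hk) := \pairing{G(\eps(x)),\hk} - j(\hk,\eps(x))$. By construction $g \geq 0$, and for fixed $x$ it is convex and positively one-homogeneous in $\hk$; thanks to continuity of $G \circ \eps$ on the compact set $\Ob$ and the joint upper semi-continuity of $j$ from Proposition \ref{prop:usc_j}, the integrand $g$ is lower semi-continuous on $\Ob \times \LSdd$. The classical weak-$*$ lower semi-continuity result for integrals of non-negative, convex, positively one-homogeneous integrands against vector-valued measures (as in \cite{reshetnyak1968} or \cite{bouchitte1988}) then gives that $\lambda \mapsto \int g(x,\lambda(dx))$ is weakly-$*$ lower semi-continuous on $\MesH$. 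Since $\lambda \mapsto \int \pairing{G \circ \eps, d\lambda}$ is manifestly weakly-$*$ continuous (its integrand being a fixed continuous function), the decomposition
\begin{equation*}
J_\lambda(\eps) = \int \pairing{G(\eps(x)),\lambda(dx)} - \int g(x,\lambda(dx))
\end{equation*}
exhibits $J_{(\argu)}(\eps)$ as the desired difference, and upper semi-continuity follows.

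The main obstacle is the construction of $G$, which I would handle via convex analysis applied to $j^- := -j$. For each fixed $\xi$, $j^-(\argu,\xi)$ is proper, convex, lower semi-continuous and positively one-homogeneous on $\LSdd$ (equal to $+\infty$ off $\Hs$), with $j^-(0,\xi)=0$ by homogeneity. The sought majorant corresponds to a continuous selection of the multifunction $\Gamma(\xi) := \partial_1 j^-(0,\xi)$, since a subgradient at the origin of a positively one-homogeneous convex function is automatically a global linear minorant. Non-emptiness of $\Gamma(\xi)$ follows from Rockafellar's Theorem 23.3: the only obstruction would be a direction where the directional derivative equals $-\infty$, but by one-homogeneity this directional derivative equals $j^-(\Delta\hk,\xi) > -\infty$, since $j^-$ is proper. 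The continuity of the selection I would produce via Michael's selection theorem applied to the convex, closed-valued $\Gamma$; its required lower semi-continuity, via the support function criterion (Lemma A2 in \cite{bouchitte1988}) and Rockafellar's Theorem 23.2, reduces to the lower semi-continuity of $(\Delta\hk,\xi) \mapsto j^-(\Delta\hk,\xi)$ on $\LSdd \times \Sdd$, which is exactly the joint upper semi-continuity of $j$ already established in Proposition \ref{prop:usc_j}. This selection-theoretic step is the delicate part of the argument and is precisely what motivates the preceding upper semi-continuity proposition.
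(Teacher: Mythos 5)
Your proposal is correct and follows essentially the same route as the paper's own proof: the same linear majorant $G$ obtained as a continuous selection of $\xi \mapsto \partial_1 j^-(0,\xi)$ (via Rockafellar's Theorems 23.2--23.3, Michael's selection theorem, and the support-function criterion from \cite{bouchitte1988}), the same non-negative one-homogeneous integrand $g$, and the same decomposition of $J_{(\argu)}(\eps)$ into a weakly-$*$ continuous linear part minus a Reshetnyak-type lower semi-continuous part. No substantive differences to report.
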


A load that may be applied to an elastic body shall be modelled by a vector-valued measure $\Fl \in \Mes(\Ob;\Rd)$. We will assume that our body is not kinematically supported (fixed), e.g. on a boundary of $\Omega$, so in order to have equilibrium the load $\Fl$ must be balanced (see the next subsection for details). We give a definition of \textit{elastic compliance} of elastic body represented by a Hooke field $\lambda \in \MesH$ or, as we shall henceforward write, $\lambda \in \MesHH$:
\begin{equation}
	\label{eq:compliance_def}
	\Comp(\lambda) := \sup \left\{ \int \pairing{u,F} - \int j\bigl(\lambda,e(u)\bigr) \ : \ u \in \Dd \right\}.
\end{equation}
The maximization problem in \eqref{eq:compliance_def} may be viewed as a strain formulation of elasticity problem. The compliance $\Comp(\lambda)$ is always non-negative and it obviously can equal $\infty$ in case when $\lambda$ is not suitably adjusted to $\Fl$. Naturally, even if $\Comp(\lambda)<\infty$, the maximization problem does not, in general, have a solution in the space of smooth functions. Once $j$ satisfies a suitable ellipticity condition, the relaxed solution may be found in a Sobolev space with respect to measure $\mu = \abs{\lambda}$ denoted by $W^{1,p}_\mu$ which was proposed in \cite{bouchitte1997} and then developed in e.g. \cite{bouchitte2007}. In this paper it is crucial that $j$ may be degenerate in the sense that, in particular, $j\bigl(\hf(x),\eps(x)\bigr)$ may vanish for some non-zero $\eps \in L^p_\mu(\Ob;\Sdd)$ on a set of non-zero measure $\mu$. The discussed theory cannot therefore be applied to every pair of measures $\Fl$ and $\lambda$. However, it will appear in Section \ref{sec:FMD_LCP} that the situation is better if $\lambda$ is optimally chosen for $\Fl$.

\subsection{Stress formulation of elasticity theory}

We begin this subsection with a definition of a field that we shall call a \textit{force flux}. By a force flux that equilibrates a load $\Fl \in \Mes\bigl(\Ob;\Rd \bigr)$ in a closed domain $\Ob$ we shall understand a tensor valued measure $\TAU \in \MesT$ that satisfies
\begin{equation}
	\label{eq:eqeq}
	\DIV \,\tau + F = 0
\end{equation}
in sense of distributions on the whole space $\Rd$. Naturally, the above equation can be equivalently written in the form of \textit{virtual work principle}:
\begin{equation*}
	\int \pairing{e(\varphi),\TAU}  = \int \pairing{\varphi,\Fl} \qquad \forall\, \varphi \in \Dd,
\end{equation*}
which is almost by definition up to using the fact that $\pairing{\nabla \varphi(x),  \sigma} = \pairing{ e(\varphi)(x),  \sigma}$ for all $\sigma \in \Sdd$. It is important to note that $\varphi$ above may not vanish on the boundary $\partial\Omega$ and therefore a Neumann boundary condition is accounted for in \eqref{eq:eqeq}, possibly non-homogeneous once $\Fl$ charges $\partial\Omega$.

For existence of a force flux $\TAU$ that equilibrates a load $\Fl$, an assumption on this load is needed: we say that $\Fl$ is \textit{balanced} when one of the two equivalent conditions is satisfied:
\begin{enumerate}[(i)]
	
	\item the virtual work of $\Fl$ is zero on \textit{the space of rigid body displacement functions} $\mathcal{U}_0$:
	\begin{equation*}
		\int \pairing{u_0,F} = 0 \qquad \forall\, u_0 \in \mathcal{U}_0 := \left \{ u \in C^1\bigl(\Ob;\Rd\bigr) \ : \ e(u) = 0 \right\};
	\end{equation*}
	\item $\Fl$ has zero resultant force and moment:
	\begin{equation*}
		\int \Fl = 0 \quad \text{in} \quad \Rd \qquad \text{and} \qquad   \int \bigl( x_i\, F_j - x_j \, F_i\bigr) = 0 \quad \forall \, i,j \in \{1,\ldots,d\}.
	\end{equation*}
\end{enumerate}
A proof that solution $\TAU$ of \eqref{eq:eqeq} exists if and only if $\Fl$ is balanced may be found in \cite{bouchitte2008}.\textit{ Henceforward we shall assume that the load $\Fl$ is indeed balanced.}

For an elastic body represented by a Hooke field $\lambda \in \MesHH$ and subjected to a balanced load $\Fl \in \MesF$ we derive the dual problem to \eqref{eq:compliance_def} with one of the Fenchel transformations performed in duality pairing $L^p_\mu(\Ob;\Sdd) \, ,\, L^{p'}_\mu(\Ob;\Sdd)$ where $\mu = \abs{\lambda}$ and $\hf = \frac{d\lambda}{d\mu}$:
\begin{equation}
	\label{eq:dual_comp}
	\Comp(\lambda) = \min \left\{ \int j^*\bigl(\hf(x),\sigma(x) \bigr) \, d\mu \ : \ \sigma \in L^{p'}_\mu(\Ob;\Sdd), \ -\DIV ( \sigma \mu ) =\Fl \right\}
\end{equation}
where $j^*$ denotes the Fenchel conjugate with respect to the second variable. Upon acknowledging Proposition \ref{prop:Carath} and the fact that the functional $\int j(\lambda,\argu)\, d\mu : L^p_\mu(\Ob;\Sdd) \rightarrow \R$ is continuous we find that the duality argument is a use of a standard algorithm from Chapter III in \cite{ekeland1999} hence we decide not to display the details. We note that as a part of \eqref{eq:dual_comp} we claim that $\Comp(\lambda) < \infty $ and that the minimizer exists, which is true for balanced $\Fl$. We observe that \eqref{eq:dual_comp} may be considered a dual definition of compliance while the minimization problem itself is a stress-based formulation of the elasticity problem.

\section{The Free Material Design problem}
\label{sec:FMD_problem}

\subsection{Formulation of the optimal design problem}
In the optimization problem herein considered the Hooke field $\lambda \in \MesHH$ will play a role of the design variable. The natural constraint on $\lambda$ will be the bound on the total cost, therefore we must choose a cost integrand $\cost:\Hs \rightarrow \R_+$ that satisfies essential properties: convexity, positive homogeneity, lower semi-continuity on $\Hs$ and $\cost(\hk) = 0 \Leftrightarrow \hk=0$. Since $\Hs$ is a closed convex cone consisting of positively semi-definite tensors, for every non-zero $\hk \in \nolinebreak \Hs$ necessarily $-\hk \notin \Hs$. Then it is easily seen that every such function $\cost$ extends to a norm on the whole space $\LSdd$. It is thus suitable that
\begin{equation*}
	\text{we choose the cost function } \cost\text{ as restriction of any norm on } \LSdd \text{ to } \Hs.
\end{equation*} 
\begin{example}
	In the pioneering work on the Free Material Design problem \cite{ringertz1993} the cost function $\cost$ was proposed as the trace function, i.e.
	\begin{equation*}
		\cost(\hk) = \tr \, \hk = \sum_{i=1}^{N(d)} \lambda_i(\hk) \qquad \forall\, \hk \in \Hs
	\end{equation*}
	where $\lambda_i(\hk)$ denotes $i$-th eigenvalue of tensor (in fact a symmetric operator) $\hk$; $N(d) =\frac{1}{2}\, d\, (1 + d)$ is the dimension of the space of symmetric tensors $\Sdd$ (e.g. $N(2)=3$ and $N(3)=6$). Note that $\tr:\Hs \rightarrow \R_+$ may be extended to the whole space $\LSdd$ by $ \sum_{i=1}^{N(d)} \abs{\lambda_i(\hk)}$ being a norm dual to the spectral one. This is an exceptional example of a cost function $\cost$ for it is linear on $\Hs$.
\end{example} 

Our problem of designing in a domain $\Ob$ an optimal elastic body which equilibrates a balanced load $\Fl$  can be readily posed as a compliance minimization problem:
\vspace{5mm}
\begin{equation}
	\label{eq:FMD_problem}
	\FMD \qquad \qquad \qquad \Cmin = \min \biggl\{ \Comp(\lambda) \ : \ \lambda \in \MesHH, \ \int \cost(\lambda) \leq \Totc \biggr\} \qquad \qquad \qquad
\end{equation}
\vspace{0mm}

\noindent which, due to the point-wise free choice of anisotropy $\hf(x) = \frac{d\lambda}{d\abs{\lambda}}(x) \in \Hs$, receives the name \textit{Free Material Design problem} (FMD). The positive number $\Totc$ is the maximal cost of an elastic body. In the decomposition \eqref{eq:lambda_decomp} the function $f$ could be any norm on $\LSdd$ therefore at this point it is convenient to assume $f=c$ and henceforward by a pair $\hf$, $\mu$ we shall always understand the decomposition $\lambda = \hf \mu$ with $\cost(\hf) = 1\ $ $\mu$-a.e. This way the constraint can be rewritten as $\int \cost(\lambda) = \int \cost(\hf) \, d\mu = \int d\mu \leq \Totc$ which is a constraint on the total mass of the elastic body, cf. \cite{bouchitte2001}.

By recalling the definition \eqref{eq:compliance_def} of the compliance we find that, as a point-wise supremum of a family of convex and weakly-* lower semi-continuous functionals on $\MesH$ (see Proposition \ref{prop:usc_J}), $\Comp$ \nolinebreak is itself convex and weakly-* l.s.c. Since $c$ is a norm, in $\FMD$ we are actually performing minimization over a bounded and thus weakly-* compact set in $\MesH$. We infer that our problem has a solution $\check\lambda$ whenever $\Cmin$ is finite, which is indeed the case for a balanced load $\Fl$.

\subsection{Reduction of the Free Material Design problem to a Linear Constrained Problem}
\label{sec:from_FMD_to_LCP}

With the definition \eqref{eq:compliance_def} of $\Comp(\lambda)$ plugged explicitly into $\FMD$ problem we arrive at a min-max problem:
\begin{equation}
	\label{eq:min-max}
	\Cmin = \inf\limits_{\substack{\lambda \in \Mes(\Ob;\Hs), \\ \int \cost(\lambda) \leq \Totc^{}} } \sup\limits_{\ u\in \mathcal{D}(\Rd;\Rd)} \  \biggl\{ \int \pairing{u,F} - \int j\bigl(\lambda,e(u)\bigr) \biggr\}.
\end{equation}
By acknowledging  Proposition \ref{prop:usc_J} from this work we easily verify the assumptions of Proposition 1 in \cite{bouchitte2007} which allows to interchange $\inf$ and $\sup$ above. We may thus formulate a variant of Theorem 1 from \cite{bouchitte2007}, but first we introduce some additional notions. The function $\jh :\Sdd \rightarrow \R$ shall represent a strain energy that is maximal with respect to admissible anisotropy represented by Hooke tensor $\hk \in \Hs$ of a unit $\cost$-cost:  
\begin{equation}
	\label{eq:jh_def}
	\jh(\xi) := \sup\limits_{\hk \in \Hc} j(\hk,\xi), \qquad \Hc := \bigl\{\hk \in \Hs \ : \ \cost(\hk) \leq 1  \bigr\}.
\end{equation}
As a point-wise supremum of a family of convex functions $\left\{j(\hk,\argu) : \hk \in \Hc \right\}$ the function $\jh$ is convex as well. Furthermore, since each $j(\hk,\argu)$ is positively $p$-homogeneous by assumption (H\ref{as:p-hom}), the function $\jh$ inherits this property. Next, due to concavity and upper semi-continuity of $j(\argu,\xi)$ together with compactness of $\Hc$, we see that $\jh(\xi) = \max\limits_{\hk \in \Hc} j(\hk,\xi) = j(\bar{\hk}_\xi,\xi)$ for some $\bar{\hk}_\xi \in \Hc$ and in particular $\jh$ is finite on $\Sdd$. It is natural to define
\begin{equation*}
	\Hch(\xi) := \bigl\{ \hk \in \Hc : \jh(\xi) = j(\hk,\xi) \bigr\}
\end{equation*}
being  a non-empty, convex and compact subset of $\Hc$ for every $\xi\in \Sdd$. The short over-bar $\bar{\argu}$ will be consistently used in the sequel to stress maximization with respect to Hooke tensor or field and should not be confused with long over-bar $\overline{\argu}$ denoting e.g. the closure of a set.

We have just showed that $\jh$ is a convex, continuous and positively $p$-homogeneous function and it is well-known (see e.g. Corollary 15.3.1 in \cite{rockafellar1970}) that it can be written as
\begin{equation}
	\label{eq:jh_rho}
	\jh(\xi) = \frac{1}{p} \bigl( \ro(\xi) \bigr)^p,
\end{equation}
where $\ro:\Sdd \rightarrow \R_+$ is a positively one-homogeneous function.

From the ellipticity assumption (H5) follows that $\jh(\xi) = 0$ if and only if $\xi = 0$ and the same holds for $\rho$. It is thus straightforward that:
\begin{proposition}
	\label{prop:rho}
	The function $\ro: \Sdd \rightarrow \R_+$ is a finite, continuous, convex positively one-homogeneous function that satisfies for some positive constants $C_1,C_2$
	\begin{equation*}
		C_1 \abs{\xi} \leq \ro(\xi) \leq C_2 \abs{\xi} \qquad \forall\, \xi\in \Sdd.
	\end{equation*} 
\end{proposition}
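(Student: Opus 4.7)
The plan is to transfer finiteness, continuity, convexity and positive one-homogeneity of $\rho$ from the corresponding properties of $\jh$ via the relation $\jh = \frac{1}{p}\rho^{\,p}$ in \eqref{eq:jh_rho}, and then to obtain the two-sided bound by combining the non-degeneracy of $\rho$ (already flagged in the paragraph preceding the statement) with compactness of the Euclidean unit sphere in $\Sdd$.

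Finiteness and positive one-homogeneity of $\rho$ drop out by taking $p$-th roots in \eqref{eq:jh_rho}, because $\jh$ is finite and positively $p$-homogeneous. Continuity of $\rho$ comes for free as well: $\jh$ is continuous on $\Sdd$ since a convex function finite on a finite-dimensional space is automatically continuous, and $t \mapsto (pt)^{1/p}$ is continuous on $[0,\infty)$. For convexity I would invoke the same Corollary 15.3.1 in \cite{rockafellar1970} already used for the representation \eqref{eq:jh_rho}: the sublevel set $\{\xi \in \Sdd : \rho(\xi) \leq 1\} = \{\xi \in \Sdd : \jh(\xi) \leq 1/p\}$ is a closed convex subset of $\Sdd$ containing the origin, and $\rho$ is precisely its Minkowski gauge, hence automatically convex.

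For the two-sided estimate, the key input is that $\rho(\xi) > 0$ for every $\xi \neq 0$. This is observed directly above the statement, and at the level of $\jh$ it is obtained by taking the $\hk \in \Hs$ provided by (H\ref{as:elip}), rescaling it via (H\ref{as:1-hom}) and the fact that $\cost$ is a norm to $\hk/\cost(\hk) \in \Hc$, and using $\jh(\xi) \geq j\bigl(\hk/\cost(\hk),\xi\bigr) > 0$. Granted this, since $\rho$ is continuous and positively one-homogeneous it attains on the compact Euclidean unit sphere of $\Sdd$ a strictly positive minimum $C_1$ and a finite maximum $C_2$; positive one-homogeneity then upgrades these values to $C_1 \abs{\xi} \leq \rho(\xi) \leq C_2 \abs{\xi}$ on all of $\Sdd$. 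The only genuine content here is the strict positivity of $C_1$, which is precisely what assumption (H\ref{as:elip}) exists to guarantee; everything else is a routine transfer of properties from $\jh$ to $\rho$.
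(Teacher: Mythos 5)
Your proposal is correct and follows essentially the same route the paper intends: the paper declares the proposition ``straightforward'' precisely because the preceding paragraphs already establish that $\jh$ is finite, convex, continuous and positively $p$-homogeneous, that $\rho$ is its gauge via Corollary 15.3.1 of \cite{rockafellar1970}, and that (H\ref{as:elip}) gives $\jh(\xi)=0$ iff $\xi=0$. Your filling-in of the details (transfer of properties through the $p$-th root, the Minkowski-gauge argument for convexity, and the compactness argument on the unit sphere for the two-sided bound) is exactly the omitted routine verification.
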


Our theorem can be readily stated:

\begin{theorem}
	\label{thm:problem_P}
	For a balanced load $\Fl \in \MesF$ the minimum value of compliance in $\FMD$ problem \eqref{eq:FMD_problem} equals
	\begin{equation}
		\label{eq:Cmin}
		\Cmin = \frac{1}{p' \, \Totc^{p'-1}}\ Z^{\,p'}
	\end{equation}
	where we introduce an auxiliary variational problem with a linear objective
	\vspace{5mm}
	\begin{equation}
		\label{eq:Prob}
		\Prob \qquad \qquad Z := \sup \biggl\{ \int \pairing{u,F} \ : \ u\in \Dd, \ \ro\bigl( e(u) \bigr) \leq 1 \text { point-wise in } \Omega \biggr\}. \qquad \qquad
	\end{equation}
	\vspace{0mm}
	\begin{proof}
		Upon swapping $\inf$ and $\sup$ in \eqref{eq:min-max} the latter may be rewritten as
		\begin{equation*}
			\Cmin = \sup\limits_{\ u\in \mathcal{D}(\Rd;\Rd)} \  \biggl\{ \int \pairing{u,F} - \bar{J}\bigl(e(u) \bigr) \biggr\}
		\end{equation*}
		where for any continuous stress field $\eps \in C(\Ob;\Sdd)$ we have
		\begin{equation*}
			\bar{J}(\eps) = \sup\limits_{\lambda \in \Mes(\Ob;\Hs), \ \int \cost(\lambda) \leq \Totc^{} \ } J_\lambda(\eps) = \sup\limits_{\substack{\mu \in \Mes_+(\Ob), \ \int d\mu \,\leq \Totc^{} \\ \hf \in L^1_\mu(\Ob;\Hs), \ \cost(\hf) = 1} } \int j(\hf,\eps)\, d\mu,
		\end{equation*}
		where we decomposed $\lambda$ to $\hf \mu$ with $c(\hf) = 1\ $ $\mu$-a.e. (the symbol $\bar{J}$ is not to be confused with l.s.c. regularization of some functional $J$). Further we fix the strain field $\eps$. For any pair $\hf,\mu$ admissible above we easily find an estimate
		\begin{equation*}
			\int j(\hf,\eps)\, d\mu \leq \int \jh(\eps) \, d\mu \leq \, \norm{\jh(\eps)}_{L^\infty(\Ob)} \int d\mu\, \leq \Totc \,\norm{\jh(\eps)}_{L^\infty(\Ob)}
		\end{equation*}
		which yields $\bar{J}(\eps) \leq  \Totc \,\norm{\jh(\eps)}_{L^\infty(\Ob)}$. We shall show that the RHS of this inequality is attainable for a certain competitor $\bar{\lambda}_\eps$.
		
		By Proposition \ref{prop:rho} we see that due to continuity of $\eps$ on $\Ob$ the function $\jh\bigl(\eps(\argu) \bigr)$ is continuous on a compact set $\Ob$ as well and thus there exists $\bar{x} \in \Ob$ such that  $\norm{\jh(\eps)}_{L^\infty(\Ob)} = \jh\bigl(\eps(\bar{x}) \bigr)$. With a strain function $\eps$ fixed we propose $\bar{\lambda}_\eps = \Totc\,  \bar{\hk}_{\eps(\bar{x})} \, \delta_{\bar{x}}$ where $\bar{\hk}_{\eps(\bar{x})}$ is any Hooke tensor from the non-empty set $\Hch\bigl(\eps(\bar{x}) \bigr)$ and $\delta_{\bar{x}}$ is a Dirac delta measure at $\bar{x}$. It is trivial to check that  $\int \cost(\bar{\lambda}_\eps) =\Totc$ whilst
		\begin{equation*}
			J_{\bar{\lambda}_\eps}(\eps)  = \Totc \int  j\bigl(\bar{\hk}_{\eps(\bar{x})}, \eps(x) \bigr) \, \delta_{\bar{x}}(x) = \Totc\,  j\bigl(\bar{\hk}_{\eps(\bar{x})}, \eps(\bar{x}) \bigr) = \Totc \, \jh\bigl(\eps(\bar{x}) \bigr) = \Totc \, \norm{\jh(\eps)}_{L^\infty(\Ob)},
		\end{equation*}
		which proves that indeed $\bar{J}(\eps) =\Totc \, \norm{\jh(\eps)}_{L^\infty(\Ob)}$ and further that also $\bar{J}(\eps) =\frac{\Totc}{p} \, \left(\norm{\ro(\eps)}_{L^\infty(\Ob)} \right)^p$.

		Next we use a technique that was already applied in \cite{golay2001}: by substitution $u = t \,u_1$ we obtain
		\begin{alignat*}{1}
		\Cmin &= \sup\limits_{\ u\in \mathcal{D}(\Rd;\Rd)} \  \biggl\{ \int \pairing{u,F} - \frac{\Totc}{p} \, \left(\norm{\ro\bigl(e(u)\bigr)}_{L^\infty(\Ob)} \right)^p \biggr\}\\
		& = \sup\limits_{\substack{ u_1\in \mathcal{D}(\Rd;\Rd),\, t\geq 0}}   \biggl\{ \biggl(\int \pairing{u_1,F}\biggr) \,t - \frac{\Totc}{p} \, t^p \ : \ \norm{\ro\bigl(e(u_1)\bigr)}_{L^\infty(\Ob)} = 1\biggr\}\\
		&= \sup\limits_{u_1\in \mathcal{D}(\Rd;\Rd)}   \biggl\{\frac{1}{p' \, \Totc^{p'-1}}\ \biggl(\int \pairing{u_1,F}\biggr)^{\,p'} \ : \ \norm{\ro\bigl(e(u_1)\bigr)}_{L^\infty(\Ob)} \leq 1\biggr\},
		\end{alignat*}
		where, under the assumption that $\int \pairing{u_1,F}$ is non-negative, in the last step we have computed the maximum with respect to $t$ which was attained for $\bar{t} = \bigl(\frac{\int \pairing{u_1,F}}{\Totc}\bigr)^{p'-1}$. Since the power function $(\argu)^{p'}$ is increasing for non-negative arguments the thesis follows.
	\end{proof}
\end{theorem}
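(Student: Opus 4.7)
The plan is to convert the stated $\inf$–$\sup$ expression \eqref{eq:min-max} into a single maximization over displacements by three successive reductions: exchange of $\inf$ and $\sup$, explicit evaluation of the inner supremum over the design variable $\lambda$, and a Hölder-type rescaling argument.

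First, I would justify swapping $\inf$ and $\sup$ in \eqref{eq:min-max}. For fixed $u\in\Dd$, the map $\lambda\mapsto\int\pairing{u,F}-\int j(\lambda,e(u))$ is concave in $\lambda$ by (H\ref{as:concave}) and weakly-$*$ upper semi-continuous by Proposition \ref{prop:usc_J}; for fixed $\lambda$, it is concave in $u$. The admissible set $\{\lambda\in\Mes(\Ob;\Hs):\int c(\lambda)\le\Totc\}$ is convex and weakly-$*$ compact (bounded in the norm induced by $c$ and closed). These are exactly the hypotheses needed to invoke Proposition 1 of \cite{bouchitte2007}, which yields $\Cmin=\sup_{u}\bigl\{\int\pairing{u,F}-\bar J(e(u))\bigr\}$ with $\bar J(\eps):=\sup_\lambda J_\lambda(\eps)$.

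Second, I would compute $\bar J(\eps)$ for a fixed continuous $\eps\in C(\Ob;\Sdd)$. Writing any admissible $\lambda=\hf\mu$ with $c(\hf)=1$ $\mu$-a.e., the pointwise inequality $j(\hf,\eps)\le\jh(\eps)$ from \eqref{eq:jh_def} combined with $\int d\mu\le\Totc$ gives the upper bound $\bar J(\eps)\le\Totc\,\|\jh(\eps)\|_{L^\infty(\Ob)}$. For the matching lower bound, continuity of $\jh\circ\eps$ on the compact $\Ob$ (using Proposition \ref{prop:rho}) produces a maximizer $\bar x$ of $\jh(\eps(\cdot))$; picking any $\bar\hk_{\eps(\bar x)}\in\Hch(\eps(\bar x))$ (non-empty since $\Hc$ is compact and $j(\cdot,\eps(\bar x))$ is u.s.c.), the concentrated competitor $\bar\lambda_\eps:=\Totc\,\bar\hk_{\eps(\bar x)}\,\delta_{\bar x}$ is admissible and attains it. Hence $\bar J(\eps)=\frac{\Totc}{p}\bigl(\|\ro(\eps)\|_{L^\infty(\Ob)}\bigr)^p$ by \eqref{eq:jh_rho}.

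Third, I would rescale. Set $R(u):=\|\rho(e(u))\|_{L^\infty(\Ob)}$. Writing $u=t u_1$ with $t\ge 0$ and $R(u_1)=1$ (the case $R(u)=0$ contributes $0$ to the supremum because balancedness of $F$ makes $\int\pairing{u,F}=0$ whenever $e(u)=0$), the scalar problem
\[
\max_{t\ge 0}\Bigl\{t\!\int\!\pairing{u_1,F}-\tfrac{\Totc}{p}t^p\Bigr\}
\]
is solved, when $\int\pairing{u_1,F}\ge 0$, at $\bar t=\bigl(\int\pairing{u_1,F}/\Totc\bigr)^{p'-1}$ with value $\frac{1}{p'\Totc^{p'-1}}\bigl(\int\pairing{u_1,F}\bigr)^{p'}$; competitors with $\int\pairing{u_1,F}<0$ are dominated by $t=0$. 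Since $(\cdot)^{p'}$ is monotone on $\R_+$, taking the sup over $u_1$ with $R(u_1)\le 1$ yields $\Cmin=\frac{1}{p'\Totc^{p'-1}}Z^{p'}$.

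The main obstacle is the minimax exchange, because the concave–convex functional is degenerate (the density can blow up; $c$ is only a norm) and lives on measure spaces; once Proposition \ref{prop:usc_J} is in hand the remaining steps are pointwise computations and a one-variable optimization.
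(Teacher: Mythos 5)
Your proposal reproduces the paper's own argument step for step: the same minimax exchange via Proposition 1 of \cite{bouchitte2007} backed by Proposition \ref{prop:usc_J}, the same evaluation of $\bar J(\eps)$ by the upper bound $\Totc\,\norm{\jh(\eps)}_{L^\infty(\Ob)}$ matched by the Dirac competitor $\bar\lambda_\eps=\Totc\,\bar{\hk}_{\eps(\bar x)}\,\delta_{\bar x}$, and the same rescaling $u=t\,u_1$ with the optimal $\bar t=\bigl(\int\pairing{u_1,F}/\Totc\bigr)^{p'-1}$. The only difference is that you spell out the degenerate cases ($R(u)=0$ and $\int\pairing{u_1,F}<0$) which the paper passes over implicitly; the proof is correct.
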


Following the contribution \cite{bouchitte2007} we move on by deriving the problem dual to $\Prob$; in contrary to the duality applied in \eqref{eq:dual_comp} the natural duality pairing here is $C(\Ob;\Sdd),\mathcal{M}(\Ob;\Sdd)$. Again the duality argument is standard up to noting that for any $\TAU \in \nolinebreak \MesT$
\begin{equation}
	\label{eq:duality_of_integral}
	\int \dro(\TAU) = \sup \biggl\{ \int \pairing{\eps,\TAU} \ : \ \eps \in C(\Ob;\Sdd),\ \ro(\eps) \leq 1 \text{ in } \Ob \biggr\};
\end{equation}
the reader is referred to e.g. \cite{bouchitte1988} for the proof. The function $\dro:\Sdd \rightarrow \R_+$ represents the function polar to $\ro$, namely for the stress tensor $\sig \in \Sdd$
\begin{equation*}
	\dro(\sig) = \sup\limits_{\xi \in \Sdd}\biggl\{ \pairing{\xi,\sigma} \ : \ \ro(\xi) \leq 1 \biggr\}
\end{equation*}
where we recall that $\pairing{\argu,\argu}$ stands for the Euclidean scalar product in $\Sdd$. With the use of \eqref{eq:duality_of_integral} a standard duality argument (cf. Chapter III in \cite{ekeland1999}) readily produces the dual to the problem $\Prob$:
\vspace{5mm}
\begin{equation}
	\label{eq:dProb}
	\dProb \qquad \qquad \qquad   Z = \min \biggl\{ \int \dro(\TAU) \ : \ \TAU \in \MesT, \ -\DIV \, \TAU = \Fl \biggr\}  \qquad \qquad
\end{equation}
\vspace{0mm}

\noindent and we emphasize that existence of a solution $\hat{\TAU}$ is part of the duality result ($\Fl$ is assumed to be balanced). After \cite{bouchitte2007} the pair of mutually dual problems $\Prob$ in \eqref{eq:Prob} and $\dProb$ in \eqref{eq:dProb} will be named a \textit{Linear Constrained Problem} (LCP).

\subsection{Designing the anisotropy at a point}
\label{sec:anisotropy_at_point}

The function $\jh$ and therefore also the function $\ro$ (see \eqref{eq:jh_rho}) are expressed via finite dimensional programming problem \eqref{eq:jh_def} where function $j$ enters. It is thus a natural step to examine how the polar $\dro$ depends on $j$ or, as it will appear, on $j^*$. By definition of polar $\dro$ for any pair $(\xi,\sig) \in \Sdd\times\Sdd$ there always holds $\pairing{\xi,\sig} \leq \ro(\xi) \, \dro(\sig)$; we shall say that such a pair satisfies the \textit{extremality condition} for $\rho$ and its polar whenever this inequality is sharp. One of the main results of this subsection will state that this extremality condition is equivalent to satisfying the constitutive law $\sig/\dro(\sig) \in \partial j(\breve{\hk},\xi)$ for some $\breve{\hk} \in \Hc$ optimally chosen for $\sigma$. This link will be utilized while formulating the general optimality conditions for $\FMD$ problem in Section \ref{sec:optimality_conditions}. 

Beforehand we investigate the properties of the Fenchel conjugate $j^*$; by its definition, for a fixed $\hk \in \Hs$ we get a function $j^*(\hk,\argu) : \Sdd \rightarrow \Rb$ expressed by the formula
\begin{equation}
	\label{eq:Fenchel_conj_def}
	j^*(\hk,\sig) = \sup\limits_{\zeta \in \Sdd} \bigl\{ \pairing{\zeta,\sig} - j(\hk,\zeta) \bigr\}.
\end{equation}
It is well-established that $j^*(\hk,\argu)$ is convex and l.s.c. on $\Sdd$, it is also proper and non-negative for each $\hk \in \Hs$ since $j(\hk,\argu)$ is real-valued and equals zero at the origin. For a given $\hk \in \Hs$, however, $j^*(\hk,\argu)$ may admit infinite values: take for instance $\hk\in \Hs$ that is a singular tensor and $j(\hk,\xi) = \frac{1}{2} \pairing{\hk\,\xi,\xi}$, then $j^*(\hk,\sig) = \infty$ for any $\sig \not\perp \Ker\, \hk$.
Furthermore it is well-established that $j^*(\hk,\argu)$ is positively $p'$-homogeneous. 
Next, again for a fixed $\hk \in \Hs$, we look at the subdifferential $\partial j(\hk,\argu) : \Sdd \rightarrow 2^\Sdd$. Almost by definition for $\xi,\sig \in \Sdd$
\begin{equation}
	\label{eq:subdiff_Fenchel}
	\sig \in \partial j(\hk,\xi) \qquad \Leftrightarrow \qquad \pairing{\xi,\sig} \geq j(\hk,\xi) + j^*(\hk,\sig),
\end{equation}
while the opposite inequality on the RHS, known as Fenchel's inequality, holds always. By recalling positive $p$-homogeneity of $j(\hk,\argu)$ it is well established that the following repartition of energy holds (see e.g. \cite{rockafellar1970})
\begin{equation}
	\label{eq:repartition}
	\sig \in \partial j(\hk,\xi) \qquad \Leftrightarrow \qquad \left\{
	\begin{array}{l}
		\pairing{\xi,\sig} = p \ \ j(\hk,\xi),\\
		\pairing{\xi,\sig} = p'\, j^*(\hk,\sig).
	\end{array}
	\right.
\end{equation}

We can infer more about the function $j^*$. Since $j(\argu,\zeta)$ is concave and u.s.c. for every $\zeta\in \Sdd$ the mapping $(\hk,\sig) \mapsto \pairing{\zeta,\sig} - j(\hk,\zeta)$ is for each $\zeta$ convex and l.s.c. jointly in $\hk$ and $\sig$. As a result the function $j^*:\LSdd \times \Sdd \rightarrow \Rb$ is also jointly convex and l.s.c. as a point-wise supremum with respect to $\zeta$. It is, although, not so clear at this point whether the function $j^*(\argu,\sig)$ is proper for arbitrary $\sig \in \Sdd$, i.e. we question the strength of the assumption (H\ref{as:elip}). A positive answer to this question shall be a part of the theorem that we will state below. Beforehand we give another property of functions $j$, $j^*$ that we shall utilize later: for any $\hk_1,\hk_2 \in \Hs$ and any $\xi,\sig \in\Sdd$ we have
\begin{alignat}{1}
	\label{eq:super_additiviy_j}
	j(\hk_1+\hk_2,\xi) &\geq j(\hk_1,\xi) + j(\hk_2,\xi), \\
	\label{eq:sub_additivity_j_star}
	j^*(\hk_1+\hk_2,\sig) &\leq \min\bigl\{ j^*(\hk_1,\sig) , j^*(\hk_2,\sig) \bigr\}.
\end{alignat}
The first inequality can be obtained by combining concavity and 1-homogenity of $j(\argu,\xi)$. Next we see that $j^*(\hk_1 + \hk_2,\sig) = \sup_{\zeta \in \Sdd} \bigl\{ \pairing{\zeta,\sig_+} - j(\hk_1 + \hk_2,\zeta) \bigr\}$, which, with the use of \eqref{eq:super_additiviy_j} and non-negativity of $j$, furnishes \eqref{eq:sub_additivity_j_star}.

\begin{theorem}
	\label{thm:rho_drho}
	Let $\ro:\Sdd \rightarrow \R_+$ be the real gauge function defined by \eqref{eq:jh_def} and \eqref{eq:jh_rho} and by $\dro$ denote its polar function. Then the polar function is another real gauge function satisfying  $\tilde{C}_1 \abs{\sig} \leq \dro(\sig) \leq \tilde{C}_2 \abs{\sigma}$ for some positive $\tilde{C}_1,\tilde{C}_2$ and the following statements hold:
	\begin{enumerate}[(i)]
		\item for every stress tensor $\sigma \in \Sdd$
		\begin{equation}
			\label{eq:conjugate_of_jhat}
			\min\limits_{\hk \in \Hc} j^*(\hk,\sig) = \jc(\sig) = \frac{1}{p'} \bigl(\dro(\sig) \bigr)^{p'}
		\end{equation}
		where the continuous function $\jc:\Sdd \rightarrow \R_+$ is the Fenchel conjugate of $\jh =  \max_{\hk \in \Hc}j(\hk,\argu)$;
		\item for a strain tensor $\xi \in \Sdd$ satisfying $\ro(\xi) \leq 1$, an arbitrary non-zero stress tensor $\sig \in \Sdd$ and a Hooke tensor $\breve{\hk} \in \Hc$ the following conditions are equivalent:
		\begin{enumerate}[(1)]
			\item there hold the extremality conditions:
			\begin{equation*}
				\pairing{\xi,\sig} = \dro(\sig) \qquad \text{and} \qquad \breve{\hk} \in \Hcc(\sig)
			\end{equation*}
			where we introduce a non-empty convex compact set of Hooke tensors optimally chosen for $\sig$
			\begin{equation*}
				\Hcc(\sig) :=  \biggl\{ \hk \in \Hc : j^*(\hk,\sig)= \jc(\sig) = \min\limits_{\tilde\hk \in \Hc} j^*\bigl(\tilde\hk,\sig \bigr)  \biggr\};
			\end{equation*}
			\item the constitutive law is satisfied: 
			\begin{equation}
				\label{eq:extreme_const_law}
				\frac{1}{\dro(\sig)} \, \sig \in \partial j\bigl(\breve{\hk},\xi \bigr).
			\end{equation}
		\end{enumerate}
		Moreover for each of the two conditions (1), (2) to be true it is necessary to have $\ro(\xi)=1$; 
		
		\item the following implication is true for every non-zero $\xi,\sig \in \Sdd$
		\begin{equation*}
			\pairing{\xi,\sig} = \ro(\xi) \, \dro(\sig) \qquad \Rightarrow \qquad \Hcc(\sig) \subset \Hch(\xi),
		\end{equation*}
		while in general $\Hcc(\sig)$ may be a proper subset of $\Hch(\xi)$. 
	\end{enumerate}
	\begin{proof}
		The lower and upper bounds on $\dro$ are a straightforward corollary from the analogous property for $\ro$ stated in Proposition \ref{prop:rho}. For the proof of statement (i) we fix a non-zero tensor $\sig \in \Sdd$, then directly by definition of the Fenchel transform \eqref{eq:Fenchel_conj_def} we obtain
		\begin{equation*}
			\inf\limits_{\hk \in \Hc} j^*(\hk,\sig) = \inf\limits_{\hk \in \Hc} \sup\limits_{\zeta \in \Sdd} \bigl\{ \pairing{\zeta,\sig} - j(\hk,\zeta) \bigr\}
		\end{equation*}
		thus arriving at a min-max problem of a very analogous (yet finite dimensional) form to \eqref{eq:min-max}. Again by Proposition 1 from \cite{bouchitte2007} we may swap the order of infimum and supremum and we arrive at
		\begin{equation*}
			\inf\limits_{\hk \in \Hc} j^*(\hk,\sig) = \sup\limits_{\zeta \in \Sdd} \left\{ \pairing{\zeta,\sig} - \sup\limits_{\hk \in \Hc} j(\hk,\zeta) \right\} = \sup\limits_{\zeta \in \Sdd} \biggl\{ \pairing{\zeta,\sig} - \jh(\zeta) \biggr\} = \jc(\sig)
		\end{equation*}
		or, by acknowledging that $\jh(\zeta) = \frac{1}{p} \bigl(\ro(\zeta)\bigr)^p$, we recover the well known result:
		\begin{alignat}{1}
			\label{eq:chain_sup}
			\jc(\sig)=\inf\limits_{\hk \in \Hc} j^*(\hk,\sig) &= \sup\limits_{\zeta \in \Sdd} \biggl\{ \pairing{\zeta,\sig} - \frac{1}{p} \bigl(\ro(\zeta)\bigr)^p \biggr\} = \sup\limits_{\substack{\zeta_1 \in \Sdd\\ t\geq 0}} \biggl\{ t \pairing{\zeta_1,\sig} - \frac{t^p}{p} \ : \ \ro(\zeta_1) = 1 \biggr\}\nonumber \\ 
			& = \sup\limits_{\zeta_1 \in \Sdd} \biggl\{ \frac{1}{p'}\abs{\pairing{\zeta_1,\sig}}^{p'}   \ : \ \ro(\zeta_1) \leq 1  \biggr\} = \frac{1}{p'} \bigl(\dro(\sig) \bigr)^{p'},
		\end{alignat}
		where the maximal problem with respect to $t$ were solved with $\bar{t}_{\zeta_1} = \abs{\pairing{\zeta_1,\sig}}^{p'-1}$. Since the function $\dro$ is real-valued we have actually showed that $\inf_{\hk \in \Hc} j^*(\hk,\sig)$ is finite proving that the function $j^*(\argu,\sig)$ is proper for any $\sig$ and thus, due to its convexity and l.s.c, we know that it admits its minimum on the compact set $\Hc$, hence point (i) is proved.
		
		We move on to prove statement (ii); we fix $\xi,\sig \in \Sdd$ with $\rho(\xi) \leq 1$ and $\breve{\hk} \in \Hc$; it is not restrictive to assume that $\dro(\sig) = 1$. Let us first assume that (1) holds, i.e. that $\pairing{\xi,\sig} = \dro(\sig) = 1$ and $\breve{\hk}$ is an element of the non-empty set $\Hcc(\sig)$, so that $j^*(\breve{\hk},\sig) = \jc(\sig) = \frac{1}{p'} \bigl(\dro(\sig) \bigr)^{p'}$. Since $\pairing{\xi,\sig} \geq \ro(\xi) \dro(\sig)$ we necessarily have $\rho(\xi) = 1$ together with
		\begin{equation*}
			\pairing{\xi,\sig} = \sup\limits_{\zeta_1 \in \Sdd}  \biggl\{ \pairing{\zeta_1,\sig}   \ : \ \ro(\zeta_1) \leq 1  \biggr\}
		\end{equation*}
		and, since $\bar{t}_\xi= \abs{\pairing{\xi,\sig}}^{p'-1}=1$, we infer that $\bar{t}_\xi \, \xi  = \xi $ solves all the maximization problems with respect to $\zeta$ or $\zeta_1$ in the chain \eqref{eq:chain_sup}, the first one in particular. Together with $\breve{\hk} \in \Hcc(\sig)$ this means that $(\breve{\hk},\xi)$ is a saddle point for the functional $(\hk,\zeta) \mapsto  \pairing{\zeta,\sig} - j(\hk,\zeta)$, i.e.
		\begin{alignat*}{2}
			\pairing{\xi,\sig} - j(\breve{\hk},\xi) &=  \max\limits_{\zeta \in \Sdd} \min\limits_{\hk \in \Hc} \biggl\{ \pairing{\zeta,\sig} - j(\hk,\zeta) \biggr\} = \max\limits_{\zeta \in \Sdd} \biggl\{ \pairing{\zeta,\sig} - j(\breve{\hk},\zeta) \biggr\} = j^*(\breve{\hk},\sig)\\
			 &=\min\limits_{\hk \in \Hc} \max\limits_{\zeta \in \Sdd} \biggl\{ \pairing{\zeta,\sig} - j(\hk,\zeta) \biggr\}= \min\limits_{\hk \in \Hc} \biggl\{ \pairing{\xi,\sig} - j(\hk,\xi) \biggr\} =  \pairing{\xi,\sig}  - \jh(\xi),
		\end{alignat*}
		which furnishes two equalities: $\pairing{\xi,\sig} - j(\breve{\hk},\xi) = j^*(\breve{\hk},\sig)$ and $\pairing{\xi,\sig} - j(\breve{\hk},\xi) = \pairing{\xi,\sig}  - \jh(\xi)$ from which we infer that $\sig \in \partial j(\breve{\hk},\xi)$ and, respectively, $\breve{\hk} \in \Hch(\xi)$. The former conclusion gives the implication (1) $\Rightarrow$ (2) while the latter, since $\breve{\hk}$ was arbitrary element of $\Hcc(\sig)$ and $\pairing{\xi,\sig} = \dro(\sig)$, establishes the point (iii) for the case when $\rho(\xi)=1$; then the general setting of (iii) follows by the fact that $\Hch(\xi) = \Hch(t\, \xi)$ for any $t > 0$.
		
		For the implication (2) $\Rightarrow$ (1) we assume that for the triple $\xi,\sig,\breve{\hk}$ with $\rho(\xi)\leq 1$, $\dro(\sig) = 1$, $\breve{\hk} \in \Hc$ the constitutive law \eqref{eq:extreme_const_law} is satisfied. Then, by \eqref{eq:repartition} there holds the repartition of energy: $\pairing{\xi,\sig} = p \, j(\breve{\hk},\xi)$ and $\pairing{\xi,\sig} = p' \, j^*(\breve{\hk},\sig)$. The following chain can be written down:
		\begin{equation*}
			1 = \dro(\sig) = p' \biggl( \frac{1}{p'} \bigl(\dro(\sig)\bigr)^{p'} \biggr) \leq p'\, j^*(\breve{\hk},\sig) = \pairing{\xi,\sig} = p \, j(\breve{\hk},\xi) \leq p\, \biggl( \frac{1}{p} \bigl(\ro(\xi)\bigr)^{p} \biggr) \leq 1
		\end{equation*}
		and therefore all the inequalities above are in fact equalities; in particular we have
		\begin{equation*}
			\pairing{\xi,\sig} = \dro(\sig) =  \rho(\xi) = 1, \qquad \jc(\sig) = \frac{1}{p'} \bigl(\dro(\sig)\bigr)^{p'} = j^*(\breve{\hk},\sig) \quad \Rightarrow \quad \breve{\hk} \in \Hcc(\sig),
		\end{equation*}
		which proves the implication (2) $\Rightarrow$ (1) and the "moreover part" of point (ii), concluding the proof.
	\end{proof}	
\end{theorem}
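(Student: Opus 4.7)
The proof splits naturally along statements (i)--(iii); the bounds $\tilde{C}_1\abs{\sig} \leq \dro(\sig) \leq \tilde{C}_2\abs{\sig}$ fall out immediately from Proposition \ref{prop:rho} and the definition of the polar.

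For (i) my plan is to write $\inf_{\hk \in \Hc} j^*(\hk,\sig)$ as the finite-dimensional min-max
\[
\inf_{\hk \in \Hc}\sup_{\zeta \in \Sdd}\bigl\{\pairing{\zeta,\sig} - j(\hk,\zeta)\bigr\}
\]
and swap the two operations via Proposition 1 of \cite{bouchitte2007}: the integrand is concave u.s.c.\ in $\hk$ on the compact convex set $\Hc$ (by (H\ref{as:concave})) and convex in $\zeta$ (by (H\ref{as:convex})). After the swap the inner infimum in $\hk$ turns $-j(\hk,\zeta)$ into $-\jh(\zeta)$, so the expression collapses to $\sup_\zeta\{\pairing{\zeta,\sig}-\jh(\zeta)\} = \jc(\sig)$. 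The standard $p$-homogeneous trick — substitute $\zeta = t\,\zeta_1$ with $\rho(\zeta_1)=1$ and optimize in $t\ge 0$ — then produces $\jc(\sig) = \frac{1}{p'}(\dro(\sig))^{p'}$. Since this value is finite by Proposition \ref{prop:rho}, $j^*(\argu,\sig)$ is proper on $\Hc$; being additionally convex and lower semi-continuous there (properties inherited from the concavity and upper semi-continuity of $j(\argu,\zeta)$), it attains its minimum on the compact set $\Hc$.

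For (ii) I normalize $\dro(\sig)=1$. In the direction (1)$\Rightarrow$(2), the same substitution $\zeta = t\,\zeta_1$ identifies the maximizers of $\sup_\zeta\{\pairing{\zeta,\sig}-\jh(\zeta)\}$ as exactly those $\zeta_1$ of unit $\rho$-norm achieving $\pairing{\zeta_1,\sig}=\dro(\sig)$, taken at $t=1$. The hypotheses $\pairing{\xi,\sig}=\dro(\sig)=1$ and $\rho(\xi)\leq 1$ (which together with $\pairing{\xi,\sig}\leq\rho(\xi)\dro(\sig)$ force $\rho(\xi)=1$) therefore pick out $\xi$ as such a maximizer. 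Together with $\breve{\hk}\in\Hcc(\sig)$ attaining the outer infimum, this makes $(\breve{\hk},\xi)$ a saddle point of $(\hk,\zeta)\mapsto\pairing{\zeta,\sig}-j(\hk,\zeta)$ on $\Hc\times\Sdd$. Writing the saddle identity in both orders simultaneously produces
\[
j(\breve{\hk},\xi)+j^*(\breve{\hk},\sig)=\pairing{\xi,\sig} \qquad \text{and} \qquad j(\breve{\hk},\xi)=\jh(\xi);
\]
the former is equivalent by \eqref{eq:subdiff_Fenchel} to $\sig\in\partial j(\breve{\hk},\xi)$, and the latter delivers $\breve{\hk}\in\Hch(\xi)$ — which already furnishes (iii) once one rescales $\xi$ by $\rho(\xi)$, since both $\Hch(\argu)$ and the equality $\pairing{\xi,\sig}=\rho(\xi)\dro(\sig)$ are invariant under positive rescaling of $\xi$.

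For (2)$\Rightarrow$(1), the repartition of energy \eqref{eq:repartition} converts the constitutive law into $\pairing{\xi,\sig}=p\,j(\breve{\hk},\xi)=p'\,j^*(\breve{\hk},\sig)$, and the chain
\[
1 = p'\cdot\tfrac{1}{p'}(\dro(\sig))^{p'} \leq p'\,j^*(\breve{\hk},\sig) = \pairing{\xi,\sig} = p\,j(\breve{\hk},\xi) \leq p\cdot\tfrac{1}{p}(\rho(\xi))^{p} \leq 1
\]
— where the first inequality uses $\jc = \min_{\Hc} j^*(\argu,\sig)$ from (i), the third uses $j(\breve{\hk},\argu)\leq\jh$, and the fourth uses $\rho(\xi)\leq 1$ — forces equality at every step, yielding $\pairing{\xi,\sig}=\dro(\sig)$, $\breve{\hk}\in\Hcc(\sig)$, and the necessity $\rho(\xi)=1$ in one stroke. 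The principal technical hurdle is a clean justification of the min-max swap in (i) and the careful saddle-point identification in (1)$\Rightarrow$(2); once those are in place, the rest is bookkeeping with $p$-homogeneity and Fenchel duality.
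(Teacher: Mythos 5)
Your proposal is correct and follows essentially the same route as the paper's proof: the min--max swap via Proposition 1 of \cite{bouchitte2007} and the $p$-homogeneous scaling for (i), the saddle-point identification for (1)$\Rightarrow$(2) (which simultaneously yields (iii) after rescaling), and the repartition-of-energy chain of inequalities for (2)$\Rightarrow$(1). No gaps; the only cosmetic slip is calling $j$ itself the ``integrand'' when describing the convexity/concavity hypotheses for the swap, but the mathematical content is exactly what the swap requires.
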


\begin{example}[\textbf{Anisotropic Material Design problem}]
	\label{ex:rho_drho_AMD}
	We shall compute the functions $\ro$, $\dro$ together with the sets $\Hch(\xi)$, $\Hcc(\sig)$ in the setting of $\FMD$ problem mostly discussed in the literature: the \textit{Anisotropic Material Design} (AMD) setting for the linearly elastic body, more precisely we choose
	\begin{equation}
		\label{eq:AMD_setting}
		\Hs = \Hf, \qquad j(\hk,\xi) =   \frac{1}{2} \pairing{\hk \,\xi , \xi} \quad (p=2), \qquad \cost(\hk) = \tr\, \hk,
	\end{equation}
	i.e. $\Hs$ contains all possible Hooke tensors. Upon recalling that here $\Hc = \left\{\hk \in \Hf \ : \ \tr \, \hk \leq 1  \right\}$ for each $\hk \in \Hc$ we may write down an estimate
	\begin{equation}
		\label{eq:est_upper_AMD}
		j(\hk,\xi) = \frac{1}{2} \pairing{\hk \,\xi,\xi} \leq \frac{1}{2} \left(\max\limits_{i \in \{1,\ldots,N(d)\}} \lambda_i(\hk) \right) \abs{\xi}^2 \leq \frac{1}{2}\, \bigl(\tr\,\hk \bigr)\, \abs{\xi}^2 \leq \frac{1}{2} \, \abs{\xi}^2
	\end{equation}
	($\abs{\xi}= \pairing{\xi,\xi}^{1/2}$ denotes the Euclidean norm of $\xi$) and therefore $\frac{1}{2}\bigl(\ro(\xi) \bigr)^2 = \jh(\xi) \leq \frac{1}{2} \, \abs{\xi}^2$. On the other hand, we may define for a fixed non-zero $\xi\in \Sdd$
	\begin{equation}
		\label{eq:H_xi_AMD}
		\bar{\hk}_\xi =  \frac{\xi}{\abs{\xi}} \otimes \frac{\xi}{\abs{\xi}}
	\end{equation}
	that is a tensor with only one non-zero eigenvalue being equal to 1 and the corresponding unit eigenvector $\xi / \abs{\xi}$ (in fact a symmetric tensor); obviously we have $\tr \, \bar{\hk}_\xi = 1$ and $j(\bar{\hk}_\xi,\xi) = \frac{1}{2} \pairing{\bar{\hk}_\xi \,\xi,\xi} = \frac{1}{2} \, \abs{\xi}^2$,
	which shows that in fact $\jh(\xi) = j(\bar{\hk}_\xi,\xi) = \frac{1}{2} \, \abs{\xi}^2$ and hence
	\begin{equation*}
		\ro(\xi) = \abs{\xi}, \qquad \frac{\xi}{\abs{\xi}} \otimes \frac{\xi}{\abs{\xi}} \in \Hch(\xi).
	\end{equation*}
	It is easy to observe that for a non-zero $\xi$ the tensor $\bar{\hk}_\xi$ is the unique element of the set $\Hch(\xi)$. Indeed, if $\xi /\abs{\xi}$ is kept as one of the eigenvectors of a chosen $\hk \in \Hc$ and $\lambda_1(\hk)$ is the corresponding eigenvalue, then $\hk \neq \bar{\hk}_\xi$ means that $\lambda_1(\hk) <1$ yielding $j(\hk,\xi) = \frac{1}{2} \langle\hk,\bar{\hk}_\xi \rangle \abs{\xi}^2< \frac{1}{2}\abs{\xi}^2$. One may easily verify that we obtain a similar result whenever $\xi /\abs{\xi}$ is not one of the eigenvectors of $\hk$.
	
	It is well known that the polar $\dro$ to the Euclidean norm $\ro = \abs{\argu}$ is again this very norm. Furthermore it is obvious that
	\begin{equation*}
		\pairing{\xi,\sig} = \ro(\xi)\, \dro(\sig) = \abs{\xi} \, \abs{\sig} \qquad \Leftrightarrow \qquad t_1 \xi = t_2 \sig \ \text{ for some }\ t_1,t_2 \geq 0.
	\end{equation*}
	Next, since $\Hch(\xi)$ is a singleton for non-zero $\xi$, for non-zero $\sig$ the point (iii) of Theorem \ref{thm:rho_drho} furnishes:
	\begin{equation*}
		\xi_\sig = t \,\sigma \ \text{ for some } \ t>0 \qquad \Rightarrow \qquad \Hcc(\sig) = \Hch(\xi_\sig) = \left\{ \frac{\xi_\sig}{\abs{\xi_\sig}} \otimes \frac{\xi_\sig}{\abs{\xi_\sig}}\right\} = \left\{ \frac{\sig}{\abs{\sig}} \otimes \frac{\sig}{\abs{\sig}}\right\}.
	\end{equation*}
	Therefore we have obtained
	\begin{equation}
		\label{eq:Hcc_char_AMD}
		\dro(\sig) = \abs{\sig}, \qquad \Hcc(\sig) =  \left\{ \frac{\sig}{\abs{\sig}} \otimes \frac{\sig}{\abs{\sig}}\right\}.
	\end{equation}
	The latter results were given in \cite{czarnecki2012} and were obtained by solving the problem  $\min_{\hk \in \Hc} j^*(\hk,\sig)$ directly.
\end{example}

\begin{remark}
	It is worth noting that in general neither of the sets $\Hch(\xi)$ or even $\Hcc(\sig)$ is a singleton, see Examples \ref{ex:rho_drho_FibMD} and \ref{ex:rho_drho_IMD}  in Section \ref{sec:example_material_settings}.
\end{remark}

\section{The link between solutions of the Free Material Design problem and\\ the Linear Constrained Problem}

\label{sec:FMD_LCP}

In Section \ref{sec:from_FMD_to_LCP} we have expressed the value of minimum compliance $\Cmin$ by value $Z$ being the supremum and infimum in problems $\Prob$ and $\dProb$ respectively. Next we ought to show how solution of the original Free Material Design problem, the optimal Hooke field $\lambda$ in particular, may be recovered from the solution of much simpler Linear Constrained Problem, being the mutually dual pair  $\Prob$, $\dProb$ exactly. Since the form of (LCP) is identical to the one discussed in the paper \cite{bouchitte2007} the present section may be recognized as a variant of the argument in the former work: herein we must additionally retrieve the optimal Hooke function $\hf$. Since, in contrast to $\dProb$, the problem $\Prob$ in general does not attain a solution, the condition on smoothness of $u$ in $\Prob$ must be relaxed. This was already done in \cite{bouchitte2001} or \cite{bouchitte2007} therefore we repeat the result by only sketching the proof of the compactness result:

\begin{proposition}
	Let $\Omega$ be a bounded domain with Lipschitz boundary. Then the set
	\begin{equation}
		\label{eq:U1_def}
		\U_1 := \left\{ u \in \Dd : \ro\bigl(e(u)\bigr) \leq \nolinebreak 1 \text{ in } \Omega \right\}
	\end{equation}
	is precompact in the quotient space $C(\Ob;\Rd) / \U_0$ where $\mathcal{U}_0 = \left \{ u \in C^1\bigl(\Ob;\Rd\bigr) \, : \, e(u) = 0 \right\}$.
	\begin{proof}[Outline of the proof]
		Using Korn's inequality we infer that, up to a function in $\U_0$ (a rigid body displacement function), the set $\U_1$ is a bounded subset of $W^{1,q}(\Omega)$ for any $1 \leq q <\infty$. By taking any $q > d$ we employ Morrey's embedding theorem (which uses Lipschitz regularity of the boundary $\partial \Omega$) to conclude that the H\"{o}lder seminorm $\abs{\argu}_{C^{0,\alpha}}$ is uniformly bounded in $\U_1$ for any exponent $\alpha$ ranging in $(0,1)$ and the thesis follows.
	\end{proof}
\end{proposition}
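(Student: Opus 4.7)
The plan is to follow the outlined three-step scheme and fill in the reasoning that links the gauge condition $\ro\bigl(e(u)\bigr) \leq 1$ to the uniform compactness assertion modulo the finite dimensional subspace $\U_0$. The first observation is that, thanks to the lower bound from Proposition \ref{prop:rho}, the pointwise inequality $\ro\bigl(e(u)\bigr)\leq 1$ in $\Omega$ already forces $\abs{e(u)(x)}\leq 1/C_1$ for every $x\in\Omega$, so the symmetrized gradients $e(u)$ for $u\in\U_1$ form a uniformly bounded subset of $L^\infty(\Omega;\Sdd)$, and in particular of $L^q(\Omega;\Sdd)$ for every $q\in[1,\infty)$.

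Next I would apply Korn's inequality in its quotient form on the Lipschitz domain $\Omega$: for every $q\in(1,\infty)$ there is a constant $K_q>0$ such that for any $u\in W^{1,q}(\Omega;\Rd)$ one can subtract a rigid body displacement $u_0\in\U_0$ to obtain
\begin{equation*}
\norm{u-u_0}_{W^{1,q}(\Omega;\Rd)} \leq K_q \norm{e(u)}_{L^q(\Omega;\Sdd)}.
\end{equation*}
Combined with the $L^\infty$-bound above, this says that the image of $\U_1$ in the quotient $W^{1,q}(\Omega;\Rd)/\U_0$ is bounded for every $q<\infty$.

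To upgrade boundedness to compactness in $C(\Ob;\Rd)/\U_0$ I would now fix $q>d$ and invoke Morrey's embedding (which relies precisely on the Lipschitz regularity of $\partial\Omega$): $W^{1,q}(\Omega;\Rd)$ embeds continuously into $C^{0,\alpha}(\Ob;\Rd)$ with $\alpha=1-d/q$. Consequently, after the Korn-driven choice of representative $u-u_0\in\U_1-\U_0$, the H\"older seminorm $\abs{u-u_0}_{C^{0,\alpha}}$ is uniformly bounded; since in addition $\norm{u-u_0}_{L^q}$ is uniformly bounded, the full $C^{0,\alpha}$-norm is uniformly controlled. The Arzel\`a--Ascoli theorem (equivalently, the compact embedding $C^{0,\alpha}(\Ob;\Rd)\hookrightarrow C(\Ob;\Rd)$) then yields precompactness of the image of $\U_1$ in $C(\Ob;\Rd)/\U_0$.

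The main technical obstacle, and the only place where the geometry of $\Omega$ enters nontrivially, is the simultaneous use of Korn's inequality and Morrey's embedding: both hinge on the Lipschitz regularity of $\partial\Omega$, and the quotient by $\U_0$ is essential because $e$ has a nontrivial kernel, so the strategy really is to prove compactness up to rigid motions rather than in any ambient norm. Apart from this, the passage from the anisotropic condition $\ro\bigl(e(u)\bigr)\leq 1$ to an $L^\infty$-bound on $\abs{e(u)}$ is cheap and is precisely where Proposition \ref{prop:rho} does its work; without the two-sided comparison with the Euclidean norm, the argument would require handling degenerate gauges $\ro$ and would become considerably more delicate.
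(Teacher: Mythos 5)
Your proposal is correct and follows essentially the same route as the paper's own outline: Korn's inequality modulo rigid motions to get $W^{1,q}$-boundedness, Morrey's embedding for $q>d$ to obtain a uniform H\"{o}lder bound, and Arzel\`a--Ascoli to conclude. The extra detail you supply — invoking the lower bound $C_1\abs{\xi}\leq\ro(\xi)$ from Proposition \ref{prop:rho} to convert the gauge constraint into an $L^\infty$ bound on $e(u)$ — is exactly the implicit first step of the paper's argument, correctly made explicit.
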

The load $\Fl$, throughout assumed to be balanced, satisfies the condition $\int\pairing{u_0,\Fl} = 0$ for any $u_0 \in \U_0$. The results above justifies the following relaxation of the problem $\Prob$:
\begin{equation}
	\label{eq:relProb}
	\relProb \qquad \qquad Z = \max \biggl\{ \int \pairing{u,F} \ : \ u \in \Uc \biggr\} \qquad \qquad
\end{equation} 
where $\Uc$ stands for the closure of $\U_1$ in the topology of uniform convergence. The problem $\relProb$ attains a solution that is unique up to a rigid body displacement function $u_0 \in \U_0$. It is worth noting that each $u \in \Uc$ is $\Leb^d$-a.e. differentiable with $e(u) \in L^\infty(\Omega;\Sdd)$, yet still there are $u \notin \mathrm{Lip}(\Omega;\Rd)$ belonging to $\Uc$, which is possible due to the lack of Korn's inequality for $q = \infty$, cf. \cite{bouchitte2008} for details.

It is left to relax the another displacement-based problem appearing in this work, i.e. the one of elasticity \eqref{eq:compliance_def}. After \cite{bouchitte2007} we shall say that for the Hooke field $\lambda \in \MesHH$ the function $\check{u} \in C(\Ob;\Rd)$ is a relaxed solution of \eqref{eq:compliance_def} when $\Comp(\lambda)$ admits a maximizing sequence in $u_n \in \Dd$ with $u_n \rightrightarrows u$ in $\Ob$ (uniformly) and $\ro\bigl(e(u_n)\bigr) \leq (Z/\Totc)^{p'/p}$ in $\Ob$. Simple adaptation of the result in point (ii) of Proposition 2 in \cite{bouchitte2007} allows to infer that such a maximizing sequence exists provided that $\lambda$ is the optimal solution for $\FMD$, which justifies the definition of the relaxed solution.

For any force flux $\TAU \in \MesT$  by $\dro(\TAU)$ we understand a positive Radon measure that for any Borel set $B \subset \Rd$ gives $\dro(\TAU)\,(B) := \int_B \dro(\TAU)$, where the integral is indented in the sense of convex functional on measures (see \eqref{eq:duality_of_integral}). Since $\TAU$ is absolutely continuous with respect to $\dro(\TAU)$ the Radon-Nikodym theorem gives $\TAU = \sig \,\mu$ where $\mu = \dro(\TAU)$ and $\sig = \frac{d\TAU}{d\mu} \in  L^1_\mu(\Ob;\Sdd)$; obviously there must hold $\dro(\sig) = 1\ $ $\mu$-a.e. so in fact $\sig \in  L^\infty_\mu(\Ob;\Sdd)$.

In \cite{bouchitte2007} the authors defined solution of the Linear Constrained Problem as a triple $u,\mu,\sig$ where $u$ solves $\relProb$ and $\tau = \sig\mu$ solves $\dProb$ with $\dro(\sig)=1\ $ $\mu$-a.e. For our purpose, i.e. in order to recover the full solution of the Free Material Design problem, we must speak of optimal quadruples $u,\mu,\sig,\hf$ where for $\mu$-a.e. $x$ the Hooke tensor $\hf(x) \in \Hc$ (of unit $\cost$-cost) is optimally chosen for $\sig(x)$, namely $\hf(x) \in \Hcc\bigl(\sig(x)\bigr)$. Beforehand we must make sure  there always exists such a function $\hf$ that is $\mu$-measurable:

\begin{lemma}
	\label{lem:measurable_selection}
	For a given Radon measure $\mu \in \Mes_+(\Ob)$ let $\gamma: \Ob \rightarrow \Sdd$ be a $\mu$-measurable function. We consider a closed and convex-valued multifunction $\Gamma_\gamma: \Ob \rightarrow 2^\Hc\backslash \varnothing$ as below
	\begin{equation*}
	\Gamma_\gamma(x) := \Hcc\bigl(\gamma(x) \bigr) = \left\{ \hk \in \Hc : \jc\bigl(\gamma(x)\bigr) = j^*\bigl(\hk,\gamma(x)\bigr) \right\}.
	\end{equation*}
	Then there exists a $\mu$-measurable selection $\hf_\gamma: \Ob\rightarrow \Hc$ of the multifunction  $\Gamma_\gamma$, namely
	\begin{equation*}
	\hf_\gamma (x) \in \Hcc\bigl(\gamma(x)\bigr) \qquad  \text{for } \mu\text{-a.e. } x \in \Ob.
	\end{equation*}
	\begin{proof}
		It suffices to prove that the multifunction $\sig \mapsto \Hcc(\sig)$ is upper semi-continuous on $\Sdd$. Then it is also a measurable multifunction and thus there exsits a Borel measurable selection $\bar{\hk}:\Sdd \rightarrow \Hc$, i.e. $\bar{\hk}(\sig) \in \Hcc(\sig)$ for every $\sig \in \Sdd$, see Corollary III.3 and Theorem III.6 in \cite{castaing1977}. Then $\hf_\gamma : = \bar{\hk} \circ \gamma: \Ob \rightarrow \Hc$ is $\mu$-measurable as a composition of Borel measurable and $\mu$-measurable functions.
		
		By definition of the upper semi-continuity of multifunctions we must show that for any open set $U \subset \Hc$ (open in the relative topology of the compact set $\Hc\subset \LSdd$) the set
		\begin{equation*}
			V =  \bigl\{ \sig \in \Sdd \ : \ \Hcc(\sig) \subset  U \bigr\}
		\end{equation*}
		is open in $\Sdd$. Below the set $U$ is fixed; assuming that $V$ is non-empty we choose arbitrary $\breve{\sig} \in V$. We must show that there exists $\delta >0$ such that $B(\breve{\sig},\delta) \subset V$, which may be rewritten as
		\begin{equation}
		\label{eq:lemma_proof_thesis}
		\text{for every } \sig \in B(\breve{\sig},\delta) \text{ there holds:} \qquad  j^*(\hk,\sig) > \jc(\sig) \quad \forall\, \hk \in \Hc \backslash U.
		\end{equation} 
		
		We start proving \eqref{eq:lemma_proof_thesis} by observing that there exists $\eps>0$ such that
		\begin{equation}
		\label{eq:3eps}
		\inf_{\tilde\hk \in \Hc \backslash U} j^*(\tilde\hk,\breve{\sig}) > \jc(\breve{\sig}) + 3 \eps.
		\end{equation}
		Indeed, the compact set $\Hcc(\breve{\sig})$ is a subset of the open set $U$ and therefore lower semi-continuity of $j^*(\argu,\breve{\sig})$ implies that $\inf_{\tilde\hk \in \Hc \backslash U} j^*(\tilde\hk,\breve{\sig})$ must be greater than $ \jc(\breve{\sig}) = \min_{\tilde\hk \in \Hc} j^*(\tilde\hk,\breve{\sig})$ because otherwise the minimum would be attained in the compact set $\Hc \backslash U$, which is in contradiction with $\Hcc(\breve{\sig}) \subset  U$.
		
		For a fixed $\eps$ satisfying \eqref{eq:3eps} we shall choose $\delta>0$ so that for every $\sig \in B(\breve{\sig},\delta)$ there hold
		\begin{equation}
		\label{eq:cont_jc}
		\abs{\,\jc(\sig) - \jc(\breve{\sig})} < \eps,
		\end{equation}
		\begin{equation}
		\label{eq:uniform_l.s.c.}
		j^*(\hk,\sig) \geq \inf_{\tilde\hk \in \Hc \backslash U} j^*(\tilde\hk,\breve{\sig}) - \eps \qquad \forall\, \hk \in \Hc \backslash U.
		\end{equation}
		Possibility of choosing $\delta = \delta_1$ so that \eqref{eq:cont_jc} holds follows from continuity of $\jc$ (see point (i) of Theorem \ref{thm:rho_drho}), while estimate \eqref{eq:uniform_l.s.c.}, being uniform in $\Hc \backslash U$, is more involved. Since $j^*: \Hc \times \Sdd\rightarrow \Rb$ is lower semi-continuous (jointly in both arguments), for every $\tilde{\hk} \in \Hc$ we may pick $\tilde{\delta}=\tilde{\delta}(\tilde{\hk})>0$ such that
		\begin{equation*}
			j^*(\hk,\sig) \geq j^*(\tilde{\hk},\breve{\sig}) - \eps \qquad \forall\, (\hk,\sig) \in B\bigl(\tilde{\hk},\tilde{\delta}(\tilde{\hk})\bigr) \times B\bigl(\breve{\sig},\tilde{\delta}(\tilde{\hk})\bigr).
		\end{equation*}
		Since $\Hc\backslash U$ is compact one may choose its finite subset $\{\tilde{\hk}_i\}_{i=1}^m$ such that $\Hc \backslash U \subset \bigcup_{i=1}^m B\bigl(\tilde{\hk}_i,\tilde{\delta}(\tilde{\hk}_i) \bigr)$. By putting $\delta_2 = \min_{i=1}^m \tilde{\delta}(\tilde{\hk}_i)$ we find that
		\begin{equation*}
			j^*(\hk,\sig) \geq j^*(\tilde{\hk}_i,\breve{\sig}) - \eps \qquad \forall\, (\hk,\sig) \in B\bigl(\tilde{\hk}_i,\tilde{\delta}(\tilde{\hk}_i)\bigr) \times B\bigl(\breve{\sig},\delta_2\bigr) \qquad \forall\, i\in\{1,\ldots,m\},
		\end{equation*}
		and thus, since the finite family of balls covers $\Hc \backslash U$
		\begin{equation*}
			j^*(\hk,\sig) \geq \min\limits_{ i\in\{1,\ldots,m\}}j^*(\tilde{\hk}_i,\breve{\sig}) - \eps \qquad \forall\, (\hk,\sig) \in \bigl(\Hc\backslash U\bigr) \times B\bigl(\breve{\sig},\delta_2\bigr),
		\end{equation*}
		which, by the fact that $\tilde{\hk}_i \in \Hc\backslash U$ for all $i$, furnishes \eqref{eq:uniform_l.s.c.} for any $\sig \in B(\breve{\sig},\delta_2)$.
		
		We fix $\delta = \min\{\delta_1,\delta_2\}$ to have \eqref{eq:cont_jc} and \eqref{eq:uniform_l.s.c.} all together, which, combined with \eqref{eq:3eps}, give for any $\sig \in B(\breve{\sig},\delta)$ and any $\hk \in \Hc \backslash U$
		\begin{equation*}
		j^*(\hk,\sig) \geq \inf_{\tilde\hk \in \Hc \backslash U} j^*(\tilde\hk,\breve{\sig}) - \eps > \left(\jc(\breve{\sig}) + 3 \eps \right)  -\eps = \jc(\breve{\sig})+2\eps > \left(\jc(\sig) - \eps \right) + 2 \eps = \jc(\sig) +\eps,
		\end{equation*}
		which establishes \eqref{eq:lemma_proof_thesis} and thus concludes the proof.
	\end{proof}
\end{lemma}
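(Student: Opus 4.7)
My plan is to reduce the problem to producing a Borel measurable selection of the multifunction $\Sdd \ni \sig \mapsto \Hcc(\sig) \subset \Hc$ on the target space; once such a Borel map $\bar{\hk}:\Sdd \to \Hc$ with $\bar{\hk}(\sig) \in \Hcc(\sig)$ is in hand, the desired $\mu$-measurable selection is simply the composition $\hf_\gamma := \bar{\hk} \circ \gamma$, which is $\mu$-measurable because $\gamma$ is $\mu$-measurable and $\bar{\hk}$ is Borel. To invoke a standard measurable selection theorem (Kuratowski--Ryll-Nardzewski type, in the formulation of Castaing--Valadier), I need: (i) $\Hcc(\sig)$ is non-empty and closed for every $\sig$, which is immediate from Theorem \ref{thm:rho_drho}(i) together with the lower semi-continuity and convexity of $j^*(\argu,\sig)$ on the compact $\Hc$; and (ii) measurability of the multifunction $\sig \mapsto \Hcc(\sig)$.

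For (ii) I would aim at the stronger property that $\Hcc(\argu)$ is upper semi-continuous as a multifunction into $\Hc$; this is the main technical obstacle. Fix an open $U \subset \Hc$ and a point $\breve\sig$ with $\Hcc(\breve\sig) \subset U$; I need $\delta > 0$ such that $\Hcc(\sig) \subset U$ for every $\sig \in B(\breve\sig,\delta)$. The starting observation is a gap estimate: since $\Hcc(\breve\sig)$ is the minimizer set of the lower semi-continuous function $j^*(\argu,\breve\sig)$ on the compact set $\Hc$ and is contained in the open set $U$, the infimum of $j^*(\argu,\breve\sig)$ on the compact leftover $\Hc \setminus U$ must strictly exceed $\jc(\breve\sig)$, so one gets some $\eps>0$ with
\[
\inf_{\hk \in \Hc \setminus U} j^*(\hk,\breve\sig) \,>\, \jc(\breve\sig) + 3\eps.
\]

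Then I would combine two ingredients on a small ball around $\breve\sig$: first, continuity of $\jc$ (Theorem \ref{thm:rho_drho}(i)) gives $|\jc(\sig) - \jc(\breve\sig)| < \eps$ on some $B(\breve\sig,\delta_1)$; second, the joint lower semi-continuity of $j^*$ on $\LSdd \times \Sdd$ (already established in the text) must be upgraded to a lower semi-continuity estimate that is \emph{uniform in $\hk \in \Hc \setminus U$}. This is the delicate step, and I would handle it by covering the compact set $\Hc \setminus U$ with finitely many balls $B(\tilde\hk_i,\tilde\delta_i)$ on each of which the local joint lower semi-continuity yields $j^*(\hk,\sig) \geq j^*(\tilde\hk_i,\breve\sig) - \eps$ whenever $\sig \in B(\breve\sig,\tilde\delta_i)$, and then take $\delta_2$ to be the minimum of the finitely many radii.

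Setting $\delta = \min\{\delta_1,\delta_2\}$, I would chain the estimates for $\sig \in B(\breve\sig,\delta)$ and any $\hk \in \Hc \setminus U$ as
\[
j^*(\hk,\sig) \,\geq\, \inf_{\tilde\hk \in \Hc \setminus U} j^*(\tilde\hk,\breve\sig) - \eps \,>\, \jc(\breve\sig) + 2\eps \,>\, \jc(\sig) + \eps,
\]
so $\hk \notin \Hcc(\sig)$, which gives $\Hcc(\sig) \subset U$ and proves upper semi-continuity. The rest of the proof is soft: upper semi-continuity implies measurability of the multifunction, the Castaing--Valadier selection theorem yields the Borel selection $\bar{\hk}$, and composing with $\gamma$ finishes the argument. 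The only real subtlety throughout is the finite-covering upgrade of pointwise lower semi-continuity of $j^*$ to a uniform lower bound on $\Hc \setminus U$; everything else is a bookkeeping assembly of facts already catalogued in Theorem \ref{thm:rho_drho} and Proposition \ref{prop:usc_j}.
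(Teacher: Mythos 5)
Your proposal follows the paper's own argument essentially step for step: the reduction to a Borel selection of $\sig \mapsto \Hcc(\sig)$ via Castaing--Valadier, the upper semi-continuity proof through the gap estimate $\inf_{\hk \in \Hc \setminus U} j^*(\hk,\breve\sig) > \jc(\breve\sig) + 3\eps$, the continuity of $\jc$, and the finite-covering upgrade of the joint lower semi-continuity of $j^*$ to a bound uniform on the compact set $\Hc \setminus U$. The argument is correct and coincides with the one in the text.
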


The definition of a quadruple solving (LCP) may readily be given:

\begin{definition}
	\label{def:LCP_solution}
	By a solution of (LCP) we will understand a quadruple: $\hat{u}\in C(\Ob;\Rd),\ \hat{\mu} \in \Mes_+(\Ob),\ \hat{\sig} \in L^\infty_{\hat{\mu}}(\Ob;\Sdd)$ and $\hat{\hf} \in L^\infty_{\hat{\mu}}(\Ob;\Hs)$ such that:
	$\hat{u}$ solves $\relProb$; $\hat{\TAU} = \hat{\sig} \hat{\mu} \in \MesT$ solves $\dProb$; $\dro(\hat{\sig}) = \nolinebreak 1\ $ $\hat{\mu}$-a.e.; $\hat{\hf}$ is any measurable selection of the multifunction $x \mapsto \Hcc\bigl( \hat{\sig}(x) \bigr)$ which exists by virtue of Lemma \ref{lem:measurable_selection}.
\end{definition}
Then we define a solution of the Free Material Design problem, yet, apart from the Hooke field $\lambda$ being the design variable, we also speak of the stress and the displacement function in the optimal body:
\begin{definition}
	\label{def:FMD_solution}
	By a solution of (FMD) we will understand a quadruple: $\check{u}\in C(\Ob;\Rd),\ \check{\mu} \in \Mes_+(\Ob),\ \check{\sig} \in L^p_{\check\mu}(\Ob;\Sdd)$ and $\check{\hf} \in L^\infty_{\check{\mu}}(\Ob;\Hs)$ such that:
	$\check{\lambda} = \check{\hf} \check{\mu} \in \MesHH$ solves the compliance minimization problem $\Cmin$ with $\cost(\check{\hf}) = 1\ $ $\check\mu$-a.e.; $\check{\sig}$ solves the stress-based elasticity problem \eqref{eq:dual_comp} for $\lambda = \check{\lambda}$; $\check{u}$ is a relaxed solution of the displacement based elasticity problem \eqref{eq:compliance_def} for $\lambda = \check{\lambda}$.
\end{definition}

We give a theorem that links the two solutions defined above:

\begin{theorem}
	\label{thm:FMD_LCP}
	Let us choose a quadruple $\hat{u}\in C(\Ob;\Rd), \hat{\mu} \in \Mes_+(\Ob), \hat{\sig} \in L^1_{\hat{\mu}}(\Ob;\Sdd)$ and $\hat{\hf} \in L^1_{\hat{\mu}}(\Ob;\Hs)$ and define
	\begin{equation}
		\label{eq:link_FMD_LCP}
		\check{\hf} = \hat{\hf}, \qquad \check{\mu} = \frac{\Totc}{Z}\, \hat{\mu}, \qquad \check{\sig} = \frac{Z}{\Totc}\, \hat{\sig}, \qquad \check{u} = \left(\frac{Z}{\Totc}\right)^{p'/p} \hat{u}.
	\end{equation}
	Then the quadruple $\hat{u},\hat{\mu},\hat{\hf},\hat{\sig}$ is a solution of (LCP) if and only if the quadruple $\check{u},\check{\mu},\check{\hf},\check{\sig}$ is a solution of (FMD) problem.
\end{theorem}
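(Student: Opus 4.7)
The rescaling \eqref{eq:link_FMD_LCP} is an explicit bijection between quadruples, so it suffices to prove one implication, say that an (LCP) solution gives an (FMD) solution; the converse follows by applying the same computations to the inverse scaling. Granting this, I will verify the four bullet items of Definition \ref{def:FMD_solution} one after the other: admissibility of $\check{\lambda}$, the value of $\Comp(\check{\lambda})$, optimality of $\check{\sigma}$ in the stress problem \eqref{eq:dual_comp}, and finally the relaxed character of $\check{u}$.

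\emph{Admissibility and the stress formulation.} I would first observe that $\hat{\mu}(\Ob) = \int \dro(\hat{\TAU}) = Z$ from \eqref{eq:dProb}, which gives $\int c(\check{\lambda}) = \int c(\check{\hf})\,d\check{\mu} = \check{\mu}(\Ob) = (\Totc/Z)\,\hat{\mu}(\Ob) = \Totc$. Here $c(\hat{\hf}) = 1$ $\hat{\mu}$-a.e.\ follows from a short homogeneity computation: combining (H\ref{as:p-hom}) and (H\ref{as:1-hom}), one finds that $j^{*}(t\hk,\sig) = t^{-p'/p}\,j^{*}(\hk,\sig)$, so for $\hat{\sigma}\neq 0$ the minimum of $j^{*}(\cdot,\hat{\sigma})$ on $\Hc$ is attained on the boundary $\{c=1\}$. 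Next, since $\check{\sigma}\check{\mu} = \hat{\sigma}\hat{\mu} = \hat{\TAU}$ and $-\DIV\hat{\TAU}=F$, the field $\check{\sigma}$ is admissible in \eqref{eq:dual_comp}; hence
\[
\Comp(\check{\lambda}) \leq \int j^{*}(\check{\hf},\check{\sigma})\,d\check{\mu}.
\]
Using $p'$-homogeneity of $j^{*}$ in the second variable and the defining property $\hat{\hf}\in\Hcc(\hat{\sigma})$ together with Theorem \ref{thm:rho_drho}(i), one has $j^{*}(\hat{\hf},\hat{\sigma}) = \jc(\hat{\sigma}) = \tfrac{1}{p'}(\dro(\hat{\sigma}))^{p'} = \tfrac{1}{p'}$ $\hat{\mu}$-a.e., and the integral evaluates to $(Z/\Totc)^{p'-1}\cdot Z/p' = Z^{\,p'}/(p'\,\Totc^{p'-1}) = \Cmin$. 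Since $\check{\lambda}$ is admissible and realises the minimum compliance, it is optimal in $\FMD$, and the Fenchel bound above is saturated, so $\check{\sigma}$ solves \eqref{eq:dual_comp} with $\lambda = \check{\lambda}$.

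\emph{The relaxed displacement solution.} I expect this to be the delicate step. Since $\hat{u}\in\Uc$, pick $v_n \in \U_1$ with $v_n \rightrightarrows \hat{u}$ on $\Ob$ and set $u_n := (Z/\Totc)^{p'/p} v_n \in \Dd$. Then $u_n \rightrightarrows \check{u}$ and $\rho(e(u_n))\leq (Z/\Totc)^{p'/p}$, which matches the bound required by the definition of a relaxed solution stated after \eqref{eq:relProb}. The upper estimate
\[
\int \pairing{u_n,F} - J_{\check{\lambda}}\bigl(e(u_n)\bigr) \leq \Cmin
\]
follows from pointwise Fenchel's inequality applied to $\check{\hf},\check{\sigma}$ and the virtual work identity $\int \pairing{e(u_n),\check{\sigma}}\,d\check{\mu} = \int \pairing{u_n,\hat{\TAU}} = \int \pairing{u_n,F}$. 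For the matching lower bound I would use two elementary limits: $\int \pairing{u_n,F} \to (Z/\Totc)^{p'/p} Z = p'\,\Cmin$ by uniform convergence, and the envelope estimate
\[
J_{\check{\lambda}}\bigl(e(u_n)\bigr) \leq \int \jh\bigl(e(u_n)\bigr)\,d\check{\mu} = \int \tfrac{1}{p}\bigl(\rho(e(u_n))\bigr)^{p}\,d\check{\mu} \leq \tfrac{1}{p}(Z/\Totc)^{p'}\,\Totc = (p'-1)\,\Cmin.
\]
Subtracting gives $\liminf \geq p'\Cmin - (p'-1)\Cmin = \Cmin$, so the sequence is maximising in \eqref{eq:compliance_def} and $\check{u}$ is a relaxed solution.

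\emph{Main obstacle.} Everything except the last step is a direct homogeneity-plus-rescaling accounting. The genuine difficulty is the relaxed displacement part: one must control a convex, non-lower-semicontinuous strain energy along a sequence that is only known to converge uniformly, with no compactness in strains. The crucial observation is that the pointwise envelope $j(\check{\hf},\cdot)\leq \jh = \frac{1}{p}\rho^{p}$ converts the strain energy into a quantity depending only on $\rho(e(u_n))$, for which the uniform constraint inherited from $\U_1$ yields exactly the sharp bound $(p'-1)\Cmin$ needed to close the inequality against the linear work term.
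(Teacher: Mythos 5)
Your forward implication (an (LCP) solution yields an (FMD) solution) is correct and follows essentially the paper's own route: the cost accounting via $\hat{\mu}(\Ob)=\int\dro(\hat{\TAU})=Z$, the stress-based bound $\Comp(\check{\lambda})\leq\int j^*(\check{\hf},\check{\sig})\,d\check{\mu}=\Cmin$ using $\hat{\hf}\in\Hcc(\hat{\sig})$ together with Theorem \ref{thm:rho_drho}(i), and the maximizing-sequence argument closed by the envelope bound $j(\check{\hf},\argu)\leq\jh=\frac{1}{p}\rho^{\,p}$ are exactly the steps of the paper. Your justification that $\cost(\hat{\hf})=1$ $\hat{\mu}$-a.e., via the homogeneity identity $j^*(t\hk,\sig)=t^{1-p'}j^*(\hk,\sig)$, is a welcome elaboration of a point the paper only asserts.

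The genuine gap is the opening claim that, because \eqref{eq:link_FMD_LCP} is a bijection on quadruples, the converse ``follows by applying the same computations to the inverse scaling.'' It does not. Writing $T$ for the rescaling map, your forward argument establishes only the inclusion of $T(\text{solutions of (LCP)})$ in the set of solutions of (FMD); the converse implication is the reverse inclusion, and since neither solution set is a singleton (displacements are determined only up to $\U_0$, $\hf$ only up to a measurable selection, and $\mu$ need not be unique), bijectivity of $T$ gives you nothing. Concretely, starting from an (FMD) solution you must \emph{prove} the facts that in the forward direction you were allowed to \emph{assume}: that $\dro(\check{\sig})$ is $\check{\mu}$-a.e.\ constant, that $\int d\check{\mu}=\Totc$ (the cost constraint is only an inequality), that $\check{\sig}\check{\mu}$ is optimal in $\dProb$, that $\check{\hf}(x)\in\Hcc\bigl(\check{\sig}(x)\bigr)$ a.e., and that the rescaled $\hat{u}$ attains the value $Z$ in $\relProb$. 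The paper extracts the first four from the equality cases in the chain $\Cmin=\int j^*(\check{\hf},\check{\sig})\,d\check{\mu}\geq\int\jc(\check{\sig})\,d\check{\mu}\geq\frac{1}{p'\,\Totc^{\,p'/p}}\bigl(\int\dro(\check{\sig})\,d\check{\mu}\bigr)^{p'}\geq\Cmin$, the middle inequality being H\"{o}lder's with its equality conditions, and obtains the last one from the repartition of energy $\int\pairing{\check{u},F}=p'\,\Comp(\check{\lambda})$ valid for relaxed solutions. None of this is a re-run of your forward computations, so roughly half of the theorem is left unproven in your proposal.
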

Before giving a proof we make an observation that is relevant from the mechanical perspective:
\begin{corollary}
	The stress $\check{\sig}$ that due to the load $\Fl$ occurs in the structure of the optimal Hooke tensor distribution $\check{\lambda} =\check{\hf} \check{\mu}$ is uniform in the sense that
	\begin{equation}
		\check{\sig} \in L^\infty_{\check{\mu}}(\Ob;\Sdd), \qquad \dro(\check{\sig})=\frac{Z}{\Totc} \quad \check{\mu}\text{-a.e.} 
	\end{equation} 
\end{corollary}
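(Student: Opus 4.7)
The plan is to derive the corollary directly from Theorem \ref{thm:FMD_LCP} together with the structure of solutions of (LCP) fixed in Definition \ref{def:LCP_solution}, without any further limiting argument. Starting from any solution quadruple $\hat{u},\hat{\mu},\hat{\sig},\hat{\hf}$ of (LCP), the rescaling \eqref{eq:link_FMD_LCP} produces the (FMD)-solution quadruple with $\check{\sig} = \frac{Z}{\Totc}\, \hat{\sig}$ and $\check{\mu} = \frac{\Totc}{Z}\, \hat{\mu}$. In particular the measures $\check{\mu}$ and $\hat{\mu}$ have exactly the same null sets, so every "$\hat{\mu}$-a.e." statement transfers to a "$\check{\mu}$-a.e." statement without change.

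By Definition \ref{def:LCP_solution} one has $\dro(\hat{\sig}) = 1\ $ $\hat{\mu}$-a.e., and by the positive one-homogeneity of $\dro$ (asserted as part of Theorem \ref{thm:rho_drho})
\begin{equation*}
\dro(\check{\sig}) \,=\, \dro\!\left(\tfrac{Z}{\Totc}\,\hat{\sig}\right) \,=\, \tfrac{Z}{\Totc}\,\dro(\hat{\sig}) \,=\, \tfrac{Z}{\Totc} \qquad \check{\mu}\text{-a.e.},
\end{equation*}
which is the pointwise identity claimed. The membership $\check{\sig} \in L^\infty_{\check{\mu}}(\Ob;\Sdd)$ then follows automatically from the two-sided estimate $\tilde{C}_1 \abs{\sig} \leq \dro(\sig) \leq \tilde{C}_2 \abs{\sig}$ supplied by Theorem \ref{thm:rho_drho}: its lower bound combined with the above identity yields $\abs{\check{\sig}(x)} \leq \tilde{C}_1^{-1}\, Z/\Totc$ for $\check{\mu}$-a.e.\ $x$, hence the essential supremum is finite.

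There is essentially no obstacle — the content is purely bookkeeping on the rescaling \eqref{eq:link_FMD_LCP}. The only point deserving a remark is that $\check{\sig}$ in Definition \ref{def:FMD_solution} is a priori only determined $\check{\mu}$-a.e.\ as an element of $L^{p}_{\check{\mu}}(\Ob;\Sdd)$, so one should emphasise that the statement is a statement about the equivalence class; Theorem \ref{thm:FMD_LCP} guarantees that this class contains the representative $\tfrac{Z}{\Totc}\,\hat{\sig}$ obtained from any (LCP)-solution, and it is for that representative (and hence $\check{\mu}$-a.e.) that the uniformity $\dro(\check{\sig}) \equiv Z/\Totc$ holds.
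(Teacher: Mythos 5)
Your proof is correct and takes essentially the paper's route: the corollary is precisely the observation made inside the proof of Theorem \ref{thm:FMD_LCP} that $\dro(\check{\sig}) = \frac{Z}{\Totc}\,\dro(\hat{\sig}) = \frac{Z}{\Totc}\ $ $\hat{\mu}$-a.e. (equivalently $\check{\mu}$-a.e., since $\check{\mu}$ and $\hat{\mu}$ are positive multiples of one another), with the membership $\check{\sig} \in L^\infty_{\check{\mu}}(\Ob;\Sdd)$ following from the lower bound $\tilde{C}_1 \abs{\sig} \leq \dro(\sig)$ of Theorem \ref{thm:rho_drho} exactly as you say. One small sharpening: to cover an \emph{arbitrary} (FMD) solution rather than only those manufactured from an (LCP) solution, invert the rescaling \eqref{eq:link_FMD_LCP} and invoke the converse implication of Theorem \ref{thm:FMD_LCP} to conclude the rescaled quadruple solves (LCP), whence $\dro(\hat{\sig})=1\ $ $\hat{\mu}$-a.e. by Definition \ref{def:LCP_solution} — this is where the H\"{o}lder-equality argument in the converse part of the paper's proof does the work.
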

	\begin{proof}[Proof of Theorem \ref{thm:FMD_LCP}]
	Let us first assume that the quadruple $\hat{u},\hat{\mu},\hat{\hf},\hat{\sig}$ is a solution of (LCP) and the quadruple $\check{u},\check{\mu},\check{\hf},\check{\sig}$ is defined through \eqref{eq:link_FMD_LCP}. By definition $\hat{\TAU} = \hat{\sig} \hat{\mu}$ is a solution of the problem $\dProb$ and $\dro(\hat\TAU) = \dro(\hat\sig) \,\hat \mu = \hat\mu$. Since $\dro(\hat{\sig}) = 1\ $ $\hat\mu$-a.e. it is straightforward that $\cost(\hat{\hf}) = 1\ $ $\hat\mu$-a.e. as well: indeed, $\hk \in \Hcc(\zeta)$ for non-zero $\zeta$ only if $\cost(\hk) = 1$. Obviously the same concerns $\check{\hf}$. We verify that $\check{\lambda} = \check{\hf} \check{\mu}$ is a feasible Hooke tensor field by computing the total cost:
	\begin{equation}
	\label{eq:feas_lambda}
	\int \cost(\check{\lambda}) = \int \cost(\check{\hf})\, d \check{\mu} = \int d \check{\mu} = \frac{\Totc}{Z} \int  d \hat{\mu} = \frac{\Totc}{Z} \int \dro(\hat{\TAU}) = \Totc,
	\end{equation}
	where we have used that $\hat{\TAU}$ is a minimizer for $\dProb$. In order to prove that $\check{\lambda}$ is a solution for $\Cmin$ it suffices to show that $\Comp(\check{\lambda}) \leq \Cmin$ where $\Cmin = \frac{1}{p' \, \Totc^{p'-1}}\ Z^{\,p'}$ by Theorem \ref{thm:problem_P}. We observe that $\hat\mu$-a.e. $\dro(\check{\sig}) = \frac{Z}{\Totc} \dro(\hat{\sig})  = \frac{Z}{\Totc}$. Since there holds $ \check{\sig} \check{\mu} = \bigl( \frac{Z}{\Totc} \hat{\sig}\bigr) \bigl(\frac{\Totc}{Z} \hat{\mu} \bigr) = \hat{\sig} \hat{\mu} = \hat{\TAU}$, obviously the equilibrium equation $-\DIV (\check{\sig} \check{\mu}) =\Fl$ is satisfied. Due to the assumption of $p$-homogeneity (H\ref{as:p-hom}) the field $\check{\hf} = \hat\hf$ is both a measurable selection for $x \mapsto \Hcc\bigl( \hat{\sig}(x) \bigr)$ and $x \mapsto \Hcc\bigl( \check{\sig}(x) \bigr)$. Then, by the dual stress-based version of the elasticity problem \eqref{eq:dual_comp}
	\begin{equation}
	\label{eq:min_lambda}
	\Comp(\check{\lambda}) \leq \int j^* \bigl(\check{\hf},\check{\sig} \bigr)\, d\check{\mu} = \int \jc\bigl(\check{\sig} \bigr) \, d\check{\mu} = \int \nonumber \frac{1}{p'}\biggl(\dro\bigl(\check{\sig}\bigr)\biggr)^{p'}\, d\check{\mu}
	= \int \frac{1}{p'}\biggl(\frac{Z}{\Totc}\biggr)^{p'}\, d\check{\mu} =\Cmin,
	\end{equation}
	where in the first equality we have used the fact that $\check{\hf}(x) \in \Hcc\bigl( \check{\sig}(x) \bigr)$ for $\hat\mu$-a.e. $x$; in the last equality we acknowledged that $\int d\check{\mu} = \Totc$, see \eqref{eq:feas_lambda}. This proves minimality of $\check{\lambda}$ and we have only equalities in the chain above, which shows that $\check{\sig}$ solves the dual elasticity problem \eqref{eq:dual_comp} for $\lambda = \check{\lambda}$.
	
	In order to complete the proof of the first implication we must show that $\check{u}$ is a relaxed solution for \eqref{eq:compliance_def}. Since $\hat{u}$ is a solution for $\relProb$ there exists a sequence $\hat{u}_n \in \U_1$ such that $\norm{\hat{u}_n - \hat{u}}_\infty \rightarrow 0$. By definition of $\U_1$ we have $\ro\bigl(e(u_n) \bigr) \leq 1$ and therefore by setting $\check{u}_n = \left(Z/\Totc\right)^{p'/p} \hat{u}_n$ we obtain $\ro\bigl( e(\check{u}_n) \bigr) \leq \left(Z/\Totc\right)^{p'/p}$ with  $\norm{\check{u}_n - \check{u}}_\infty \rightarrow 0$. In order to prove that $\check{u}$ is a relaxed solution it is thus left to show that $\check{u}_n$ is a maximizing sequence for \eqref{eq:compliance_def}. We see that
	\begin{alignat*}{1}
	&\liminf\limits_{n \rightarrow \infty} \left\{ \int \pairing{\check{u}_n,\Fl} - \int j\bigl(\check{\hf},e(\check{u}_n)\bigr) \, d\check{\mu}  \right\} \geq \liminf\limits_{n \rightarrow \infty} \left\{ \int \pairing{\check{u}_n,\Fl} - \int \frac{1}{p} \biggl(\ro\bigl(e(\check{u}_n) \bigr) \biggr)^p  d\check{\mu}  \right\} \\
	\geq  &\liminf\limits_{n \rightarrow \infty} \biggl\{ \int \pairing{\check{u}_n,\Fl} - \int \frac{1}{p} \biggl(\frac{Z}{\Totc} \biggr)^{p'}\! d\check{\mu}  \biggr\} =  \lim\limits_{n \rightarrow \infty} \biggl\{ \int \pairing{\check{u}_n,\Fl} \biggr\} - \frac{Z^{\,p'}}{p \, \Totc^{p'-1}}=  \Cmin= \Comp(\check{\lambda}),
	\end{alignat*}
	where we have used the fact that $\lim_{n\rightarrow \infty} \int\pairing{\check{u}_n,\Fl} = \left(Z/\Totc\right)^{p'/p} \lim_{n\rightarrow \infty} \int\pairing{\hat{u}_n,\Fl} = \left(Z/\Totc\right)^{p'/p} Z$. This shows that $\check{u}_n$ is a maximizing sequence for \eqref{eq:compliance_def} thus finishing the proof of the first implication.
	
	Conversely we assume that the quadruple $\check{u},\check{\mu},\check{\hf},\check{\sig}$ is a solution of the (FMD) problem (by definition we have $\cost(\check{\hf}) = \nolinebreak 1\ $ $\check{\mu}$-a.e.) and the quadruple $\hat{u},\hat{\mu},\hat{\hf},\hat{\sig}$ is defined via \eqref{eq:link_FMD_LCP}. The H\"{o}lder inequality furnishes
	\begin{equation}
	\label{eq:Holder}
	\int \dro(\check{\sig}) \, d\check{\mu} \leq \biggl(\int d\check{\mu}\biggr)^{1/p} \biggl( \int \bigl( \dro(\check{\sig}) \bigr)^{p'} \, d\check{\mu} \biggr)^{1/p'} \leq \Totc^{1/p} \biggl( \int \bigl( \dro(\check{\sig}) \bigr)^{p'} \, d\check{\mu} \biggr)^{1/p'}
	\end{equation}
	and the equalities hold only if $\dro(\check{\sig})$ is $\check{\mu}$-a.e. constant and only if either $\int d\check{\mu} = \Totc$ or $\check\sig$ is zero.
	Based on the fact that $\check{\lambda} =\check{\hf} \check{\mu}$ is a solution of $\FMD$ and $\check{\sig}$ is a minimizer in \eqref{eq:dual_comp} we may write a chain
	\begin{alignat*}{1}
	\Cmin = \Comp(\check{\lambda}) = \int j^*(\check{\hf},\check{\sig}) \,d\check{\mu} \geq \int \jc(\check{\sig}) \,d\check{\mu} =  \int \frac{1}{p'} \bigl(\dro(\check{\sig}) \bigr)^{p'} \,d\check{\mu} &\geq \frac{1}{p' \Totc ^{p'/p}}\biggl( 	\int \dro(\check{\sig}) \, d\check{\mu} \biggr)^{p'} \nonumber\\
	&\geq \frac{Z^{p'}}{p' \Totc ^{p'/p}} = \Cmin,
	\end{alignat*}
	where in the last inequality we use the fact that $\check{\TAU} = \check{\sig} \check{\mu}$ is a feasible force flux in $\dProb$. We see that above we have equalities everywhere, which, assuming that $\Cmin >0$ (otherwise the theorem becomes trivial), implies several facts. First, we have $Z= \int \dro(\check{\TAU})$, which shows that $\check{\TAU}$ is a solution for $\dProb$.
	Then, by H\"{o}lder inequality \eqref{eq:Holder} and the comment below it, we obtain that $\int d\check{\mu} = \Totc$ and $\dro(\check{\sig}) = t = \mathrm{const} $ $\check{\mu}$-a.e. Combining those three facts we have $\dro(\check{\sig}) = \frac{Z}{\Totc}$ since $Z = \int \dro(\check{\TAU}) = \int \dro(\check{\sig}) \, d\check{\mu} = t\, \Totc$. From this follows that $\hat{\sig} = \frac{\Totc}{Z} \check{\sig}$ and $\hat\mu = \frac{Z}{\Totc} \check{\mu}$ are solutions for (LCP). As the last information from the chain of equalities we take the point-wise equality $j^*(\check{\hf},\check{\sig}) = \jc(\check{\sig})\ $ $\check{\mu}$-a.e. implying that $\check{\hf}(x) \in \Hcc\bigl(\check{\sig}(x)\bigr)= \Hcc\bigl(\hat{\sig}(x)\bigr)$ for $\hat{\mu}$-a.e. $x$ and thus $\hat\hf = \check{\hf}$ together with the pair $\hat{\sig}, \hat{\mu}$ are solutions for (LCP).
	
	To finish the proof we have to show that $\hat{u} = \bigl(\frac{\Totc}{Z}\bigr)^{p'/p} \check{u}$ is a solution for $\relProb$. It is straightforward to show that $\hat{u} \in \Uc$ based on our definition of the relaxed solution for \eqref{eq:compliance_def} and thus we only have to verify whether $\int \pairing{\hat u,\Fl} = Z$. One can easily show that for $\check{u}$ being a relaxed solution for $\Comp(\check{\lambda})$ there holds the repartition of energy $\int\pairing{\check{u},\Fl} = p' \, \Comp(\check{\lambda})$ (see Proposition 3 in \cite{bouchitte2007}). Since $\Comp(\check{\lambda}) = \Cmin = \frac{Z^{p'}}{p' \Totc^{p'/p}}$ we indeed obtain $\int \pairing{\hat u,\Fl} = \bigl(\frac{\Totc}{Z}\bigr)^{p'/p} \int\pairing{\check{u},\Fl}  = Z$  and the proof ends here.
\end{proof}

\section{Optimality conditions for the Free Material Design problem}
\label{sec:optimality_conditions}

In order to efficiently verify whether a given quadruple $u,\mu,\sig,\hf$ is optimal for (FMD) problem we shall state the optimality conditions. Due to much simpler structure of the problem (LCP) and the link between the two problems in Theorem \ref{thm:FMD_LCP}, it is more natural to pose the optimality conditions for (LCP). Since the form of the latter problem is similar to the one from the paper \cite{bouchitte2007} we will build upon the concepts and results given therein: in addition we must somehow involve the Hooke tensor function $\hf$. We start by quickly reviewing elements of theory of space $T_\mu$ tangent to measure and its implications; for details the reader is referred to the pioneering work \cite{bouchitte1997} and further developments in \cite{bouchitte2003} or \cite{bouchitte2007}. This theory makes it possible to $\mu$-a.e. compute the tangent strain $e_\mu(u)$ for functions $u \in \overline{\U}_1$ that are not differentiable in the classical sense -- this will be essential when formulating point-wise relation between the stress $\sig(x)$ and the strain $e_\mu(u)(x)$.

For given $\mu \in \Mes_+(\Ob)$ we define $j_\mu:\Hs \times \Sdd \times \Ob \rightarrow \R$ such that for $\mu$-a.e. $x$
\begin{equation*}
	j_\mu(\hk,\xi,x) := \inf \biggl\{ j(\hk,\xi + \zeta) \ : \ \zeta \in \mathcal{S}^\perp_\mu(x) \biggr\}
\end{equation*}
where $\mathcal{S}^\perp_\mu(x)$ is the space of symmetric tensors orthogonal to measure $\mu$ at $x$. The characterization follows: $ \mathcal{S}^\perp_\mu(x) = \bigl(\mathcal{S}_\mu(x)\bigr)^\perp$ with $ \mathcal{S}_\mu(x) =  T_\mu(x) \otimes T_\mu(x)$ where $T_\mu(x) \subset \Rd$ is the space tangent to measure. We also introduce $\jh_\mu: \Sdd \times \Ob \rightarrow \R$ for $\mu$-a.e. $x$
\begin{equation}
	\label{eq:jh_mu}
	\jh_\mu(\xi,x) := \inf \biggl\{ \jh(\xi + \zeta) \ : \ \zeta \in \mathcal{S}^\perp_\mu(x) \biggr\}
\end{equation}
and, again by employing Proposition 1 in \cite{bouchitte2007} on interchanging $\inf$ and $\sup$, we observe that
\begin{equation*}
	\jh_\mu(\xi,x) = \inf\limits_{\zeta \in \mathcal{S}^\perp_\mu(x)} \sup_{\hk\in \Hc} \biggl\{ j(\hk,\xi + \zeta) \biggr\} =  \sup_{\hk\in \Hc} \inf\limits_{\zeta \in \mathcal{S}^\perp_\mu(x)} \biggl\{ j(\hk,\xi + \zeta) \biggr\} = \sup_{\hk\in \Hc} \biggl\{ j_\mu(\hk,\xi,x) \biggr\},
\end{equation*}
namely the operations $\hat{(\argu)}$ and $(\argu)_\mu$ commute thus the symbol $\jh_\mu$ is justified. It is
straightforward to show that for each $H \in \Hs$ and $\mu$-a.e. $x$ the function $j_\mu(H,\argu,x)$ inherits the properties of convexity and positive $p$-homogeneity enjoyed by the function $j(H,\argu)$ and therefore its convex conjugate $j^*_{\mu}(H,\argu,x)$ (with respect to the second argument) is meaningful and moreover the repartition of energy analogous to \eqref{eq:repartition} holds whenever $\sig \in \partial\, j_\mu(H,\xi,x)$. On top of that one easily checks that $j^*_{\mu}(H,\sig,x) = j^*(H,\sig)$ whenever $\sig \in \mathcal{S}_\mu(x)$ and $j^*_{\mu}(H,\sig,x) = \infty$ if $\sig \notin \mathcal{S}_\mu(x)$.
 
By $P_\mu(x)$ for $\mu$-a.e. $x$ we will understand an orthogonal projection onto $T_\mu(x)$. Next we introduce an operator $e_\mu : \Uc \rightarrow L^\infty_\mu(\Ob;\Sdd)$ such that for $u \in \Uc$
\begin{equation*}
	e_\mu(u) := P^\top_\mu \, \xi \,  P_\mu \quad \text{for any } \xi \in L^\infty_\mu(\Ob;\Sdd) \text{ such that } \exists u_n \in \U_1 \text{ with } u_n \rightrightarrows u, \ e(u_n) \stackrel{\ast}{\rightharpoonup} \xi \text{ in } L^\infty_\mu(\Ob).
\end{equation*}
The function $\xi$ always exists since the set $e(\U_1)$ is weakly-* precompact in $L^\infty_\mu(\Ob;\Sdd)$ and, although $\xi$ may be non-unique, the field $P^\top_\mu \, \xi \,  P_\mu$ is, see \cite{bouchitte2007}. The following lemma inscribes point (ii) of Theorem \ref{thm:rho_drho} into the frames of theory of space tangent to measure $\mu$: 

\begin{lemma}
	\label{lem:tengantial_constiutive_law}
	Let us take any $u \in \Uc$, $\mu \in \Mes_+(\Ob)$. Then, for $\mu$-a.e. $x \in \Ob$, any non-zero $\sig \in \mathcal{S}_\mu(x)$ ($\sig$ \nolinebreak is a tensor, not a tensor function) and $\breve{\hk} \in \Hc$ the following conditions are equivalent:
	\begin{enumerate}[(i)]
		\item there hold extremality conditions:
		\begin{equation*}
			\pairing{\,e_\mu(u)(x)\,,\,\sig\,} = \dro(\sig) \qquad \text{and} \qquad \breve{\hk} \in \Hcc(\sig);
		\end{equation*}
		\item the constitutive law is satisfied: 
		\begin{equation}
		\label{eq:tangential_constiutive_law}
		\frac{1}{\dro(\sig)} \, \sig \in \partial j_\mu\biggl(\breve{\hk},e_\mu(u)(x),x \biggr),
		\end{equation}
		with subdifferential intended with respect to the second argument of $j_\mu$.
	\end{enumerate}
	\begin{proof}
	 Thanks to Lemma 1 in \cite{bouchitte2007} for a function $u \in \Uc$ we have $\jh_\mu\bigl(e_\mu(u)(x),x\bigr)\leq 1/p$ for $\mu$-a.e. $x$. or in other words for every $x$ in some Borel set $A \subset \Ob$ such that $\mu(\Ob \backslash A) =0$. In the sequel of the proof we fix $x \in A$ for which we treat $\mathcal{S}_\mu(x)$ as a well defined linear subspace of $\Sdd$.
	 
	 Since the minimization problem in \eqref{eq:jh_mu} always admits a solution we find $\zeta \in \mathcal{S}^\perp_\mu(x)$ such that $\jh_\mu\bigl(e_\mu(u)(x),x\bigr) = \jh\bigl(e_\mu(u)(x)+\zeta \bigr)\leq 1/p$ or equivalently $\ro\bigl(e_\mu(u)(x)+\zeta \bigr) \leq 1$ or alternatively $j\bigr(\hk,e_\mu(u)(x)+\zeta\bigl)\leq 1/p$ for each $\hk\in \Hc$. Next we notice that $j^*_{\mu}(\breve{\hk},\sig,x) = j^*(\breve{\hk},\sig)$ due to $\sig \in \mathcal{S}_\mu(x)$. Further we will assume that $\dro(\sig) =1$, which is not restrictive.
	 
	 First we prove the implication (i) $\Rightarrow$ (ii). We shall denote $\xi:=e_\mu(u)(x)+\zeta$ where $\zeta$ is chosen as above. Since $\zeta \in \mathcal{S}^\perp_\mu(x)$ and $\sig \in \mathcal{S}_\mu(x)$ we see that $\pairing{\xi,\sig} = \pairing{e_\mu(u)(x),\sig} = 1$. Since in addition $\rho(\xi) \leq 1$ we see that the triple $\xi,\sig,\breve{\hk}$ satisfies the condition (1) in point (ii) of Theorem \ref{thm:rho_drho} and therefore (2) follows, i.e. $\sig \in \partial j\bigl(\breve{\hk},\xi\bigr)$ or alternatively $\pairing{\xi,\sig} = j(\breve{\hk},\xi ) + j^*\bigl(\breve{\hk},\sig\bigr)$. Due to the remarks above there also must hold $\pairing{e_\mu(u)(x),\sig} = j_\mu\bigr(\breve{\hk},e_\mu(u)(x),x \bigr) + j^*_{\mu}\bigl(\breve{\hk},\sig,x\bigr)$ furnishing \eqref{eq:tangential_constiutive_law} and thus establishing the first implication.
	 
	 For the second implication (ii) $\Rightarrow$ (i) we shall modify the proof of implication (2) $\Rightarrow$ (1) in Theorem \ref{thm:rho_drho}. The constitutive law \eqref{eq:tangential_constiutive_law} implies repartition of energy $\pairing{e_\mu(u)(x),\sig} = p \, j_\mu\bigl(\breve{\hk},e_\mu(u)(x),x\bigr)$ and $\pairing{e_\mu(u)(x),\sig} = p' \, j^*_{\mu}(\breve{\hk},\sig,x)$. In addition we observe that $j^*\bigl(\breve{\hk},\sig\bigr) \geq \jc\bigl(\sig\bigr) = \frac{1}{p'}\bigl(\dro(\sig) \bigr)^{p'} = \frac{1}{p'}$ and we may readily write down a chain
	 \begin{equation*}
	 	1  \leq p'\, j^*\bigl(\breve{\hk},\sig\bigr) = p'\, j^*_{\mu}\bigl(\breve{\hk},\sig,x\bigr) = \pairing{e_\mu(u)(x),\sig} = p \, j_\mu\bigl(\breve{\hk},e_\mu(u)(x),x\bigr) \leq p\, \jh_\mu\bigl(e_\mu(u)(x),x\bigr) \leq 1
	 \end{equation*}
	 being in fact a chain of equalities furnishing $\pairing{e_\mu(u)(x),\sig} = 1$ and $\breve{\hk} \in \Hcc(\sig)$, which completes the proof.
	\end{proof}
\end{lemma}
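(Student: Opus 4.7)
The plan is to reduce the tangential statement to the already-established point-wise equivalence of Theorem~\ref{thm:rho_drho}(ii) by exploiting two structural facts. First, for $\sig \in \mathcal{S}_\mu(x)$ the convex conjugates of $j$ and of $j_\mu$ coincide, i.e.\ $j^*_\mu(\breve{\hk},\sig,x) = j^*(\breve{\hk},\sig)$, which already features in the discussion preceding the lemma. Secondly, by Lemma~1 in \cite{bouchitte2007}, at $\mu$-a.e.\ $x$ one has $\jh_\mu\bigl(e_\mu(u)(x),x\bigr) \leq 1/p$, and since the infimum in the definition \eqref{eq:jh_mu} of $\jh_\mu$ is attained at a finite-dimensional level, we can select $\zeta \in \mathcal{S}^\perp_\mu(x)$ realising it. Setting $\xi := e_\mu(u)(x) + \zeta$ one then has $\rho(\xi)\leq 1$, and crucially $\pairing{\xi,\sig} = \pairing{e_\mu(u)(x),\sig}$ because $\zeta \perp \sig$. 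Throughout we normalise $\dro(\sig)=1$.

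For the implication (i)~$\Rightarrow$~(ii), the triple $(\xi,\sig,\breve{\hk})$ verifies the hypotheses of Theorem~\ref{thm:rho_drho}(ii), condition~(1): indeed $\rho(\xi)\leq 1$, $\pairing{\xi,\sig} = \pairing{e_\mu(u)(x),\sig} = 1 = \dro(\sig)$, and $\breve{\hk}\in\Hcc(\sig)$ by assumption. Hence condition~(2) yields $\sig \in \partial j(\breve{\hk},\xi)$, equivalently $\pairing{\xi,\sig} = j(\breve{\hk},\xi) + j^*(\breve{\hk},\sig)$. To translate this into the subdifferential inclusion for $j_\mu$, one must only check the identification $j_\mu(\breve{\hk},e_\mu(u)(x),x) = j(\breve{\hk},\xi)$. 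The inequality ``$\leq$'' is built into the definition; the reverse follows from a one-line Fenchel argument: for any $\zeta'\in \mathcal{S}^\perp_\mu(x)$, the inequality $j(\breve{\hk},e_\mu(u)(x)+\zeta') \geq \pairing{e_\mu(u)(x)+\zeta',\sig} - j^*(\breve{\hk},\sig) = \pairing{e_\mu(u)(x),\sig} - j^*(\breve{\hk},\sig) = j(\breve{\hk},\xi)$ holds, and taking the infimum over $\zeta'$ closes the gap. Combined with $j^*_\mu(\breve{\hk},\sig,x)=j^*(\breve{\hk},\sig)$ this gives \eqref{eq:tangential_constiutive_law}.

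For the converse implication (ii)~$\Rightarrow$~(i), I would mimic the proof of (2)~$\Rightarrow$~(1) in Theorem~\ref{thm:rho_drho} but at the tangential level. The constitutive inclusion \eqref{eq:tangential_constiutive_law} is equivalent to the repartition of energy
\begin{equation*}
\pairing{e_\mu(u)(x),\sig} \;=\; p\,j_\mu\bigl(\breve{\hk},e_\mu(u)(x),x\bigr) \;=\; p'\,j^*_\mu\bigl(\breve{\hk},\sig,x\bigr).
\end{equation*}
Using $j^*_\mu(\breve{\hk},\sig,x)=j^*(\breve{\hk},\sig)\geq \jc(\sig) = \tfrac{1}{p'}(\dro(\sig))^{p'} = 1/p'$ on one side, and $j_\mu(\breve{\hk},e_\mu(u)(x),x)\leq \jh_\mu(e_\mu(u)(x),x)\leq 1/p$ on the other, one obtains the sandwich
\begin{equation*}
1 \;\leq\; p'\,j^*(\breve{\hk},\sig) \;=\; \pairing{e_\mu(u)(x),\sig} \;=\; p\,j_\mu\bigl(\breve{\hk},e_\mu(u)(x),x\bigr) \;\leq\; 1,
\end{equation*}
which forces equality throughout. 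The outer equalities deliver $\pairing{e_\mu(u)(x),\sig}=1=\dro(\sig)$ and $j^*(\breve{\hk},\sig)=\jc(\sig)$, i.e.\ $\breve{\hk}\in\Hcc(\sig)$, as required.

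The main obstacle I anticipate is not logical but notational: one must carefully keep track of the distinction between $j$ and $j_\mu$ (and their conjugates), and justify the identification $j_\mu(\breve{\hk},e_\mu(u)(x),x) = j(\breve{\hk},\xi)$ using that $\sig$ lies in $\mathcal{S}_\mu(x)$ so that the tangential and ambient conjugates agree. Once these identifications are in place, the tangential argument parallels Theorem~\ref{thm:rho_drho}(ii) almost verbatim, which is why reducing to it (rather than redoing the saddle-point manipulation in the tangential setting) is the cleanest route.
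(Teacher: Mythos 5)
Your proposal is correct and follows essentially the same route as the paper's proof: reduce to Theorem \ref{thm:rho_drho}(ii) via a minimizer $\zeta \in \mathcal{S}^\perp_\mu(x)$ of \eqref{eq:jh_mu}, use $j^*_\mu(\breve{\hk},\sig,x)=j^*(\breve{\hk},\sig)$ for $\sig\in\mathcal{S}_\mu(x)$, and close the converse with the same repartition-of-energy sandwich. Your explicit Fenchel argument identifying $j_\mu\bigl(\breve{\hk},e_\mu(u)(x),x\bigr)=j(\breve{\hk},\xi)$ merely spells out a step the paper leaves implicit ("due to the remarks above"), and is a welcome clarification rather than a different approach.
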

The optimality condition for the Linear Constrained Problem may readily be given:
\begin{theorem}
	\label{thm:optimality_conditions}
	Let us consider a quadruple ${u}\in C(\Ob;\Rd), {\mu} \in \Mes_+(\Ob), {\sig} \in L^\infty_\mu(\Ob;\Sdd)$, ${\hf} \in L^\infty_\mu(\Ob;\Hs)$ with $\dro({\sig}) = 1 $ and $ \hf \in \Hc \ $ ${\mu}$-a.e. The quadruple solves (LCP) if and only if the following optimality conditions are met:
	\begin{enumerate}[(i)]
		\item $-\DIV ({\sig} {\mu}) = \Fl $;
		\item ${u} \in \Uc$;
		\item $\pairing{e_{\mu}({u})(x),\sig(x)} = 1\quad $ and $ \quad\hf(x) \in \Hcc\bigl( \sig(x) \bigr) \quad$ for $\mu$-a.e. $x$.
	\end{enumerate}
	Moreover, condition (iii) may be equivalently put as a constitutive law of elasticity:
	\begin{enumerate}[(i)']
		\setcounter{enumi}{2}
		\item ${\sig}(x) \in \partial j_{{\mu}} \bigl( {\hf}(x), e_{{\mu}}({u})(x),x \bigr)\quad $ for ${\mu}$-a.e. $x$.
	\end{enumerate}

	\begin{proof}
		Since the form of the duality pair $\relProb$ and $\dProb$ is identical to the one from \cite{bouchitte2007} we may quote the optimality conditions given in Theorem 3 therein: for the triple $(u,\mu,\sig)$ with $u \in \Uc$, $\mu \in \Mes_+(\Ob)$ and $\dro({\sig}) =1$ the following conditions are equivalent:
		\begin{enumerate}[(1)]
			\item $u$ solves the problem $\relProb$ and $\TAU = \sig \mu$ solves the problem $\dProb$;
			\item conditions (i), (ii) hold and moreover $\pairing{e_{\mu}({u}),\sig} = 1\ $ $\mu$-a.e.
		\end{enumerate}
		By Definition \ref{def:LCP_solution} we see that the quadruple $(u,\mu,\sig,\hf)$ satisfying the assumptions of the theorem solves (LCP) if and only if: (1) holds and moreover $\hf(x) \in \Hcc\bigl( \sig(x) \bigr)$ for $\mu$-a.e. $x$. Thus we infer that the quadruple $(u,\mu,\sig,\hf)$ solves (LCP) if and only if: conditions (i), (ii), (iii) hold. The "moreover" part of the theorem follows directly from Lemma \ref{lem:tengantial_constiutive_law}.
	\end{proof}
\end{theorem}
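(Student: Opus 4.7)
The plan is to bootstrap off the known optimality conditions for the displacement–force flux pair already established in \cite{bouchitte2007}, and then incorporate the extra Hooke variable $\hf$ via Theorem \ref{thm:rho_drho} and Lemma \ref{lem:tengantial_constiutive_law}. The key observation is that the pair $\relProb$, $\dProb$ in our setting is formally identical to the (LCP) treated in \cite{bouchitte2007}, so the only genuinely new content is the coupling between $\hf$ and $\sig$.

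First I would invoke Theorem 3 of \cite{bouchitte2007}, which, for the triple $(u,\mu,\sig)$ with $u\in\Uc$, $\mu\in\Mes_+(\Ob)$ and $\dro(\sig)=1\ $ $\mu$-a.e., asserts the equivalence of: (a) $u$ solves $\relProb$ together with $\TAU=\sig\mu$ solving $\dProb$; and (b) the conditions (i), (ii) together with the point-wise extremality $\pairing{e_\mu(u),\sig}=1\ $ $\mu$-a.e. Given the form of Definition \ref{def:LCP_solution}, the quadruple $(u,\mu,\sig,\hf)$ solves (LCP) precisely when (a) holds together with $\hf(x)\in\Hcc(\sig(x))\ $ $\mu$-a.e. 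Combining these immediately gives the equivalence between being a solution of (LCP) and the full set of conditions (i), (ii), (iii).

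For the ``moreover'' part, I would appeal directly to Lemma \ref{lem:tengantial_constiutive_law} applied pointwise for $\mu$-a.e. $x$, with the roles played by $\sig(x)$ and $\breve{\hk}=\hf(x)$. Since $\sig(x)\in \mathcal{S}_\mu(x)\ $ $\mu$-a.e. (because $\sig$ arises as the Radon--Nikodym density $d\TAU/d\mu$, hence lies in the tangential space, as is recalled in \cite{bouchitte2007}) and $\dro(\sig(x))=1$, the lemma furnishes that the conjunction in (iii) is equivalent to the inclusion (iii)'. The zero stress case is trivial and may be treated separately if needed.

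The only subtlety worth highlighting is the measurability issue: to even write condition (iii) one must ensure that $\hf$ can be chosen $\mu$-measurable so that $\hf(x)\in\Hcc(\sig(x))$ $\mu$-a.e., but this is already guaranteed by Lemma \ref{lem:measurable_selection}. There is no real obstacle in this proof; the work has already been done in the preceding lemmas and in the results borrowed from \cite{bouchitte2007}. The main conceptual step is simply realizing that the (LCP) solution concept splits cleanly into the ``geometric'' part (equilibrium plus the Fenchel-type extremality $\pairing{e_\mu(u),\sig}=1$) treated by the earlier paper, and the ``constitutive'' part ($\hf\in\Hcc(\sig)$) treated by Theorem \ref{thm:rho_drho} and Lemma \ref{lem:tengantial_constiutive_law}.
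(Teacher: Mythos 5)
Your proposal follows essentially the same route as the paper's proof: quote Theorem 3 of \cite{bouchitte2007} for the triple $(u,\mu,\sig)$, splice in the extra condition $\hf(x)\in\Hcc(\sig(x))$ via Definition \ref{def:LCP_solution}, and reduce the ``moreover'' part to Lemma \ref{lem:tengantial_constiutive_law}. Your added remarks on measurability and on $\sig(x)\in\mathcal{S}_\mu(x)$ are correct and only make explicit what the paper leaves implicit.
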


\section{Case study and examples of optimal structures}
\label{sec:examples}

\subsection{Other examples of Free Material Design settings}
\label{sec:example_material_settings}

In Example \ref{ex:rho_drho_AMD} we have computed: $\rho$, $\dro$ together with the extremality conditions for $\xi,\sig$ and the sets of optimal Hooke tensors $\Hch(\xi), \Hcc(\sig)$ in the setting of Anisotropic Material Design (AMD) problem which assumed that $\Hs = \Hs_0$ (all Hooke tensors are admissible) and $j(H,\xi) = \frac{1}{2} \pairing{H \xi, \xi}$ (linearly elastic material). The computed functions and sets virtually define the (LCP) (in AMD setting) which, in accordance with sections above, paves the way to solution of the original (FMD) problem. In the present section we will compute $\rho, \dro$ and $\Hch(\xi), \Hcc(\sig)$ for other settings. Although we will mostly vary the set $\Hs$ of admissible Hooke tensors, we shall also give two alternatives for the energy function $j$ so that the fairly general assumptions (H1)-(H5) are worthwhile. We start with the first one, while the other will be presented at the end of this subsection (cf. Example \ref{ex:power_law}):

\begin{example}[\textbf{Constitutive law of elastic material that is dissymmetric in tension and compresion}]
	\label{ex:dissymetru_tension_compresion}
	For a chosen convex closed cone $\Hs$ let $j: \Hs \times \Sdd \rightarrow \R$ be any elastic potential that meets the assumptions (H1)-(H5). We propose two functions $j_+,j_-: \Hs \times \Sdd \rightarrow \R$ such that for any $\hk \in \Hs$
	\begin{equation}
	\label{eq:j_plus_j_minus_defintions}
	j_+(\hk,\argu) := \bigl(j^*(\hk,\argu)+\mathbbm{I}_{\mathcal{S}^{d\times d}_+}\bigr)^*\qquad \text{and} \qquad j_-(\hk,\argu) := \bigl(j^*(\hk,\argu)+\mathbbm{I}_{\mathcal{S}^{d\times d}_-}\bigr)^* 
	\end{equation}
	which are proposals of elastic potentials of materials that are incapable of withstanding compressive and, respectively, tensile stresses. The sets $\mathcal{S}^{d\times d}_+$ and $\mathcal{S}^{d\times d}_-$ are the convex cones of positive and negative semi-definite symmetric tensors;   $\mathbbm{I}_A$ for $A \subset X$ and any vector space $X$ denotes the indicator function, i.e.  $\mathbbm{I}_A(x) = 0$ for $x\in A$ and $\mathbbm{I}_A(x) = \infty$ for $x \in X \backslash A$. For any $\xi \in \Sdd$ we obtain by introducing a Lagrange multiplier $\zeta$: 
	\begin{alignat*}{1}
	j_+(\hk,\xi) &= \sup\limits_{\sig \in \Sdd} \biggl\{\pairing{\xi,\sig} - j^*(\hk,\sig) - \mathbbm{I}_{\mathcal{S}^{d\times d}_+}(\sig) \biggr\} =  \sup\limits_{\sig \in \mathcal{S}^{d\times d}_+} \biggl\{\pairing{\xi,\sig} - j^*(\hk,\sig) \biggr\}\\
	\nonumber
	&= \sup\limits_{\sig \in \Sdd} \inf\limits_{\zeta \in \mathcal{S}^{d\times d}_+} \biggl\{\pairing{\xi+\zeta,\sig} - j^*(\hk,\sig) \biggr\} =  \inf\limits_{\zeta \in \mathcal{S}^{d\times d}_+} \sup\limits_{\sig \in \Sdd} \biggl\{\pairing{\xi+\zeta,\sig} - j^*(\hk,\sig) \biggr\},
	\end{alignat*}
	where in order to swap the order of $\inf$ and $\sup$ we again used Proposition 1 in \cite{bouchitte2007} (from the beginning we may restrict $\sig$ to some ball in $\Sdd$, which is due to ellipticity $j^*(\hk,\sig) \geq C(\hk) \abs{\sig}^{p'}$ for any $\hk$). By repeating the same argument for $j_-$ we obtain formulas
	\begin{equation}
	\label{eq:j_plus_j_minus_formulas}
	j_+(\hk,\xi) =  \inf\limits_{\zeta \in \mathcal{S}^{d\times d}_+} j(\hk,\xi +\zeta) \qquad \text{and} \qquad j_-(\hk,\xi) =\inf\limits_{\zeta \in \mathcal{S}^{d\times d}_-} j(\hk,\xi +\zeta).
	\end{equation}
	It is now easy to see that the functions $j_+, j_-$ satisfy assumptions (H1)-(H4). Conditions (H\ref{as:convex}) and (H\ref{as:p-hom}) follow directly from definitions \eqref{eq:j_plus_j_minus_defintions} and properties of Fenchel transform. Condition (H\ref{as:concave}) can be easily inferred from \eqref{eq:j_plus_j_minus_formulas}, where functions $j_+(\argu,\xi),j_-(\argu,\xi)$ are point-wise infima of concave u.s.c. functions $j(\argu,\xi)$; one similarly shows (H\ref{as:1-hom}). It is clear, however, that the assumption (H\ref{as:elip}) is not satisfied for either of functions $j_+,j_-$: indeed, there for instance holds $j_+(H,\xi)=0$ for any $H\in \Hs$ and $\xi \in \mathcal{S}^{d\times d}_-$.
	
	In order to restore the condition (H\ref{as:elip}) we define a function $j_\pm: \Hs \times \Sdd \rightarrow \R$ that shall model a composite material that is dissymmetric for tension and compresion:
	\begin{equation*}
		j_\pm(H,\xi) = (\kappa_+)^p \,j_+(H,\xi) + (\kappa_-)^p \,j_-(H,\xi)
	\end{equation*}
	where $\kappa_+,\kappa_-$ are positive reals and $p$ is the homogeneity exponent of $j(H,\argu)$. To show that the condition (H\ref{as:elip}) is met for $j_\pm$ it will suffice to show that $\bar{j}_\pm(\xi) = \max_{\hk \in \Hc} j_\pm(\hk,\xi)$ is greater than zero for any non-zero $\xi$. This amounts to verifying whether $\bar{j}^*_\pm(\sig)$ is finite for any $\sig \in \Sdd$. Using the formula \eqref{eq:conjugate_of_jhat} (its proof did not utilize property (H\ref{as:elip})) and by employing the inf-convolution formula for convex conjugate of sum of functions we obtain
	\begin{equation*}
	\bar{j}^*_\pm(\sig) = \inf\limits_{\hk\in \Hc} j^*_\pm(H,\sig) = \inf\limits_{\hk\in \Hc} \inf\limits_{\substack{\sig_+\in \mathcal{S}^{d\times d}_+ \\ \sig_-\in \mathcal{S}^{d\times d}_- } } \biggl\{ \frac{1}{(\kappa_+)^{p'}}\,j^*(\hk,\sig_+) + \frac{1}{(\kappa_-)^{p'}}\,j^*(\hk,\sig_-) \,  : \,  \sig_+ + \sig_- = \sig\biggr\}.
	\end{equation*}
	Next, since $\jc$ is a real function on $\Sdd$, for any $\sig_+,\sig_-\in \Sdd$ there exist $\hk_+,\hk_-\in \Hc$ such that $j^*(\hk_+,\sig_+) < \infty$, $j^*(\hk_-,\sig_-) < \infty$. We set $\hk_\pm = (\hk_+ + \hk_-)/2\in \Hc$ to discover that \eqref{eq:sub_additivity_j_star} gives $j^*(\hk_\pm,\sig_+) <\infty$ and $j^*(\hk_\pm,\sig_-) <\infty$ and therefore the RHS of the above is finite proving (H\ref{as:elip}).
	
	In summary, the function $j_\pm:\Hs \times \Sdd \rightarrow \R$ satisfies the conditions (H1)-(H5) and thus the Free Material Design problem is well posed for the material that $j_\pm$ models. In particular the function $j_\pm$ is an example of a function which in general non-trivially meets the concavity condition (H\ref{as:concave}): even in the case when $j(H,\xi) = \frac{1}{2} \pairing{\hk\,\xi,\xi}$ the function $j_\pm$ may be non-linear with respect to argument $\hk$. In fact, in the paper \cite{giaquinta1985} in Equation (3.19) the authors construct an explicit formula for energy function that happens to coincide with $j_-(H,\xi)$. The point of departure therein is 2D linear elasticity with an isotropic Hooke tensor $\hk$. We quote their result below (we use bulk and shear constants $K$ and $G$ instead of $E,\nu$, see \eqref{eq:Young_and_Poisson}):
	\begin{equation*}
		j_-(\hk,\xi) = \left\{
		\begin{array}{ccl}
			\frac{1}{2} \pairing{\hk\,\xi,\xi} & \quad \text{if} \quad  \xi \in& \Sigma_1(K,G),\\
			\frac{1}{2}\, \frac{4\, K\, G}{K+G}\, \bigl(\min\{\lambda_1(\xi),\lambda_2(\xi)\} \bigr)^2  & \quad \text{if} \quad  \xi \in& \Sigma_2(K,G),\\
			0 & \quad \text{if} \quad  \xi \in& \mathcal{S}^{d\times d}_+
		\end{array}
		\right.
	\end{equation*}
	where $\Sigma_1(K,G), \Sigma_2(K,G)$ are subregions of $\Sdd$, see \cite{giaquinta1985} for details; we note that the quotient $\frac{4\, K\, G}{K+G}$ above is  the Young modulus $E$, see \eqref{eq:Young_and_Poisson}. If for the cone of admissible Hooke tensors $\Hs$ we choose $\Hs_{iso}$ (see \eqref{eq:iso_K_G} and Example \ref{ex:rho_drho_IMD} below) for a fixed $\xi$ we see that, provided the moduli $K,G$ vary such that $\xi \in \Sigma_2(K,G)$, the function $j_-(\argu,\xi)$ is not linear, i.e. the energy $j_-$ does not depend linearly on $K,G$ and the same will apply to $j_\pm$. This example justifies the need for a fairly general assumption (H\ref{as:concave}) which allows energy functions that does not vary linearly with respect to $\hk$. 
\end{example}

We move on to present another three settings of the Free Material Design problem:

\begin{example}[\textbf{Fibrous Material Design problem}]
	\label{ex:rho_drho_FibMD}
	We present the setting of the \textit{Fibrous Material Design} problem (FibMD) which differs from AMD problem in Example \ref{ex:rho_drho_AMD} only by the choice of admissible family of Hooke tensors: 
	\begin{equation}
	\label{eq:FibMD_setting}
	\Hs = \HM, \qquad j(\hk,\xi) =   \frac{1}{2} \pairing{\hk \,\xi,\xi}, \qquad \cost(\hk) = \tr\, \hk
	\end{equation}
	where $\Hax$ was defined in Example \ref{ex:Hs_Michell} as a closed, yet non-convex (for the case $d>1$) cone $\Hax$ of uni-axial Hooke tensors $a \ \eta \otimes \eta \otimes \eta \otimes \eta $ with $a \geq 0$ and $\eta\in S^{d-1}$. We first observe that for each $\hk \in \Hax$ with $\cost(\hk) \leq 1$, i.e. with $\tr \, \hk = a \leq 1$, there holds 
	\begin{equation}
	\label{eq:est_uniaxial}
	j(\hk,\xi) = \frac{1}{2} \pairing{\hk \,\xi,\xi} = \frac{a}{2} \, \bigl(\pairing{\xi,\eta \otimes \eta} \bigr)^2 \leq \frac{1}{2} \,\left(\max\limits_{i \in \{1,\ldots,d\}} \abs{\lambda_i(\xi)} \right)^2
	\end{equation}
	and at the same time
	\begin{equation}
	\label{eq:max_uniaxial}
	j(\bar{\hk}_\xi,\xi) = \frac{1}{2} \,\left(\max\limits_{i \in \{1,\ldots,d\}} \abs{\lambda_i(\xi)} \right)^2 \qquad \text{for} \qquad  \bar{\hk}_\xi = \bar{v}(\xi) \otimes\bar{v}(\xi) \otimes \bar{v}(\xi) \otimes \bar{v}(\xi)
	\end{equation}
	where $\bar{v}(\xi)$ is any unit eigenvector of $\xi$ corresponding to an eigenvalue of maximal absolute value.  
	
	Let us now take any $\tilde{\hk} \in \mathrm{conv} \bigl(\Hax \bigr)$, namely, since $\Hax$ is a cone, $\tilde\hk = \sum_{i=1}^m \alpha_i \hk_i$ for some $\alpha_i \geq 0$ and $\hk_i \in \Hax$ with $\cost(\hk_i) > 0$. Since both $\cost = \tr$ and $j(\argu,\xi)$ are linear there holds $\cost(\tilde{\hk}) = \nolinebreak \sum_{i=1}^m \alpha_i\, \cost\left(\hk_i\right)$ and thus
	\begin{alignat*}{1}
	j(\tilde{\hk},\xi) = \sum_{i=1}^m \alpha_i\, j(\hk_i,\xi) =  \sum_{i=1}^m \alpha_i \, \cost(\hk_i)\ j\left(\frac{\hk_i}{\cost(\hk_i)},\xi\right) &\leq \biggl(\sup\limits_{\substack{\hk \in \Hax \\ \cost(\hk)\leq 1 }} j(\hk,\xi) \biggr) \sum_{i=1}^m \alpha_i \, \cost(\hk_i) \nonumber \\ 
	& = \biggl(\sup\limits_{\substack{\hk \in \Hax \\ \cost(\hk)\leq 1 }} j(\hk,\xi) \biggr)\ \cost(\tilde{\hk}).
	\end{alignat*}
	By recalling \eqref{eq:est_uniaxial} and \eqref{eq:max_uniaxial} we arrive at
	\begin{equation}
	\label{eq:Hax_instead_of_HM}
	\jh(\xi) = \max\limits_{\substack{\hk \in \HM \\ \cost(\hk)\leq 1 }} j(\hk,\xi) = \max\limits_{\substack{\hk \in \Hax \\ \cost(\hk)\leq 1 }} j(\hk,\xi) = \frac{1}{2} \,\left(\max\limits_{i \in \{1,\ldots,d\}} \abs{\lambda_i(\xi)} \right)^2,
	\end{equation}
	where the first equality is by definition of $\jh$; moreover
	\begin{equation}
	\label{eq:Hch_Michell}
	\Hch(\xi) = \mathrm{conv} \biggl\{\bar{v}(\xi) \otimes\bar{v}(\xi) \otimes \bar{v}(\xi) \otimes \bar{v}(\xi)  \ : \bar{v}(\xi) \text{ is an eigenvector } v_i(\xi) \text{ with maximal } \abs{\lambda_i(\xi)}   \biggr\}.
	\end{equation}
	As a consequence $\ro$ becomes the spectral norm on the space of symmetric matrices $\Sdd$; we display it next to the well-established formula for its polar:
	\begin{equation}
		\label{eq:spectral_rho}
		\ro(\xi) = \max\limits_{i \in \{1,\ldots,d\}} \abs{\lambda_i(\xi)}, \qquad \dro(\sig) = \sum_{i =1}^d \abs{\lambda_i(\sig)}.
	\end{equation}
	The extremality condition for the pair $\rho,\dro$ may be characterized as follows
	\begin{equation}
	\label{eq:ext_cond_Michell}
	\pairing{\xi,\sig} = \ro(\xi) \, \dro(\sig) \qquad \Leftrightarrow \qquad
	\left\{
	\begin{array}{l}
	\text{every eigenvector of } \sig  \text{ is an eigenvector of } \xi \text{ and }\\
	\lambda_i(\sig) \neq 0 \quad \Rightarrow \quad  \lambda_i(\xi) = \sign\bigl(\lambda_i(\sig)\bigr)\, \ro(\xi).
	\end{array}
	\right.	   
	\end{equation}
	
	It is thus only left to characterize the set $\Hcc(\sig)$; we see that this time around we are forced to search the set $\Hs = \HM$, instead of just $\Hax$, which was the case while maximizing $j(\argu,\xi)$ (see \eqref{eq:Hax_instead_of_HM}): indeed, any $\sig$ of at least two non-zero eigenvalues yields $j^*(\hk,\sig)=\infty$ for each $\hk \in \Hax$. According to point (ii) of Theorem \ref{thm:rho_drho} for a given non-zero $\sigma$ the Hooke tensor $\hk \in \Hc$ is an element of $\Hcc(\sig)$ if and only if the constitutive law \eqref{eq:extreme_const_law} holds for any $\xi = \xi_\sig$ that satisfies: $\rho(\xi_\sig)=1$ and the extremal relation \eqref{eq:ext_cond_Michell} with $\sig$. Since the function $j$ was chosen as quadratic form (see \eqref{eq:FibMD_setting}) the constitutive law reads
	\begin{equation}
		\label{eq:criteria_for_optimality_of_H}
		\frac{\sig}{\dro(\sig)} = H \, \xi_\sig.
	\end{equation}
	With $v_i(\sig)$ denoting unit eigenvectors of $\sig$  for a non-zero stress $\sig$ we propose the Hooke tensor
	\begin{equation}
		\label{eq:optimal_H_for_Michell}
		\bar{\hk}_\sig = \sum_{i=1}^{d} \frac{\abs{\lambda_i(\sig)}}{\dro(\sig)} \ v_i(\sig) \otimes v_i(\sig) \otimes v_i(\sig) \otimes v_i(\sig) 
	\end{equation}
	that is an element of $\Hc$, i.e. $\bar{\hk}_\sig \in \HM$ and $\tr \, \bar{\hk}_\sig = 1$. Since the pair $\xi_\sig,\sig$ satisfies \eqref{eq:ext_cond_Michell} each $v_i(\sig)$ is an eigenvector for $\xi_\sig$ and moreover $\pairing{\xi_\sig, v_i(\sig) \otimes v_i(\sig) } = \sign{\bigl(\lambda_i(\sig)\bigr)}$, therefore
	\begin{equation*}
		 \bar{\hk}_\sig  \, \xi_\sig = \sum_{i=1}^{d} \frac{\abs{\lambda_i(\sig)}}{\dro(\sig)} \, \sign{\bigl(\lambda_i(\sig)\bigr)} \ v_i(\sig) \otimes v_i(\sig) = \frac{\sig}{\dro(\sig)},
	\end{equation*}
	which proves that $\bar{\hk}_\sig  \in \Hcc(\sig)$. The full characterization of the set $\Hcc(\sig)$ is difficult to write down for arbitrary $d$ hence further we shall proceed in dimension $d=2$, where three cases must be examined:
	
	\noindent\underline{Case a) the determinant of $\sig$ is negative}
	
	In this case $\sig$ has two non-zero eigenvalues of opposite sign, let us say: $\lambda_1(\sig)<0$ and $\lambda_2(\sig)>0$. Therefore there exists a unique $\xi = \xi_\sig$ that satisfies $\rho(\xi_\sig) \leq 1$ and is in the extremal relation \eqref{eq:ext_cond_Michell} with $\sig$: there must hold $\xi_\sig = - v_1(\sig) \otimes v_1(\sig) + v_2(\sig) \otimes v_2(\sig)$ where $v_1(\sig),v_2(\sig)$ are the respective eigenvectors of $\sigma$. According to point (iii) of Theorem \ref{thm:rho_drho} there must hold $\Hcc(\sig) \subset \Hch(\xi_\sig)$ and thus from \eqref{eq:Hch_Michell} we deduce that each $\hk\in \Hch(\sig)$ satisfies $H = \sum_{i=1}^2 \alpha_i \,v_i(\sig) \otimes v_i(\sig) \otimes v_i(\sig) \otimes v_i(\sig)$ for $\alpha_1+\alpha_2 = 1$. Then the constitutive law \eqref{eq:criteria_for_optimality_of_H} enforces $\sig/\dro(\sig) = -\alpha_1\, v_1(\sig) \otimes v_1(\sig) + \alpha_2\, v_2(\sig) \otimes v_2(\sig)$ and we immediately obtain that $\alpha_i = \abs{\lambda_i(\sig)}/\dro(\sig)$ and therefore $\hk$ must coincide with $\bar{\hk}_\sig$ from \eqref{eq:optimal_H_for_Michell}. In summary, in the case when $d = 2$ and $\mathrm{det}\,\sig <0$ the set $\Hcc(\sig)$ is a singleton:
	\begin{equation}
		\label{eq:Hcc_char_Michell_negative}
		\Hcc(\sig) =  \biggl\{\sum_{i=1}^{2} \frac{\abs{\lambda_i(\sig)}}{\dro(\sig)} \ v_i(\sig) \otimes v_i(\sig) \otimes v_i(\sig) \otimes v_i(\sig) \biggr\},
	\end{equation} 
	while $\Hch(\xi_\sig)$ is the convex hull of $\bigl\{  v_i(\sig) \otimes v_i(\sig) \otimes v_i(\sig) \otimes v_i(\sig) \,:\, i\in \{1,2\} \bigr\}$.
	
	\noindent\underline{Case b) the determinant of $\sig$ is positive}
	
	Without loss of generality we may assume that $\lambda_1(\sig),\lambda_2(\sig) > 0$. Once again there is unique $\xi_\sig$ with $\rho(\xi_\sig) = 1$ and satisfying \eqref{eq:ext_cond_Michell}: necessarily $\xi_\sig = \mathrm{I}$. Therefore any unit vector $\eta$ is an eigenvector of $\xi_\sig$ (but not necessarily of $\sig$) with eigenvalue equal to one and thus $\Hch(\xi_\sig) = \mathrm{conv}\bigl\{ \eta \otimes \eta \otimes \eta \otimes \eta \,:\, \eta \in S^{d-1} \bigr\}$. Therefore the inclusion $\Hcc(\sig) \subset \Hch(\xi_\sig)$ merely indicates that for $H \in \Hcc(\sig)$ there must hold $H = \sum_{i=1}^m \alpha_i\, \eta_i \otimes \eta_i \otimes \eta_i \otimes \eta_i$ where $m \in \mathbbm{N}$, $\eta_i\in S^{d-1}$ and $\sum_{i=1}^m \alpha_i = 1$. By plugging this form of $H$ into \eqref{eq:criteria_for_optimality_of_H} we obtain the characterization for a positive definite $\sig$ (recall that $\pairing{\eta_i \otimes \eta_i,\xi_\sig}\!=\!1$ for each $i$)
	\begin{equation}
		\label{eq:Hcc_char_Michell_positive}
		\Hcc(\sig) = \biggl\{ \sum_{i=1}^m \alpha_i\, \eta_i \otimes \eta_i \otimes \eta_i \otimes \eta_i \ : \ \eta_i\in S^{d-1},\ \alpha_i\geq 0,\ \sum_{i=1}^m \alpha_i = 1, \ \frac{\sig}{\tr \,\sig}= \sum_{i=1}^m \alpha_i\, \eta_i \otimes \eta_i \biggr\},
 	\end{equation}
 	where we used the fact that $\dro(\sig) = \tr\,\sig$ for any positive semi-definite $\sig$.
 	
 	With the following example we show that optimal Hooke tensor for positive definite $\sig$ is highly non-unique and the characterization above cannot be sensibly simplified. With $e_1,e_2$ denoting a Cartesian base of $\Rd$ we consider $\sig = \frac{4}{5} \,e_1\otimes e_1+\frac{1}{5}\, e_2\otimes e_2$, we see that $\tr\,\sig = 1$. By \eqref{eq:Hcc_char_Michell_positive} it is clear that $H_1 =\frac{4}{5} \,e_1\otimes e_1\otimes e_1\otimes e_1\otimes e_1+\frac{1}{5}\, e_2\otimes e_2 \otimes e_2\otimes e_2\otimes e_2 $ is optimal for $\sig$ and it is the expected solution: it is the universally optimal tensor $\bar{\hk}_\sig$ given in \eqref{eq:optimal_H_for_Michell}. Next we choose non-orthogonal vectors $\eta_1 = \frac{2}{\sqrt{5}}\, e_1 + \frac{1}{\sqrt{5}}\, e_2$ and $\eta_2 = \frac{2}{\sqrt{5}}\, e_1 - \frac{1}{\sqrt{5}}\, e_2$ and we may check that the tensor $H_2 = \sum_{i=1}^2 \frac{1}{2} \,\eta_i\otimes \eta_i\otimes \eta_i\otimes \eta_i\otimes \eta_i$ is an element of $\Hcc(\sig)$ according to \eqref{eq:Hcc_char_Michell_positive}. Since $H_1 \neq H_2$ it becomes clear that elements of $\Hcc(\sig)$ for positive definite $\sig$ may be constructed in many ways.
 	
 	\noindent\underline{Case c) $\sig$ is of rank one}
 	 
 	It is not restrictive to assume that $\lambda_1(\sig) = 0$, $\lambda_2(\sig)>0$ and so $\sig = \lambda_2(\sig) \,v_2(\sig) \otimes v_2(\sig)$. In this case there are infinitely many $\xi_\sig$ such that $\rho(\xi_\sig) = 1$ and \eqref{eq:ext_cond_Michell} holds. We can, however, test \eqref{eq:criteria_for_optimality_of_H} with only one: $\xi_\sig := v_2(\sig) \otimes v_2(\sig)$ for which $\Hch(\xi_\sig) = \big\{ v_2(\sig) \otimes v_2(\sig) \otimes v_2(\sig) \otimes v_2(\sig) \bigr\}$ which is necessarily equal to $\Hcc(\sig)$ due to point (iii) of Theorem \ref{thm:rho_drho}. Eventually, for a rank-one stress $\sig$ the set of optimal Hooke tensors may be written as a singleton
 	\begin{equation}
 		\label{eq:Hcc_char_Michell_rank_one}
 		\Hcc(\sig) = \left\{ \frac{\sig}{\abs{\sig}} \otimes \frac{\sig}{\abs{\sig}}\right\},
 	\end{equation}
	where we used the fact that $\dro(\sig) = \abs{\sig}$ for $\sig$ of rank one. By comparing to Example \ref{ex:rho_drho_AMD} we learn that the AMD and FibMD problems furnish the same optimal Hooke tensor at points where $\sig$ is rank-one.

	\begin{remark}
	\label{rem:FibMD_Michell}
	The pair of variational problems $\Prob$ and $\dProb$ with $\ro$ and $\dro$ specified above are well known to constitute the \textit{Michell problem} which is the one of finding the least-weight  truss-resembling structure in $d$-dimensional domain $\Ob$, cf. \cite{strang1983} and \cite{bouchitte2008}. An extensive coverage of the Michell structures may be found in \cite{Lewinski2019}. Typically one poses the Michell problem in the so-called plastic design setting, namely the structure is not a body that undergoes elastic deformation, it is merely a body made of perfectly rigid-plastic material and is being designed to work under given stress regime. Herein the Michell problem is recovered as a special case of the Free Material Design problem for elastic body: we start with the set $\Hax$ of uni-axial Hooke tensors that is supposed to mimic the truss-like behaviour of the design structure. Mathematical argument requires that $\Hax$ be convexified to $\HM$ and eventually the optimal structure is made of a fibrous-like material. Another work where a link between the Michell problem and optimal design of elastic body was made is \cite{bourdin2008} where the Michell problem was recovered as the asymptotic limit for structural topology design problem in the high-porosity regime.
	\end{remark}
\end{example}

\begin{example}[\textbf{Fibrous Material Design problem with dissymmetry in tension and compression}]
	\label{ex:rho_drho_FibMD_plus_minus}
	We revisit the problem of Fibrous Material Design with the linear constitutive law replaced by the constitutive law for material that responds differently in tension and compression (the design problem will be further abbreviated by FibMD$\pm$), i.e. we take
	\begin{equation}
		\label{eq:FibMD_setting_plus_minus}
		\Hs = \HM, \qquad j_\pm(\hk,\xi) = (\kappa_+)^2 \, j_+(\hk,\xi) + (\kappa_-)^2 \, j_-(\hk,\xi), \qquad \cost(\hk) = \tr\, \hk
	\end{equation}
	where $j_+, j_-$ are computed for $j(\hk,\xi) = \frac{1}{2} \pairing{\hk\,\xi,\xi}$, i.e. $p=2$, see Example \ref{ex:dissymetru_tension_compresion}. In contrast to Example \ref{ex:rho_drho_FibMD} we have no linearity of $j_\pm$ with respect to $\hk$ and therefore for given $\xi \in \Sdd$ we must test $j_\pm(\hk,\xi)$ with tensors $\hk$ in the whole $\HM$ instead of just $\Hs_{axial}$. We start with a remark: for every $\xi \in \Sdd$ there exist $\zeta_1 \in \mathcal{S}^{d\times d}_+$ and $\zeta_2 \in \mathcal{S}^{d\times d}_-$ such that $\xi +\zeta_1 = \xi_+$ and $\xi +\zeta_2 = \xi_-$ where $\xi_+ = \sum_i \max\{\lambda_i(\xi),0\}\, v_i(\xi) \otimes v_i(\xi)$ and $\xi_- = \sum_i \min\{\lambda_i(\xi),0\}\, v_i(\xi) \otimes v_i(\xi)$ are, respectively, the positive and negative part of the tensor $\xi$. Then, for any $\hk \in \Hc$, i.e. for $\hk = \sum_{i =1}^m \alpha_i\,\eta_i \otimes \eta_i \otimes \eta_i \otimes \eta_i$ with $\sum_{i=1}^m \alpha_i=1$, we estimate
	\begin{equation*}
		j_+(H,\xi) = \inf\limits_{\zeta \in \mathcal{S}^{d\times d}_+} \biggl\{ \sum_{i =1}^m \frac{\alpha_i}{2} \bigl(\pairing{\xi + \zeta,\eta_i \otimes \eta_i}\bigr)^2 \biggr\} \leq \sum_{i =1}^m \frac{\alpha_i}{2} \bigl(\pairing{\xi_+,\eta_i \otimes \eta_i}\bigr)^2
	\end{equation*}
	and by repeating an analogous estimate for $j_-$ we obtain
	\begin{alignat*}{1}
		j_\pm(H,\xi) \leq \sum_{i =1}^m \frac{\alpha_i}{2} \biggl(\bigl(\kappa_+\pairing{\xi_+,\eta_i \otimes \eta_i}\bigr)^2 + \bigl( \kappa_- \pairing{\xi_-,\eta_i \otimes \eta_i}\bigr)^2 \biggr) \leq \sum_{i =1}^m \frac{\alpha_i}{2} \biggl(\pairing{\bigl(\kappa_+\xi_+ - \kappa_- \xi_-\bigr),\eta_i \otimes \eta_i} \biggr)^2
	\end{alignat*}
	where we used the fact that $\bigl(\kappa_+\pairing{\xi_+,\eta_i \otimes \eta_i}\bigr) \, \bigl( \kappa_- \pairing{\xi_-,\eta_i \otimes \eta_i}\bigr) \leq 0$. We see that for any $\hk\in \Hc$ we have
	\begin{equation*}
		j_\pm(H,\xi) \leq \sum_{i =1}^m \frac{\alpha_i}{2} \bigl( \rho_\pm(\xi)\bigr)^2 \leq \frac{1}{2} \bigl( \rho_\pm(\xi)\bigr)^2
	\end{equation*}
	where, upon denoting by $\rho$ the spectral norm from \eqref{eq:spectral_rho}, we introduce
	\begin{equation*}
		\rho_\pm(\xi) := \rho\bigl(\kappa_+\,\xi_+ - \kappa_- \,\xi_-\bigr) =  \max\limits_{i \in \{1,\ldots,d\}} \biggl\{ \max \bigl\{ \kappa_+ \lambda_i(\xi),-\kappa_- \lambda_i(\xi) \bigr\} \biggr\}.
	\end{equation*}
	By choosing $\eta$ parallel to a suitable eigenvector of $\xi$ we easily obtain $\jh_\pm(\xi) \geq j_\pm(\eta\otimes \eta\otimes \eta\otimes \eta,\xi) = \frac{1}{2} \bigl( \rho_\pm(\xi)\bigr)^2$. The two estimates furnish $\jh_\pm(\xi) = \frac{1}{2} \bigl( \rho_\pm(\xi)\bigr)^2$ hence $\rho_\pm$ is the gauge function for the FibMD$\pm$ problem. We observe that
	\begin{equation*}
		\rho_\pm(\xi) \leq 1 \qquad \Leftrightarrow \qquad -\frac{1}{\kappa_-} \leq \lambda_i(\xi) \leq \frac{1}{\kappa_+} \quad \forall\, i \in \{1,\ldots,d\}.
	\end{equation*}
	For $\sig \in \Sdd$ the polar $\rho_\pm^0$ reads
	\begin{equation*}
		\rho_\pm^0(\sig) = \sum_{i=1}^{d} \max \left\{ \frac{1}{\kappa_+} \lambda_i(\sig),-\frac{1}{\kappa_-} \lambda_i(\sig) \right\}= \frac{1}{2} \left( \frac{1}{\kappa_+} - \frac{1}{\kappa_-}\right) \tr\,\sig + \frac{1}{2} \left( \frac{1}{\kappa_+} + \frac{1}{\kappa_-}\right) \dro(\sig)
	\end{equation*}
	where $\dro$ is the polar to the spectral norm, see \eqref{eq:spectral_rho}; it is worth to note that $\tr\, \sig$ enters the formula with a sign. The formula for $\rho_\pm^0$ was already reported in Section 3.5 in \cite{Lewinski2019}. The extremality conditions between $\xi$ and $\sig$ for $\rho_\pm$ and $\rho_\pm^0$ are very similar to those displayed for FibMD problem (see \eqref{eq:ext_cond_Michell}) thus we shall neglect to write them down. The same goes for characterizations of the sets $\Hch(\xi)$ and $\Hcc(\sig)$; we merely show a formula for
	\begin{equation*}
		\bar{\hk}_\sig = \sum_{i=1}^{d} \frac{\max \bigl\{ \frac{1}{\kappa_+} \lambda_i(\sig),-\frac{1}{\kappa_-} \lambda_i(\sig) \bigr\}}{\rho_\pm^0(\sig)} \ v_i(\sig) \otimes v_i(\sig) \otimes v_i(\sig) \otimes v_i(\sig) 
	\end{equation*}
	being a universal (but in general non-unique) element of the set $\Hcc(\sig)$.
	
	\begin{remark}
		In Remark \ref{rem:FibMD_Michell} the FibMD problem, characterized by the spectral norm $\rho$ and its polar $\dro$ and posed for an elastic body, was recognized it as equivalent to the Michell problem of designing a truss-like plastic structure of minimum weight -- this observation was valid under the condition that the permissible stresses in the second model are equal in tension and compression, i.e. $\bar{\sigma}_+ = \bar{\sigma}_- \in \R_+$. The theory of plastic Michell structures is developed in the case $\bar{\sigma}_+ \neq \bar{\sigma}_-$ as well, see Section 3.4 in \cite{Lewinski2019}. If one chooses $\kappa_+/\kappa_- = \bar{\sigma}_+ / \bar{\sigma}_-$ in the FibMD$\pm$ problem then again the duality pair $\Prob$, $\dProb$ with gauges $\rho_\pm$, $\rho_\pm^0$ is the very same as the one appearing in the Michell problem with permissible stresses $\bar{\sigma}_+ \neq \bar{\sigma}_-$. To the knowledge of the present authors the FibMD$\pm$ problem is the first formulation for elastic structure design known in the literature that is directly linked to the Michell problem for uneven permissible stresses in tension and compression.
	\end{remark}

\end{example}

\begin{example}[\textbf{Isotropic Material Design problem}]
	\label{ex:rho_drho_IMD}
	The following variant of (FMD) problem is known as the \textit{Isotropic Material Design} problem (IMD), see \cite{czarnecki2015a}:
	\begin{equation}
	\label{eq:IMD_setting}
	\Hs = \Hs_{iso}, \qquad j(\hk,\xi) =   \frac{1}{2} \pairing{\hk \,\xi,\xi}, \qquad \cost(\hk) = \tr\, \hk,
	\end{equation}
	where $\Hs_{iso} = \bigl\{d  K \bigl( \frac{1}{d}\, \mathrm{I} \otimes \mathrm{I} \bigr) + 2\, G\, \bigl( \mathrm{Id}- \frac{1}{d}\, \mathrm{I} \otimes \mathrm{I} \bigr)\, : \, K,G\geq 0  \bigr\}$ is a two-dimensional closed convex cone of isotropic Hooke tensors in a $d$-dimensional body, $d\in \{2,3\}$. For any $\hk \in \Hs_{iso}$ and $\xi \in \Sdd$ we have $j(H,\xi) = \frac{1}{2}\,\bigl( K \abs{\tr\,\xi}^2 +  2G\, \abs{\mathrm{dev} \,\xi}^2 \bigr)$ where $\mathrm{dev} \,\xi = \xi - \frac{1}{d} \, (\tr\, \xi)\, \mathrm{I} = \bigl( \mathrm{Id}- \frac{1}{d}\, \mathrm{I} \otimes \mathrm{I} \bigr) \,\xi$ and $\abs{\mathrm{dev}\,\xi}$ denotes the Euclidean norm. It is well established that $\hk$ has a single eigenvalue $d K$ and $N(d)-1$ eigenvalues $2G$ (we recall that $N(d)=d\,(d+1)/2$) therefore $\tr\,\hk = d K + \bigl(N(d)-1\bigr)\,2G$. Upon introducing auxiliary variables $A_1 = d K$ and $A_2 = \bigl(N(d)-1\bigr)\,2G$ we obtain $\tr\,\hk = A_1+A_2$ and
	\begin{equation*}
		j(\hk,\xi) = \frac{1}{2} \left( A_1  \biggl(\frac{\abs{\tr \,\xi}}{\sqrt{d}} \biggr)^2 + A_2  \biggl(\frac{\abs{\mathrm{dev} \,\xi}}{\sqrt{N(d)-1}} \biggr)^2\right).
	\end{equation*} 
	Thus we have
	\begin{equation*}
		\jh(\xi) = \max_{\hk \in \Hc} j(\hk,\xi) = \max_{A_1,A_2 \geq 0} \bigl\{j(\hk,\xi) \, :\, A_1+A_2 \leq 1 \bigr\} = \frac{1}{2} \bigl( \rho(\xi)\bigr)^2 
	\end{equation*}
	where
	\begin{equation}
		\label{eq:rho_IMD}
		\rho(\xi) = \max\left\{ \frac{\abs{\tr \,\xi}}{\sqrt{d}}, \frac{\abs{\mathrm{dev} \,\xi}}{\sqrt{N(d)-1}} \right\},
	\end{equation}
	while, for a non-zero $\xi \in \Sdd$
	\begin{equation*}
		\Hch(\xi) \!= \!\biggl\{ \hk\in \Hs_{iso}  :  d K+ \bigl(N(d)-1\bigr)2G = 1,\, \biggl(\frac{\abs{\tr \,\xi}}{\sqrt{d}}-\rho(\xi)\biggr) K =0,\ \biggl(\frac{\abs{\mathrm{dev} \,\xi}}{\sqrt{N(d)-1}} - \rho(\xi) \biggr) G=0  \biggr\}.
	\end{equation*}
	By using the fact that $\pairing{\xi,\sig} = \frac{1}{d} (\tr\,\xi)(\tr\,\sig) + \pairing{\mathrm{dev}\,\xi,\mathrm{dev}\,\sig}$ we arrive at the polar
	\begin{equation}
		\label{eq:drho_IMD}
		\dro(\sig) = \frac{1}{\sqrt{d}} \, \abs{\tr\,\sig} + \sqrt{N(d)-1}\, \abs{\mathrm{dev}\,\sig}
	\end{equation}
	and the extremality conditions for non-zero $\xi,\sig$ follow:
	\begin{equation}
		\label{eq:ext_cond_IMD}
		\pairing{\xi,\sig} = \ro(\xi) \, \dro(\sig) \qquad \Leftrightarrow \qquad
		\left\{
		\begin{array}{l}
		\tr\, \sig = \abs{\tr\,\sig}\, \frac{\tr\,\xi}{\sqrt{d} \,\rho(\xi)}, \\
		\mathrm{dev}\,\sig = \abs{\mathrm{dev}\,\sig}\, \frac{\mathrm{dev}\,\xi}{\sqrt{N(d)-1} \,\rho(\xi)}.
		\end{array}
	\right.	   
	\end{equation}
	In order to  characterize optimal Hooke tensors for non-zero $\sig$ we use point (ii) of Theorem \ref{thm:rho_drho}: $\hk$ is an element of $\Hcc(\sig)$ if and only if, for any $\xi = \xi_\sig$ satisfying $\rho(\xi_\sig)=1$ and the extremality conditions above, the constitutive law $\sig/\dro(\sig) = H\,\xi_\sig$ holds, which, considering \eqref{eq:ext_cond_IMD}, may be rewritten as:
	\begin{equation*}
		\frac{1}{\dro(\sig)} \left( \frac{1}{d}\,\abs{\tr\,\sig} \,\frac{\tr\,\xi_\sig}{\sqrt{d}} \, \mathrm{I} + \abs{\mathrm{dev}\,\sig} \, \frac{\mathrm{dev}\,\xi_\sig}{\sqrt{N(d)-1}}   \right) = K (\tr\,\xi_\sig) \, \mathrm{I} + 2G \, \mathrm{dev}\,\xi_\sig.
	\end{equation*}
	It is easy to see that for any $\sig$ the tensor $\xi_\sig$ may be chosen so that both $\tr\,\xi_\sig \neq 0$ and $\mathrm{dev}\,\xi_\sig \neq 0$  and then comparing the left and right-hand side above yields
	\begin{equation}
		\label{eq:Hcc_char_IMD}
		\Hcc(\sig) = \biggl\{ \hk\in \Hs_{iso} \ : \ K = \frac{1}{d\sqrt{d}}\, \frac{\abs{\tr\,\sig}}{\dro(\sig)}, \ G = \frac{1}{2 \sqrt{N(d)-1}}\, \frac{\abs{\mathrm{dev}\,\sig}}{\dro(\sig)}  \biggr\}.
	\end{equation}
	We notice that $\Hcc(\sig)$ is always a singleton for non-zero $\sig$, while $\Hch(\xi)$ may be a one dimensional affine subset of $\Hs_{iso}$, provided $\abs{\tr\,\xi}/\sqrt{d} = \abs{\mathrm{dev}\,\xi} /\sqrt{N(d)-1} = \rho(\xi) \neq 0$.
	
	\begin{example}[\textbf{Isotropic Material Design in the case of the power-law }]
		\label{ex:power_law}
		For $p\in (1,\infty)$ different than 2 one may propose a generalization of the constitutive law of linear elasticity. The conditions (H1)-(H5) can be easily satisfied if one assumes admissible Hooke tensors to be isotropic, i.e. again $\hk \in \Hs = \Hs_{iso}$. For instance we may choose
		\begin{equation*}
			j(\hk,\xi) = \frac{1}{p}\,\biggl( K \abs{\tr\,\xi}^p +  2G\, \abs{\mathrm{dev} \,\xi}^p \biggr)
		\end{equation*}
		where once more the moduli $K,G \geq 0$ identify an isotropic Hooke tensor $\hk \in \Hs_{iso}$ by means of \eqref{eq:iso_K_G}. A similar potential is proposed in \cite{castaneda1998} and referred to as the \textit{power-law} potential. The authors therein, however, allow to choose different exponents for the two tensor invariants $\abs{\tr\,\xi}$ and  $\abs{\mathrm{dev} \,\xi}$, whilst here the assumption (H\ref{as:p-hom}) obligates us to apply a common exponent $p$. Naturally the results from Example \ref{ex:rho_drho_IMD} hold here with only slight modifications, for instance 
		\begin{equation*}
			\rho(\xi) = \max\left\{ \frac{\abs{\tr \,\xi}}{d^{\,1/p}} , \frac{\abs{\mathrm{dev} \,\xi}}{\bigl(N(d)-1\bigr)^{1/p}}\right\}, \qquad 	\dro(\sig) = \frac{1}{d^{\,1/p'}} \, \abs{\tr\,\sig} + \bigl(N(d)-1\bigr)^{1/p}\, \abs{\mathrm{dev}\,\sig}.
		\end{equation*}
	\end{example} 
	
\end{example}

\begin{remark}
	The \textit{Cubic Material Design} problem (CMD) considered in the paper \cite{czubacki2015} \textit{a priori} lies outside the scope of the present contribution. The set $\Hs_{cubic}$ of all the Hooke tensors of cubic symmetry is not a convex set, which is due to distinction of anisotropy directions. Thus $\Hs_{cubic}$ cannot be directly chosen as $\Hs$ herein. Nevertheless, in case when $j(\hk,\xi) = \frac{1}{2} \pairing{\hk \,\xi,\xi}$ and $\cost(\hk) = \tr \hk$, it turns out that the set of solutions of the problem $\max_{\hk \in \Hs_{cubic}, \ \cost(\hk) \leq 1} j(\hk,\xi)$ is convex for any $\xi \in \Sdd$, see \cite{czubacki2015} for details. This implies that the original CMD problem can be recovered as a special case of the (FMD) problem provided we set $\Hs = \mathrm{conv} \bigl(\Hs_{cubic}\bigr)$. We shall not formulate this result rigorously herein.
\end{remark}

\subsection{Examples of solutions of the  Free Material Design problem in settings: AMD, FibMD, FibMD$\pm$ and IMD}

For one load case $\Fl$ that simulates the uni-axial tension we are to solve a family of Free Material Design problems in several settings listed in this paper. Thanks to Theorem \ref{thm:FMD_LCP} we may solve the corresponding (LCP) problem instead, for which we have at our disposal the optimality conditions from Theorem \ref{thm:optimality_conditions}. Our strategy will be to first put forward a competitor $u,\mu,\sig,\hf$ for which we shall validate the optimality conditions. While solutions in Cases a) and b) are fairly easy to guess, it is clear that solution (the exact coefficients) in Case c), i.e. for IMD problem, had to be derived first. We stress that displacement solutions $u$ are given up to a rigid body displacement function $u_0\in \U_0$. It is also worth explaining that the Hooke functions $\hf$ and their underlying moduli are given without physical units as they are normalized by the condition $\hf(x) \in \Hc$: one can see that the ultimate Hooke field is $\lambda = \hf \mu$ and the suitable units are included in the "elastic mass distribution" $\mu$; an analogous comment concerns the stress function $\sigma$.

\begin{example}[\textbf{Optimal material design of a plate under uni-axial tension test}]
	For a rectangle being a closed set $R = A_1 A_2 B_2 B_1 \subset \Rd = \R^2$ (we set $d=2$ the throughout this example) with $A_1 = (-a/2,-b/2)$, $A_2 = (-a/2,b/2)$, $B_1 = (a/2,-b/2)$ and $B_2 = (a/2,b/2)$ we consider a load 
	\begin{equation*}
		\Fl = F_q +F_Q, \qquad F_q=- q\,e_1\, \Ha^1\mres[A_1,A_2] +  q\,e_1\, \Ha^1\mres[B_1,B_2], \quad  F_Q=- Q\, e_1 \, \delta_{A_0} + Q \, e_1\, \delta_{B_0}
	\end{equation*}
	where $A_0 = (-a/2,0)$, $B_0 = (a/2,0)$ and $q$ and $Q$ are non-negative constants that represent, respectively, loads diffused along segments and point loads, see Fig. \ref{fig:optimal_structure}. It is straightforward to check that $\Fl$ is balanced. For $\Omega$ we can take any bounded domain such that $R \subset \Ob$.
	\begin{figure}[h]
		\centering
		\includegraphics*[width=0.65\textwidth]{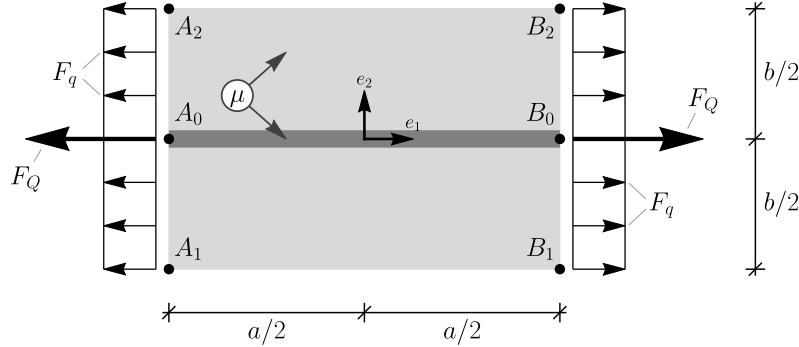}
		\caption{Graphical representation of load $\Fl = F_q +F_Q$ and optimal mass distribution $\mu$.}
		\label{fig:optimal_structure}       
	\end{figure}

	\noindent\underline{Case a) the Anisotropic Material Design}
	
	In the AMD setting where $\rho = \dro = \abs{\argu}$, see \eqref{eq:AMD_setting} in Example \ref{ex:rho_drho_AMD}, we propose the following quadruple
	\begin{alignat}{2}
		\label{eq:AMD_quadruple_1}
		u(x) &= x_1\,e_1, \qquad \qquad &\mu &= q \, \mathcal{L}^2 \mres R + Q \, \Ha^1 \mres [A_0,B_0],\\
		\label{eq:AMD_quadruple_2}
		\sig &= e_1 \otimes e_1, \qquad \qquad &\hf &= e_1 \otimes e_1 \otimes e_1 \otimes e_1.
	\end{alignat}
	We see that $\dro(\sig) =\abs{\sig} = 1$ and $\tr\,\hf = 1$, which are the initial assumptions in Theorem \ref{thm:optimality_conditions}. An elementary computation shows that $-\DIV(\sig \mu) = \Fl$, which gives the optimality condition (i) in Theorem \ref{thm:optimality_conditions}. The function $u$ is smooth and thus checking the condition $u\in \overline{\U}_1$ boils down to verifying  whether $\rho\bigl( e(u) \bigr) = \abs{e(u)} \leq 1$. We have $e(u) = e_1 \otimes e_1$ and clearly the optimality condition (ii) follows. Next we can choose which of the conditions (iii) or (iii)' in Theorem \ref{thm:optimality_conditions} we shall check. First we list essential elements of theory of space tangent to measure $\mu$ for $\mu$-a.e. $x$:
	\begin{equation*}
		\mathcal{S}_\mu(x) = \left\{
		\begin{array}{cl}
		\mathcal{S}^{2 \times 2}&  \text{for }  \mathcal{L}^2\text{-a.e. } x \in R,\\
		\mathrm{span}\,\{e_1 \otimes e_1\}  &  \text{for }  \Ha^1\text{-a.e. } x \in [A_0,B_0],
		\end{array}
		\right. \
			P_\mu(x) = \left\{
		\begin{array}{cl}
		\mathrm{I}&  \text{for }  \mathcal{L}^2\text{-a.e. } x \in R,\\
		e_1 \otimes e_1  &  \text{for }  \Ha^1\text{-a.e. } x \in [A_0,B_0],
		\end{array}
		\right.
	\end{equation*}
	see e.g. \cite{bouchitte2001}. Since $u$ is smooth we simply compute $e_\mu(u)(x) = P^{\top}_\mu(x)\, e(u)(x) \,P_\mu(x)$; having $e(u) = e_1 \otimes e_1$ we clearly obtain that $e_\mu(u) = e_1 \otimes e_1$ for $\mu$-a.e. $x$ as well. We check that $\pairing{\sig(x),e_\mu(u)(x)} = 1$ for $\mu$-a.e. $x$. In addition, since $\hf = \sig \otimes \sig$, we have $\hf \in \Hcc(\sig)$ (see \eqref{eq:Hcc_char_AMD}) and the last optimality condition (iii) follows. We have thus already proved that the quadruple $u,\mu,\sig,\hf$ is an optimal solution for the (LCP) problem and Theorem \ref{thm:FMD_LCP} furnishes a solution for the original Free Material Design problem in the AMD setting.
	
	For the sake of demonstration we will in addition check the condition (iii)' as well: to this purpose we must compute the formula for $j_\mu\bigl(\hf(x),\argu\bigr)$. For $\mathcal{L}^2$-a.e. $x\in R$ clearly $j_\mu\bigl(\hf(x),\xi \bigr) = j\bigl(\hf(x),\xi \bigr)$ since for such $x$ we have $\mathcal{S}_\mu^\perp(x) = \{0\}$. For $\Ha^1$-a.e. $x \in [A_0,B_0]$ we have $\pairing{e_1 \otimes e_1,\zeta} = 0$ whenever $\zeta \in \mathcal{S}_\mu^\perp(x)$ hence for any $\xi \in \mathcal{S}^{2 \times 2}$
	\begin{equation*}
		j_\mu\bigl(\hf(x),\xi \bigr) = \inf\limits_{\zeta \in \mathcal{S}^\perp_\mu(x)}  j\bigl(\hf(x),\xi +\zeta \bigr)  = \inf\limits_{\zeta \in \mathcal{S}^\perp_\mu(x)} \frac{1}{2} \bigl(\pairing{e_1 \otimes e_1,\xi +\zeta}\bigr)^2 = \frac{1}{2} \bigl(\pairing{e_1 \otimes e_1,\xi}\bigr)^2
	\end{equation*}
	and ultimately we obtain $j_\mu\bigl(\hf(x),\argu \bigr) = j\bigl(\hf(x),\argu \bigr)$ for $\mu$-a.e. $x$. Therefore, verifying the condition (iii)' boils down to checking if $\mu$-a.e. $\sig = \hf \, e_\mu(u)$ and this is straightforward.
	
	\noindent\underline{Case b) the Fibrous Material Design}
	
	In the case of Fibrous Material Design it is enough to shortly note that the quadruple $u,\mu,\sig,\hf$ proposed in Case a) is also optimal in the setting of FibMD problem: indeed, both $e(u)$ and $\sig$ are of rank one, thus spectral norm $\rho\bigl(e(u) \bigr)$ and its polar $\dro(\sig)$ (see \eqref{eq:spectral_rho}) coincide with $\abs{e(u)}$ and $\abs{\sig}$ respectively. Moreover, again for a rank-one field $\sig$, the sets $\Hcc(\sig)$ are identical for AMD and FibIMD, see \eqref{eq:Hcc_char_Michell_rank_one} and the comment below. An additional comment is that the field $\tilde{u} = \tilde{u}(x) = x_1\, e_1 + \beta(x_2) \, e_2$ will also be optimal for FibMD problem provided that $\beta:\R\rightarrow \R$ is 1-Lipschitz; note that this was not the case for AMD problem where $u$ was uniquely determined up to a rigid body displacement function. 
	
	Further, the same solution \eqref{eq:AMD_quadruple_1},\eqref{eq:AMD_quadruple_2} of (LCP) will be shared by the FibMD$\pm$ provided that one assumes $\kappa_+ = 1$. This is a consequence of $\sigma$ being positive definite $\mu$-a.e. 
	
	\noindent\underline{Case c) the Isotropic Material Design}
	
	For the IMD problem the norms $\rho$ and $\dro$ are given in \eqref{eq:rho_IMD} and \eqref{eq:drho_IMD} respectively. We put forward a quadruple that shall be checked for optimality in the IMD problem:
	\begin{alignat*}{2}
		u(x) &= \frac{2+\sqrt{2}}{2}\,x_1\,e_1 - \frac{2-\sqrt{2}}{2}\,x_2\,e_2, \qquad \qquad &\mu& = \frac{2+\sqrt{2}}{2}\ \biggl( q \, \mathcal{L}^2 \mres R + Q \, \Ha^1 \mres [A_0,B_0] \biggr),\\
		\sig &= \frac{2}{2+\sqrt{2}}\ e_1 \otimes e_1, \qquad\qquad &\hf& = 2K\,\biggl( \frac{1}{2}\,\mathrm{I} \otimes \mathrm{I}\biggr) + 2G\left(\mathrm{Id} - \frac{1}{2}\mathrm{I} \otimes \mathrm{I} \right)
	\end{alignat*}
	with
	\begin{equation}
		\label{eq:optimal_K_G}
		K = \frac{1}{2+2\sqrt{2}}, \qquad  G = \frac{1}{4+2\sqrt{2}}.
	\end{equation}
	First we check that $\tr \,\hf = 2K+2\cdot2G = 1$, thus $\hf \in \Hc$ as assumed in Theorem \ref{thm:optimality_conditions}. Since the force flux $\tau = \sig \mu$ is identical to the one from Case a) the optimality condition (i) in Theorem \ref{thm:optimality_conditions} clearly holds. The function $u$ is again smooth so we compute $e(u) = \frac{2+\sqrt{2}}{2}\,e_1 \otimes e_1 - \frac{2-\sqrt{2}}{2}\,e_2 \otimes e_2$ and
	\begin{equation*}
		\tr\bigl(e(u)\bigr) = \sqrt{2}, \quad \abs{\mathrm{dev}\bigl(e(u) \bigr)} = \abs{e_1 \otimes e_1 - e_2 \otimes e_2} = \sqrt{2}, \quad \tr\,\sig = \frac{2}{2+\sqrt{2}}, \quad  \abs{\mathrm{dev}\,\sig} = \frac{1}{1+\sqrt{2}}
	\end{equation*}
	yielding
	\begin{equation*}
		\rho\bigl( e(u) \bigr) = \max\left\{ \abs{\tr\bigl(e(u)\bigr)}/\sqrt{2}\,,\, \abs{\mathrm{dev}\bigl(e(u) \bigr)}/\sqrt{2} \right\} =1, \qquad \dro(\sig) = \frac{1}{\sqrt{2}} \, \abs{\tr\,\sig} + \sqrt{2}\, \abs{\mathrm{dev}\,\sig} = 1
	\end{equation*}
	and therefore $u \in \overline{\U}_1$, which validates the optimality condition (ii); moreover $\dro(\sig) = 1$ as required in Theorem \ref{thm:optimality_conditions}. We move on to check the last optimality condition in version (iii). Since $\mu$ above is coincides with the one from Case a) (up to multiplicative constant), the formulas for $\mathcal{S}_\mu$ and $P_\mu$ derived therein are also correct here. Due to smoothness of $u$ we have $\mu$-a.e. $\pairing{\sig,e_\mu(u)} = \pairing{\sig, P^{\top}_\mu\, e(u) \,P_\mu} = \pairing{\sig,e(u)}$, where we used the fact that $\sig \in \mathcal{S}_\mu\ $ $\mu$-a.e. We easily check that $\pairing{\sig,e(u)} = 1$ and the extremality condition $\pairing{\sig,e_\mu(u)} = 1$ follows. Then one may easily check that the moduli $K, G$ agree with the characterization of the set $\Hcc(\sig)$ in \eqref{eq:Hcc_char_IMD}, hence $\hf \in \Hcc(\sig)\ $ $\mu$-a.e. and the optimality condition (iii) follows proving that the quadruple $u,\mu,\sig,\hf$ is indeed optimal for (LCP) problem in the IMD setting.
	
	In order to be complete we will show that the optimality condition (iii)' holds as well. It is clear that for $\mathcal{L}^2$-a.e. $x \in R$, where $\mathcal{S}_\mu(x) = \mathcal{S}^{2 \times 2}$, we have $j_\mu\bigl(\hf(x), \argu \bigr) =j\bigl(\hf(x), \argu \bigr)$. Meanwhile for $\Ha^1$-a.e. $x \in [A_0,B_0]$ the tensors $\zeta \in \mathcal{S}^\perp_\mu(x)$ are exactly those of the form $\zeta = e_2 \diamond \eta$ where $\eta \in \R^2$ and $\diamond$ denotes the symmetrized tensor product. Hence, for $\Ha^1$-a.e. $x \in [A_0,B_0]$ after performing the minimization (being non-trivial here) we obtain
	\begin{equation}
		\label{eq:uniaxial_constitutive_law}
		j_\mu\bigl(\hf(x),\xi \bigr) = \inf\limits_{\eta \in \R^2}  j\bigl(\hf(x),\xi + e_2 \diamond \eta \bigr)  = \frac{1}{2} \frac{4 K G}{K+G} \ \pairing{e_1 \otimes e_1 \otimes e_1 \otimes e_1,\xi\otimes \xi},
	\end{equation}
	where the constant $\frac{4 K G}{K+G}$ can be readily recognized as Young modulus $E$, cf. \eqref{eq:Young_and_Poisson}. For chosen $\xi$ the minimizer $\eta = \eta_\xi$ above is exactly the one for which $\hf(x)\,(\xi + e_2 \diamond \eta_\xi) = s\,e_1\otimes e_1 $ for $s \in \R$. The potential $j_\mu$ in \eqref{eq:uniaxial_constitutive_law} induces the well-known uni-axial constitutive law in the bar $[A_0,B_0]$ that spontaneously emerges as a singular (with respect to $\mathcal{L}^2$) part of $\mu$. Upon computing: $e_\mu(u) (x)= e(u)(x)$ for $\mathcal{L}^2$-a.e. $x \in R$ and $e_\mu(u(x)) = \frac{2+\sqrt{2}}{2}\, e_1\otimes e_1$ for $\Ha^1$-a.e. $x\in [A_0,B_0]$, we see that eventually verifying condition (iii)' boils down to checking if
	\begin{equation*}
		\sig(x) = \left\{
		\begin{array}{cl}
		\hf(x) \, e(u)(x) &  \text{for }  \mathcal{L}^2\text{-a.e. } x \in R,\\
		\left(\frac{4 K G}{K+G} \ e_1 \otimes e_1 \otimes e_1 \otimes e_1 \right)\bigl(\frac{2+\sqrt{2}}{2} e_1\otimes e_1 \bigr) &  \text{for }  \Ha^1\text{-a.e. } x \in [A_0,B_0].
		\end{array}
		\right.
	\end{equation*}
	The equations above are verified after elementary computations; in particular using formulas \eqref{eq:optimal_K_G} for optimal $K,G$ gives the Young modulus and the Poisson ratio:
	\begin{equation}
		\label{eq:optimal_E_nu}
		E = \frac{4 K G}{K+G} = \left( \frac{2}{2+\sqrt{2}}\right)^2 = 6-4\sqrt{2}, \qquad \nu = \frac{K-G}{K+G}=3-2\sqrt{2}.
	\end{equation}
	
	We finish the example with an observation: the computed value of Young modulus $E$ turns out to be maximal among all pairs $K,G\geq 0$  satisfying $\tr \,\hf = 2K+2\cdot2G \leq 1$. This is not surprising, since the plate under tension test has minimum compliance whenever its relative elongation along direction $e_1$, which here equals $\check\eps := \pairing{\check{u}(a/2,0)-\check{u}(-a/2,0)\,,\,e_1/b}$, is minimal. It must be carefully noted that $\check{u}$ is a solution of (FMD) problem and not (LCP) problem, cf. Definitions \ref{def:LCP_solution}, \ref{def:FMD_solution} and Theorem \ref{thm:FMD_LCP}.
	For the Hooke law $\check\sig = \check\hf\,e(\check{u})$ with isotropic $\check\hf$ and $\check\sig = \check{s}\, e_1 \otimes e_1$ (representing uni-axial tensile stress)  it is well established that $\check\eps = \pairing{e(\check{u}),e_1 \otimes e_1} = \check{s} / \check{E}$. Since the stress coefficient $\check{s}$ is predetermined by the load we see that minimizing $\check{\eps}$ (or minimizing the compliance of the plate) reduces here to maximizing the Young modulus $\check{E}$. Since the cost assumed in the IMD problem was $c = \tr$ maximizing Young modulus is non-trivial and furnishes \eqref{eq:optimal_E_nu}, which includes the optimal Poisson ratio $\nu \cong 0.172$.
	
\end{example}

\section{The scalar settings of the Free Material Design problem}
\label{sec:outlook}

On many levels the presented paper has built upon the work \cite{bouchitte2007} on the optimal design of mass $\mu \in \Mes_+(\Ob)$. In some sense we have rigorously shown that the simultaneous design of the mass $\mu$ and the material's anisotropy described by Hooke tensor function $\hf \in L^\infty_\mu(\Ob;\Hs)$ consists of three steps:
\begin{enumerate}[(i)]
	\item computing functions $\rho=\rho(\xi)$ and $\dro = \dro(\sig)$ that, respectively, are maximum strain energy $j(\hk,\xi)$ and minimum stress energy $j^*(\hk,\sig) $ with respect to Hooke tensors $\hk \in \Hs$ of unit $c$-cost;
	\item finding the solutions $\hat{u}$ and $\hat{\tau}$ of the problems $\relProb$ and $\dProb$, formulated with the use of $\rho$ and $\dro$ respectively, and retrieving the optimal mass $\check{\mu} = \frac{\Totc}{Z} \dro(\hat{\tau})$;
	\item with $\check{\sig} = \frac{d \hat{\tau}}{d\check{\mu}} \in L^\infty_{\check{\mu}}(\Ob;\Sdd)$ finding point-wise the optimal Hooke tensor $\mathscr{C}(x) \in \Hc$ that for $\check{\mu}$-a.e. $x$ minimizes the stress energy $j^*\bigl(\mathscr{C}(x),\check{\sig}(x) \bigr)$, which may be done in a $\check{\mu}$-measurable fashion.
\end{enumerate}
The step (ii) alone is the essence of the approach for the optimal mass design presented in \cite{bouchitte2007}, where the functions $\rho$, $\dro$ are in fact data. At the same time it is the most difficult step here since the steps (i) and (iii) involve finite dimensional programming problems.

The present work concerns the problem of elasticity in two or three dimensional bodies, where the state function $u$ is vectorial and the differential operator is $e=e(u)$ being the symmetric part of the gradient. The framework of the paper \cite{bouchitte2007} is, however, far more general as \textit{a priori} it allows to choose any linear operator $A$, while the function $u$ may be either scalar or vectorial. The particular interest of the authors of \cite{bouchitte2007} is the case of $u:\Omega \rightarrow \R$ and $A = \nabla^2$ (the Hessian operator) that reflects the theory of elastic Kirchhoff plates (thin plates subject to bending). It appears that the theory of the Free Material Design problem herein developed is also easily transferable to problems other than classical elasticity and this last section shall serve as an outline of FMD theory in the context of two scalar problems: the aforementioned Kirchhoff plate problem and the stationary heat conductivity problem. 

\subsection{The Free Material Design problem for elastic Kirchhoff plates (second order scalar problem)}
\label{sec:FMD_for_plates}
For a plane bounded domain $\Omega\subset \R^2$ with Lipschitz boundary let there be given a first order distribution $f \in \D'(\R^2)$ with its support contained in $\Ob$. We assume that $f$ is balanced, i.e. $\pairing{u_0,f} = 0$ for any $u_0$ of the form $u_0(x) = \pairing{a,x}+b$ with $a \in \R^2$, $b\in \R$ ($u_0$ are functions of rigid plate out-of-plane displacements). With the cone of admissible Hooke tensors $\Hs$ and energy function $j:\Hs \times \Sdd \rightarrow \R$ defined as in Section \ref{sec:elasticity_problem}, for a Hooke tensor field given by a measure $\lambda \in \Mes(\Ob;\Hs)$ (the term \textit{bending stiffness field} would be more suited) we define compliance of an elastic Kirchhoff plate:
\begin{equation}
	\label{eq:compliance_def_plate}
	\Comp(\lambda) = \sup \left\{ f(u) - \int j\bigl(\lambda,\nabla^2 u\bigr) \ : \ u \in \D(\R^2) \right\},
\end{equation}
where the scalar function $u$ represents the plate deflection. With the compliance expressed as above the Free Material Design problem for Kirchhoff plates is formulated exactly as in the case of elasticity, i.e.  $\Cmin = \min \bigl\{ \Comp(\lambda) \ : \ \lambda \in \MesHH, \ \int \cost(\lambda) \leq \Totc \bigr\}$.

Since the elastic potential $j$ remains unchanged with respect to classical elasticity, the energy functional $J_\lambda$ from \eqref{eq:J_lambda} is identical as well and therefore Propositions \ref{prop:Carath}, \ref{prop:usc_j}, \ref{prop:usc_J} follow directly. Next it is straightforward to observe that in Theorem \ref{thm:problem_P} we do not utilize the structure of the operator $e$ and a counterpart of the result for operator $\nabla^2$ instead yields a pair of mutually dual problems:
\begin{alignat*}{2}
	&\relProb \qquad \qquad Z &&= \max \biggl\{ f(u) \ : \ u \in \overline\V_1 \biggr\} \qquad \qquad\\
	&\dProb\qquad \qquad &&=\min \biggl\{ \int \dro(\chi) \ : \ \chi \in \MesT, \ \DIV^2  \chi = f \biggr\}  \qquad \qquad
\end{alignat*}
where the functions $\rho$ and its polar $\dro$ are defined exactly as in Section \ref{sec:FMD_problem}, see \eqref{eq:jh_rho}. Above for the maximization problem we have already given its relaxed version where (see \cite{bouchitte2007} for details) $\overline\V_1$ is the closure of the set $\V_1 = \bigl\{ u \in \D(\R^2)\, :\, \rho(\nabla^2 u) \leq 1 \ \text{ in }\Omega \bigr\}$ in the norm topology of $C^1(\Ob)$. Proposition \nolinebreak 6 in \cite{bouchitte2007} offers a characterization
\begin{equation*}
	\overline\V_1 = \biggl\{ u \in W^{2,\infty}(\Omega) \ : \ \rho(\nabla^2 u) \leq 1 \ \text{ a.e. in } \Omega \biggr\},
\end{equation*}
which tells us that the problem $\relProb$ above admits a solution $\hat{u}$ whose first derivative is Lipschitz continuous (note that no analogous characterization was available for the elasticity case). The second order equilibrium equation $\DIV^2 \chi = f$ in $\dProb$ renders the tensor valued measure $\chi$ a \textit{bending moment field}. 

It is clear that Section \ref{sec:anisotropy_at_point} on the point-wise maximization and minimization of energy functions $j$ and $j^*$ respectively remains valid here since the definitions of $j$, $\Hs$ and $c$ did not change. Consequently Lemma \ref{lem:measurable_selection} on the existence of an optimal measurable Hooke tensor function $\hf$ still holds true. Eventually, with Linear Constrained Problem defined for the pair $\relProb$ and $\dProb$ above, the analogue of Theorem \ref{thm:FMD_LCP} paves a way to constructing the solution of the (FMD) problem for Kirchhoff plates based on the solution of (LCP). Thereby we have sketched how the Sections \ref{sec:FMD_problem}, \ref{sec:FMD_LCP} on the (FMD) theory for elasticity can be translated to the setting of Kirchhoff plates; of course the contribution \cite{bouchitte2007} played a key factor. The Section \ref{sec:optimality_conditions} on the optimality conditions could be adjusted as well, yet this would be more involved as it requires more insight on the theory of the $\mu$-intrinsic counterpart of the second order operator $\nabla^2$; the reader is referred to \cite{bouchitte2003} for details.

\subsection{The Free Material Design problem for heat conductor (first order scalar problem)}
\label{sec:FMD_for_heat_cond}

In this section $\Om \subset \Rd$ is any bounded domain in $d$-dimensional space ($d$ may equal 2 or 3) with Lipschitz boundary. The heat inflow and the heat outflow shall be given by two positive, mutually singular Radon measures $f_+\in \Mes_+(\Ob)$ and $f_- \in \Mes_+(\Ob)$ respectively; we assume the measures to be of equal mass: $f_+(\Ob) = f_-(\Ob)$.

Next, let $\mathscr{A}$ be a set of admissible conductivity tensors being any closed convex cone contained in the set of symmetric positive semi-definite 2nd-order tensors $\mathcal{S}^{d \times d}_+$. The constitutive law of conductivity will be determined by the energy $j_1:\mathscr{A} \times \Rd \rightarrow  \R$ that for some $p \in (1,\infty)$ meets assumptions analogous to (H1)-(H5) for function $j:\Hs \times \Sdd \rightarrow \R$. The compliance or the potential energy of the conductor given by a tensor-valued measure $\alpha \in \Mes(\Ob;\mathscr{A})$ may be defined as 
\begin{equation}
\label{eq:compliance_def_cond}
\Comp(\alpha) = \sup \left\{ \int u \, df - \int j_1\bigl(\alpha,\nabla u\bigr) \ : \ u \in \D(\R^d) \right\}
\end{equation}
where we put $f = f_+ - f_- \in \Mes(\Ob)$; the function $u$ plays a role of the temperature field. 

The Free Material Design problem for heat conductor may be readily posed:
\begin{equation}
\label{eq:FMD_heat_cond_def}
\FMD \qquad \quad \Cmin = \min \biggl\{ \Comp(\alpha) \ : \ \alpha \in \Mes(\Ob;\mathscr{A}), \ \int \cost_1(\alpha) \leq \Totc \biggr\} \qquad \quad
\end{equation}
where the cost function $c_1$ is the restriction to $\mathscr{A}$ of any norm on the space of symmetric tensors $\Sdd$; for instance $c_1$ may be taken as $\tr$. Below we shall also shortly use the name: \textit{the scalar $\FMD$ problem}.

Upon studying Sections \ref{sec:elasticity_problem}, \ref{sec:FMD_problem}, \ref{sec:FMD_LCP} we may observe that in the main results we did not make use of the structure of the space $\LSdd$ (being isomorphic to a subspace of 4-th order tensors) and neither of the fact that $\Hs$ contained positive semi-definite tensors only. In fact $\LSdd$ could be replaced by any finite dimensional linear space, while $\Hs$ by any convex closed cone $K \subset V$. In other words, the well-posedness of the $(\mathrm{FMD})$ problem stemmed from assumptions (H1)-(H5) alone and the set $\Hs \subset \LSdd$ was chosen merely to stay within natural framework of elasticity. The other choice could be precisely $V = \Sdd$ and $K = \mathscr{A}$. The argumentation for switching from vectorial $u$ to scalar one and from operator $e$ to $\nabla$ runs similarly to the one outlined for Kirchhoff plates and in addition it is again not an issue that the second argument of the function $j_1$ lies in $\Rd$ instead of $\Sdd$ in case of $j$: it could as well be any other finite dimensional linear space $W$. In summary, the conductivity framework presented above is well suited to the theory developed in this paper.

We are now in a position to quickly run through the main results for the scalar $(\mathrm{FMD})$ problem. We start by analogous definitions of mutually polar gauges $\rho_1,\rho_1^0 :  \Rd \rightarrow \R$: for any $v, q \in \Rd$
\begin{equation*}
\frac{1}{p}\bigl(\rho_1(v) \bigr)^p = \max\limits_{\substack{A\in \mathscr{A} \\ c_1(A)\leq 1 }} j_1(A,v), \qquad \frac{1}{p'}\bigl(\rho_1^0(q) \bigr)^{p'} = \min\limits_{\substack{A\in \mathscr{A} \\ c_1(A)\leq 1 }} j^*_1(A,q),
\end{equation*}
while by $\bar{\mathscr{A}}_1(v)$ and $\ubar{\mathscr{A}}_1(q)$ we will denote the sets of, respectively, maximizers and minimizers above. The counterpart of Theorem \ref{thm:problem_P} for the scalar case furnishes the pair of mutually dual problems:
\begin{alignat*}{2}
&\relProb \qquad \qquad Z &&= \max \biggl\{ \int u \, df \ : \ u \in W^{1,\infty}(\Om), \ \rho_1(\nabla u) \leq 1 \ \text{ a.e. in } \Om \biggr\} \qquad \qquad\\
&\dProb\qquad \qquad &&=\min \biggl\{ \int \rho_1^0(\vartheta) \ : \ \vartheta \in \Mes(\Ob;\Rd), \ -\DIV\,  \vartheta = f \biggr\},  \qquad \qquad
\end{alignat*}
where $\vartheta$ plays the role of the \textit{heat flux}. The problem $\relProb$ is already in its relaxed form, i.e. the set of admissible functions $u$ is the closure of the set $\bigl\{ u \in \D(\R) \, : \, \rho_1(\nabla u) \leq 1 \text{ in } \Om \bigr\}$ in the topology of uniform convergence. Recall that respective characterization via vector-valued functions $u \in W^{1,\infty}(\Om;\Rd)$ was not available for the $(\mathrm{FMD})$ problem in elasticity, see the comment below \eqref{eq:relProb}.

Theorem \ref{thm:FMD_LCP} adjusted for the scalar setting states that the conductivity tensor field $\check{\alpha} \in \nolinebreak \Mes(\Ob;\mathscr{A})$ solves the scalar $\FMD$ problem if and only if it is of the form
\begin{equation}
\label{eq:FMD_LCP_scalar}
\check{\alpha} = \check{A} \,\check\mu, \quad \check\mu = \frac{\Totc}{Z}\, \hat{\mu}, \quad \check{A} \in L^\infty_{\check{\mu}}(\Ob;\mathscr{A}) \text{ is any } \check{\mu} \text{-meas. selection of } x \mapsto \ubar{\mathscr{A}}_1\bigl( \hat{q}(x) \bigr)
\end{equation}
where $\hat{\mu} = \rho_1^0(\hat{\theta})$ and $\hat{q} = \frac{d \hat{\theta}}{d \hat{\mu}}$
for some solution $\hat\vartheta$ of the problem $\dProb$ above. Existence of the measurable selection referred to above follows from an adapted version of Lemma  \ref{lem:measurable_selection}.

In the sequel we shall consider the AMD version of the design problem along with the Fourier constitutive law, more precisely
\begin{equation}
\label{eq:scalar_AMD_setting}
\mathscr{A} = \mathcal{S}_+^{d \times d}, \qquad j_1(A,v)= \frac{1}{2} \pairing{A\,v , v}, \qquad c_1(A) = \tr\,A.
\end{equation}
Following the argument in Example \ref{ex:rho_drho_AMD} on the AMD setting in the case of elasticity, for non-zero vectors $v,q \in \Rd$ we arrive at
\begin{equation*}
\rho_1 = \rho_1^0 = \abs{\argu}, \qquad \bar{\mathscr{A}}_1(v) = \left\{ \frac{v}{\abs{v}} \otimes \frac{v}{\abs{v}}  \right\}, \qquad \ubar{\mathscr{A}}_1(q) = \left\{ \frac{q}{\abs{q}} \otimes \frac{q}{\abs{q}}  \right\}
\end{equation*}
with $\abs{\argu}$ being Euclidean norm on $\Rd$. Therefore, owing to \eqref{eq:FMD_LCP_scalar}, the tensor valued measure $\check{\alpha} \in \Mes(\Ob;\mathcal{S}^{d \times d}_+)$ is a solution of the scalar $\FMD$ problem in the AMD setting if and only if
\begin{equation}
\label{eq:alpha_theta}
\check{\alpha} = \frac{\Totc}{Z} \biggl(  \frac{d\hat{\vartheta}}{d\lvert\hat{\vartheta}\rvert} \otimes \frac{d\hat{\vartheta}}{d\lvert\hat{\vartheta}\rvert} \biggr)  \lvert\hat{\vartheta}\rvert
\end{equation} 
for some solution $\hat\vartheta$ of the problem $\dProb$ with $\rho_1^0 = \abs{\argu}$. An important feature of the optimal conductivity field readily follows:

\begin{proposition}
	Let $\check{\alpha}$ be any solution of the scalar $\FMD$ problem in the AMD setting \eqref{eq:scalar_AMD_setting}. Then $\check{\alpha}$ is rank-one, namely $\frac{d \check{\alpha}}{d\abs{\check{\alpha}}}$ is a rank-one matrix $\abs{\check{\alpha}}$-a.e.
\end{proposition}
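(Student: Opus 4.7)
The entire statement is essentially a direct corollary of the representation formula \eqref{eq:alpha_theta}, so the plan is to extract the rank-one structure from that identity and then do a small bookkeeping to pass from a derivative with respect to $\lvert\hat\vartheta\rvert$ to one with respect to $\lvert\check\alpha\rvert$. First, I would take an arbitrary solution $\check\alpha$ of the scalar $\FMD$ problem in the AMD setting \eqref{eq:scalar_AMD_setting} and invoke the scalar version of Theorem \ref{thm:FMD_LCP}, which in the AMD case was made explicit as \eqref{eq:alpha_theta}: there exists an optimal heat flux $\hat\vartheta$ for $\dProb$ such that
$$\check\alpha \;=\; \frac{\Totc}{Z}\,\bigl( \hat q \otimes \hat q \bigr)\,\lvert\hat\vartheta\rvert, \qquad \hat q := \frac{d\hat\vartheta}{d\lvert\hat\vartheta\rvert}, \qquad \lvert \hat q\rvert = 1 \ \ \lvert\hat\vartheta\rvert\text{-a.e.}$$

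The key observation is then purely algebraic: for any non-zero vector $v\in\Rd$, the symmetric matrix $v\otimes v$ has a one-dimensional range (spanned by $v$ itself), so it is of rank one. Applied $\lvert\hat\vartheta\rvert$-a.e.\ to $\hat q$, which is a unit vector on a set of full $\lvert\hat\vartheta\rvert$-measure, this shows that the Radon-Nikodym density $\frac{d\check\alpha}{d\lvert\hat\vartheta\rvert} = \frac{\Totc}{Z}\,\hat q \otimes \hat q$ is rank-one $\lvert\hat\vartheta\rvert$-a.e.

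To reach the formulation of the statement I next need to pass from $\lvert\hat\vartheta\rvert$ to the variation measure $\lvert\check\alpha\rvert$. Using the Frobenius norm on the space of symmetric matrices, a short computation yields $\lvert \hat q \otimes \hat q\rvert = \lvert \hat q\rvert^{2} = 1$ wherever $\lvert\hat q\rvert=1$, hence by the chain rule for variation measures
$$\lvert\check\alpha\rvert \;=\; \frac{\Totc}{Z}\,\lvert \hat q \otimes \hat q\rvert\,\lvert\hat\vartheta\rvert \;=\; \frac{\Totc}{Z}\,\lvert\hat\vartheta\rvert.$$
In particular $\lvert\check\alpha\rvert$ and $\lvert\hat\vartheta\rvert$ are mutually absolutely continuous, and cancelling the scalar factor gives
$$\frac{d\check\alpha}{d\lvert\check\alpha\rvert} \;=\; \hat q \otimes \hat q,$$
which is a rank-one symmetric matrix $\lvert\check\alpha\rvert$-almost everywhere.

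I do not expect any genuine obstacle: once the representation \eqref{eq:alpha_theta} is in hand, the rank-one property is forced by the tensor-square structure of the density, and the only subtlety is the routine normalization above. The one place where a reader might want care is verifying that the argument is independent of the particular optimal $\hat\vartheta$ used to represent $\check\alpha$; but since \emph{every} optimal $\check\alpha$ admits such a representation and the rank of $\frac{d\check\alpha}{d\lvert\check\alpha\rvert}$ is intrinsic to $\check\alpha$, this poses no problem.
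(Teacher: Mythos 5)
Your proposal is correct and follows exactly the route the paper intends: the paper states the proposition as a fact that "readily follows" from the characterization \eqref{eq:alpha_theta}, and your argument simply spells out the two steps implicit there, namely that the density $\hat q\otimes\hat q$ with $\lvert\hat q\rvert=1$ is rank one, and that $\lvert\check\alpha\rvert$ differs from $\lvert\hat\vartheta\rvert$ only by the positive constant $\Totc/Z$, so the rank-one property transfers to $\frac{d\check\alpha}{d\lvert\check\alpha\rvert}$. The normalization bookkeeping you add is harmless and, if anything, makes the statement's independence of the chosen matrix norm explicit.
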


\begin{remark}
	Up to a multiplicative constant, the only isotropic gauge function $\rho$ on $\Rd$ is the Euclidean norm $\abs{\argu}$. We thus find that every "isotropic scalar $(\mathrm{FMD})$ problem" reduces to the pair $\relProb$, $\dProb$ above with $\rho =b \abs{\argu}$, $b>0$ (note that no similar conclusion was true for the vectorial $(\mathrm{FMD})$ problem, see Examples \ref{ex:rho_drho_FibMD}, \ref{ex:rho_drho_FibMD_plus_minus}, \ref{ex:rho_drho_IMD} where all the gauges $\rho$ are isotropic). For instance in \eqref{eq:scalar_AMD_setting} we could instead  take $\mathscr{A} = \mathscr{A}_{iso}$, i.e. the set of all isotropic conductivity tensors while of course: $A \in \mathscr{A}_{iso}$ if and only if $A = a \,\mathrm{I}$, for $a \geq 0$. Then the scalar $(\mathrm{FMD})$ problem is equivalent to the Mass Optimization Problem from \cite{bouchitte2001} and, since in $\Rd$ space $c_1(\mathrm{I})=\tr \,\mathrm{I} = d$, we have for any $v\in \Rd$
	\begin{equation*}
	\frac{1}{2}\bigl(\rho_1(v)\bigr)^2 =  \max\limits_{\substack{A\in \mathscr{A}_{iso} \\ c_1(A)\leq 1 }} j_1(A,v) =  \max\limits_{\substack{a \geq 0 \\ d \cdot a \leq 1 }} \ \frac{1}{2}\pairing{a \, \mathrm{I}, v \otimes v} = \frac{1}{d} \ \frac{1}{2} \abs{v}^2
	\end{equation*}
	yielding $\rho_1(v) = \frac{1}{\sqrt{d}} \abs{v}$ and $\rho_1^0(q) = \sqrt{d}\, \abs{q}$. In dimension $d=2$, once $f$ charges the boundary $\partial\Om$ only, the problem $\dProb$ can be reformulated as the Least Gradient Problem, see \cite{gorny2017}. Upon acknowledging this equivalence, a study of $\dProb$ for anisotropic functions $\rho_1^0$ can be found in \cite{gorny2018planar}. 
\end{remark}

In the remainder of this section we shall assume that $\Om$ is convex, which, upon recalling that $\rho_1=\abs{\argu}$, allows to rewrite the condition $\rho_1(\nabla u) \leq 1$ a.e. in $\Om$ by a constraint on the Lipschitz constant: $\mathrm{lip}(u) \leq  1$. Then the Rubinstein-Kantorovich theorem combined with a duality argument allows to replace the problem $\dProb$ with the \textit{Optimal Transport Problem} $(\mathrm{OTP})$, see \cite{villani2003topics}: 
\begin{alignat*}{2}
Z &= \max \biggl\{ \int u \, d(f_+-f_-) \ : \ u \in C(\Ob), \ \ u(x)-u(y) \leq \abs{x-y} \ \  \forall\, (x,y)\in \Ob \times \Ob \biggr\}\\
&=\min \biggl\{ \int_{\Ob \times \Ob} \abs{x-y}\, \gamma(dx dy)  \ : \ \gamma \in \Mes_+(\Ob \times \Ob), \
\begin{array}{rl}
\pi_{\#,1} \gamma =& f_+,\\
\pi_{\#,2} \gamma =& f_- 
\end{array}
\biggr\}; \qquad \quad (\mathrm{OTP})
\end{alignat*}
in $(\mathrm{OTP})$ we enforce the left and the  right marginals of the \textit{transportation plan} $\gamma$ to be $f_+$ and $f_-$ respectively. Whenever $\hat\gamma$ is a solution of $(\mathrm{OTP})$ the measure $\hat{\vartheta}$ defined via acting on $v \in  C(\Ob;\Rd)$ by
\begin{equation}
\label{eq:theta_gamma}
\bigl(v;\hat{\vartheta} \bigr) := \int_{\Ob \times \Ob} \left( \int_{[x,y]} \pairing{v(z),\frac{x-y}{\abs{x-y}}} \, \Ha^1(dz) \right)  \hat\gamma(dx dy) 
\end{equation}
solves the problem $\dProb$ with $\rho_1^0=\abs{\argu}$. The passage from the problem $\Prob$ to the Optimal Transport Problem along with validation of the formula \eqref{eq:theta_gamma} may be found in \cite{bouchitte2001}. Upon plugging a solution $\hat\vartheta$ of the form \eqref{eq:theta_gamma} into formula \eqref{eq:alpha_theta}, however, it is not clear whether $\check{\alpha}$ enjoys a characterization of the type \eqref{eq:theta_gamma}. We conclude the paper with a result showing that there indeed exists an optimal tensor field $\check{\alpha}$ that decomposes to segments on which $\check{\alpha}$ is uni-axial (rank-one):
\begin{theorem}
	For a convex bounded design domain $\Om\subset \Rd$ let $\hat{\gamma} \in \Mes_+(\Ob \times \Ob)$ denote any solution of $(\mathrm{OTP})$; then the conductivity tensor field $\check{\alpha} \in \Mes(\Ob;\mathcal{S}^{d \times d}_+)$ defined as a linear functional for any $M \in C(\Ob;\Sdd)$ by
	\begin{equation}
	\label{eq:alpha_gamma}
	\bigl(M;\check{\alpha}\bigr) = \frac{\Totc}{Z}\, \int_{\Ob \times \Ob} \left( \int_{[x,y]} \pairing{M(z),\frac{x-y}{\abs{x-y}} \otimes \frac{x-y}{\abs{x-y}}} \, \Ha^1(dz) \right)  \hat\gamma(dx dy) 
	\end{equation}
	is a solution of the scalar $(\mathrm{FMD})$ problem in the AMD setting \eqref{eq:scalar_AMD_setting}.
\end{theorem}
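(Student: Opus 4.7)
The plan is to combine the characterisation \eqref{eq:FMD_LCP_scalar} of scalar $\FMD$-optimal conductivity tensor fields with the Bouchitt\'{e}-Buttazzo representation \eqref{eq:theta_gamma} of heat fluxes by transport plans. Given $\hat\gamma$, I set $\hat\vartheta$ through \eqref{eq:theta_gamma}; by the result quoted from \cite{bouchitte2001}, $\hat\vartheta$ solves $\dProb$ for $\rho_1^0=\abs{\argu}$. Write $\hat\mu:=\abs{\hat\vartheta}$ and $\hat q:=d\hat\vartheta/d\hat\mu$; then $\abs{\hat q}=1$ $\hat\mu$-a.e. Because in the AMD setting \eqref{eq:scalar_AMD_setting} one has $\ubar{\mathscr{A}}_1(q)=\bigl\{\tfrac{q}{\abs q}\otimes\tfrac{q}{\abs q}\bigr\}$ for every non-zero $q$, the only admissible measurable selection in the scalar version of Theorem~\ref{thm:FMD_LCP} is $\check A = \hat q \otimes \hat q$, so \eqref{eq:FMD_LCP_scalar} forces the optimal tensor field to be
\begin{equation*}
\check{\alpha}_{0}:=\frac{\Totc}{Z}\,(\hat q\otimes \hat q)\,\hat\mu.
\end{equation*}
The theorem is thus reduced to showing that the functional $\check\alpha$ of \eqref{eq:alpha_gamma} agrees with $\check\alpha_{0}$ on every $M\in C(\Ob;\Sdd)$.

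The crucial step is the identification of $\hat\mu$ with the \emph{transport density}
\begin{equation*}
\mu_{\hat\gamma}(B):=\int_{\Ob\times\Ob}\Ha^1\bigl(B\cap[x,y]\bigr)\,\hat\gamma(dxdy),\qquad B\subset\Rd \text{ Borel},
\end{equation*}
together with the $\hat\mu$-a.e.\ identity $\hat q(z)=e(x,y):=(x-y)/\abs{x-y}$ valid for every $(x,y)\in\mathrm{spt}(\hat\gamma)$ meeting $z$ in its open segment. The inequality $\hat\mu\leq\mu_{\hat\gamma}$ is immediate: testing \eqref{eq:theta_gamma} against $v\in C_c(\Ob;\Rd)$ with $\abs v\leq 1$ and $\mathrm{spt}(v)\subset U$ open yields $(v;\hat\vartheta)\leq\mu_{\hat\gamma}(U)$ because $\abs{\pairing{v(z),e(x,y)}}\leq 1$, and Radon regularity extends this to Borel sets. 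Equality of total masses is forced by $\hat\vartheta$ being a minimiser of $\dProb$ and $\hat\gamma$ solving $(\mathrm{OTP})$, yielding $\hat\mu(\Ob)=Z=\mu_{\hat\gamma}(\Ob)$ and hence $\hat\mu=\mu_{\hat\gamma}$. The directional identification of $\hat q$ is then obtained by equating the two expressions
\begin{equation*}
\int\pairing{v,\hat q}\,d\hat\mu \; =\; (v;\hat\vartheta)\; =\; \int_{\Ob\times\Ob}\!\!\int_{[x,y]}\pairing{v(z),e(x,y)}\,\Ha^1(dz)\,\hat\gamma(dxdy)
\end{equation*}
for every $v\in C(\Ob;\Rd)$, disintegrating the right-hand side along the projection $(x,y,z)\mapsto z$: $\hat q(z)$ must coincide with the conditional $\mu_{\hat\gamma}$-average of the unit vectors $e(x,y)$ over segments through $z$, which, in view of $\abs{\hat q(z)}=\abs{e(x,y)}=1$, can only be realised when the conditional distribution concentrates on a single direction.

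With this non-cancellation structure at hand, the inner integrand $\pairing{M(z),e(x,y)\otimes e(x,y)}$ in \eqref{eq:alpha_gamma} may be replaced $\hat\gamma$-a.e.\ by $\pairing{M(z),\hat q(z)\otimes \hat q(z)}$, and the same disintegration yields
\begin{equation*}
(M;\check\alpha)=\frac{\Totc}{Z}\int_\Om \pairing{M(z),\hat q(z)\otimes \hat q(z)}\,d\hat\mu(z)=(M;\check\alpha_{0}),
\end{equation*}
which proves $\check\alpha=\check\alpha_{0}$ and hence the optimality of $\check\alpha$ in the scalar $(\mathrm{FMD})$ problem. The principal obstacle is precisely the measure-wise equality $\hat\mu=\mu_{\hat\gamma}$ together with the $\hat\mu$-a.e.\ directional agreement of $\hat q$ with the transport rays: although the matching-of-total-masses argument above is compact, its refinement into almost-everywhere directional coincidence is the classical no-cancellation phenomenon of optimal transport with Euclidean cost, built on $c$-cyclical monotonicity of $\mathrm{spt}(\hat\gamma)$, and must be imported from the transport-density literature (Evans-Gangbo, Ambrosio) since it lies outside the material assembled earlier in the paper.
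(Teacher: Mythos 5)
Your proof is correct in substance, but it follows a genuinely different route from the paper's. The paper never identifies $\check{\alpha}$ with the canonical optimal field: it simply verifies feasibility, computing $\int c_1(\check{\alpha})\leq \Totc$ via the dual representation of the convex functional of a measure and the identity $g^0(e\otimes e)=1$ for unit vectors $e$, and then bounds the compliance \eqref{eq:compliance_def_cond} directly --- replacing $\nabla u$ by an arbitrary vector field $v$, inserting the explicit formulas \eqref{eq:theta_gamma} and \eqref{eq:alpha_gamma}, and using the elementary pointwise bound $\sup_{t}\{t-\tfrac{\Totc}{2Z}t^2\}=\tfrac{Z}{2\Totc}$ to get $\Comp(\check{\alpha})\leq \tfrac{Z^2}{2\Totc}=\Cmin$. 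This is shorter, needs no disintegration, and sidesteps entirely the structure of the transport density. You instead reduce to the characterization \eqref{eq:FMD_LCP_scalar} and must prove that $\abs{\hat\vartheta}$ coincides with the transport density $\mu_{\hat\gamma}$ and that $\hat q$ agrees $\hat\mu$-a.e.\ with the ray directions; what this buys is strictly more information (the explicit identification $\check{\alpha}=\tfrac{\Totc}{Z}(\hat q\otimes\hat q)\abs{\hat\vartheta}$ and the no-cancellation of ray directions, neither of which the paper's argument yields), at the cost of a disintegration argument that must be carried out with some care. One remark: your closing hedge that the directional coincidence ``must be imported'' from the Evans--Gangbo/Ambrosio literature undersells your own argument --- the chain $\abs{\hat\vartheta}\leq\mu_{\hat\gamma}$, equality of total masses (both equal $Z$ by optimality of $\hat\vartheta$ and $\hat\gamma$), hence $\abs{\hat\vartheta}=\mu_{\hat\gamma}$, combined with the fact that a conditional average of Euclidean unit vectors has norm one only if the conditional law is a Dirac, is already a complete and self-contained proof of the needed a.e.\ directional agreement; no appeal to $c$-cyclical monotonicity is required.
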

\begin{proof}
	First we must verify whether $\check{\alpha}$ is a competitor for $(\mathrm{FMD})$. The functions on the space of symmetric tensors $g,g^0:\Sdd \rightarrow \R$ given by formulas $g(M) = \max_i \abs{\lambda_i(M)}$ and $g^0(A) = \sum_i \abs{\lambda_i(A)}$ ($\lambda_i$ are the eigenvalues) are mutually dual norms. We note that for any $A$ that is positive semi-definite we have $g^0(A) = \tr\,A = c_1(A)$. Based on the paper \cite{bouchitte1988} we discover
	\begin{align*}
	&\int c_1(\check{\alpha}) = \int g^0(\check{\alpha}) = \sup\biggl\{\int \pairing{M,\check{\alpha}} \ : \ M \in C(\Ob;\Sdd), \ g(M)\leq 1 \text{ in } \Omega   \biggr\} \\
	\leq&   \frac{\Totc}{Z}\, \int_{\Ob \times \Ob} \left( \int_{[x,y]} g^0 \!\left(\frac{x-y}{\abs{x-y}} \otimes \frac{x-y}{\abs{x-y}} \right) \, \Ha^1(dz) \right)  \hat\gamma(dx dy) =  \frac{\Totc}{Z}\,\int_{\Ob \times \Ob} \abs{x-y}\,  \hat\gamma(dx dy) = \Totc,
	\end{align*}
	where we used the fact that $\hat{\gamma}$ solves $(\mathrm{OTP})$; the feasibility of $\check{\alpha}$ in $(\mathrm{FMD})$ is validated.
	
	Let $\hat{\vartheta} \in \Mes(\Ob;\Rd)$ be given by the formula \eqref{eq:theta_gamma}; since $\hat{\vartheta}$ solves the problem $\dProb$ in particular it satisfies $-\DIV \, \hat{\vartheta} = f$ or equivalently $\int u \,df = \int \langle\nabla u, \hat{\vartheta} \rangle$ for any $\,u \in \D(\Rd)$.
	The compliance $\Comp(\check{\alpha})$ in \eqref{eq:compliance_def_cond} can be thus rewritten and then estimated as follows 
	\begin{align*}
	&\Comp(\check{\alpha}) = \sup \left\{ \int \langle\nabla u, \hat{\vartheta}\rangle - \frac{1}{2} \int \pairing{\check{\alpha},\nabla u \otimes \nabla u}
	\ : \ u \in \D(\R^d) \right\} \\
	\leq & \sup \left\{ \int \langle v, \hat{\vartheta}\rangle - \frac{1}{2} \int \pairing{\check{\alpha},v \otimes v} \ : \ v \in \bigl(\D(\R^d) \bigr)^d \right\}\\
	= &  \sup \left\{ \int_{\Ob \times \Ob}  \int_{[x,y]} \left( \pairing{v(z),\frac{x-y}{\abs{x-y}}} -\frac{\Totc}{2Z}\biggl(\pairing{v(z),\frac{x-y}{\abs{x-y}}}\biggr)^2 \right)\!\Ha^1(dz) \,  \hat\gamma(dx dy)  :  v \in \bigl(\D(\R^d) \bigr)^d \right\}\\
	\leq & \int_{\Ob \times \Ob} \left( \int_{[x,y]} \frac{Z}{2\Totc} \,  \Ha^1(dz) \right)  \hat\gamma(dx dy) = \frac{Z}{2\Totc} \,\int_{\Ob \times \Ob} \abs{x-y}\,  \hat\gamma(dx dy) = \frac{Z^2}{2\Totc},
	\end{align*}
	where in the last inequality we substituted $t:= \langle v(z),\frac{x-y}{\abs{x-y}} \rangle$ and used the fact that $\sup_{t\in \R} \bigl\{t-  \frac{\Totc}{2 Z}\, t^2\bigr\} = \frac{Z}{2\Totc}$. The last term in the chain above: $\frac{Z^2}{2\Totc}$ is precisely the value of minimum compliance $\Cmin$ (see Theorem \ref{thm:problem_P} for the vectorial case) proving $\check{\alpha}$ is a solution of the scalar $(\mathrm{FMD})$ problem.
\end{proof}

	\bigskip
	\footnotesize
	\noindent\textbf{Acknowledgments.}
	The authors would like to thank the National Science Centre (Poland) for the financial support: the first author would like to acknowledge the Research Grant no 2015/19/N/ST8/00474 (Poland), entitled "Topology optimization of thin elastic shells - a method synthesizing shape and free material design"; the second author would like to acknowledge the Research Grant no 2019/33/B/ST8/00325 entitled "Merging the optimum design problems of structural topology and of the optimal choice of material characteristics. The theoretical foundations and numerical methods".

\end{document}